\documentclass[12pt]{article}
\usepackage{amsmath, amsfonts, amssymb, amsthm, a4wide}
\usepackage[arrow]{xy}
\numberwithin{equation}{section}
\newcommand{\pa}[1]{\overline{#1}}
\newcommand{\gl}{\mathfrak{gl}}
\title{Parabolic presentations of the super Yangian $Y(\gl_{M|N})$}
\author{Yung-Ning Peng\\
\normalsize{yp9s@virginia.edu}\\
\normalsize{Department of Mathematics}\\
\normalsize{University of Virginia}}\begin{scriptsize}\begin{footnotesize}\end{footnotesize}\end{scriptsize}
\setlength{\parindent}{20pt}
\setlength{\parskip}{0.8ex plus 0.5ex minus 0.2ex}
\addtolength{\textheight}{-12 pt}
\addtolength{\textwidth}{15.3 pt}
\begin{document}
\date{}
\maketitle
\newtheorem{lemma}{Lemma}[section]
\newtheorem{corollary}[lemma]{Corollary}
\newtheorem{proposition}[lemma]{Proposition}
\newtheorem{definition}{Definition}[section]
\newtheorem{theorem}{Theorem}
\newtheorem{remark}{Remark}[section]

\maketitle
\abstract Associated to a composition of M and a composition of N, a new presentation of the super Yangian of the general linear Lie superalgebra $Y(\gl_{M|N})$ is obtained.

\tableofcontents

\section{Introduction}
For each simple finite-dimensional Lie algebra $\mathfrak{g}$ over $\mathbb{C}$, the associated Yangian $Y(\mathfrak{g})$ was defined by Drinfeld in \cite{D1} as a deformation of the universal enveloping algebra $U(\mathfrak{g}[x])$ for the polynomial current Lie algebra $\mathfrak{g}[x]$. The Yangians form a family of quantum groups which give rise to rational solutions of the Yang-Baxter equation originating from statistical mechanics; see \cite{CP}. A Yangian admits PBW basis, triangular decomposition and Hopf algebra structure. The Yangian $Y(\gl_N)$  of the reductive Lie algebra $\gl_N$ was earlier considered in \cite{TF}. It is an associative algebra whose defining relations can be written in a specific matrix form, which is called the RTT relation; see e.g. \cite{FRT} and \cite{MNO}. The structures and representation theory of $Y(\gl_N)$ have been studied by many people; see e.g. \cite{KRS}, \cite{Ta}, \cite{MNO},  and \cite{Mo}. In \cite{D2}, Drinfeld gave a new presentation for Yangians and it in particular can be used to define the analog of the Cartan subalgebra and the Borel subalgebra in $Y(\gl_N)$.

In \cite{BK1}, Brundan and Kleshchev found a parabolic presentation for $Y(\gl_N)$ associated to each composition $\lambda$ of N. Roughly speaking, the new presentation corresponds to a block matrix decomposition of $\gl_N$ of shape $\lambda$. In the special case when $\lambda=(1,1,\ldots,1)$, the corresponding parabolic presentation is just a variation of Drinfeld's; see \cite[Remark 5.12]{BK1}. On the other extreme case when $\lambda=(N)$, the corresponding parabolic presentation is exactly the original RTT presentation. The parabolic presentation allows Brundan and Kleshchev to further define the standard Levi and parabolic subalgebras of $Y(\gl_N)$, and thus to obtain a Levi decomposition of $Y(\gl_N)$. The parabolic presentations have played a crucial role in their subsequent work \cite{BK2}, in which they derived generators and relations for the finite $W$-algebras.

The main goal of this article is to obtain the superalgebra generalization of the parabolic presentations of \cite{BK1} for the super Yangian $Y(\gl_{M|N})$. The super Yangian of the general linear Lie superalgebra $Y(\gl_{M|N})$ was introduced by Nazarov in \cite{Na}, and it shares many properties with the usual Yangian, such as the PBW theorem, the RTT relation and the Hopf algebra structure. The results of this article will be used in a sequel on the connection between $Y(\gl_{M|N})$ and the super $W$-algebras.

Let $\lambda$ be a composition of $M$ and $\nu$ be a composition of $N$. We first define some distinguished elements in $Y(\gl_{M|N})$, denoted by D's, E's and F's, by Gauss decomposition and quasideterminants. We show that these new elements form a set of generators for $Y(\gl_{M|N})$. The next step is to find the relations among the new generators, where the signs arising from the $\mathbb{Z}_2$-grading are involved here. However, since the $(\lambda |\nu)$-block decomposition of $\gl_{M|N}$ respects the $\mathbb{Z}_2$-grading of the superalgebra, the signs in the relations are determined by the block positions only. It is known (cf. \cite{BK1}) that if the elements are from two different blocks and the blocks are not ``close", then they commute. This phenomenon remains to be true in our super Yangian setting and it dramatically reduces the number of the nontrivial relations. Hence we only have to focus on the commutation relations of the elements in the same block or when their block-positions are ``close". Let $m$ be the number of parts of $\lambda$ and $n$ be the number of parts of $\nu$. Then the first new non-trivial case will be $m=n=1$, and the new ones will be $m=2$, $n=1$ and $m=1$, $n=2$ (see Section 4). In these special cases, we determined various relations among D's, E's  and F's by direct computation.

Next, we make use of the shift map $\psi_k$ and the swap map $\zeta_{M|N}$ between super Yangians (see Section 4). These maps allow us to transfer the relations in the special cases with $m+n\leq 3$ to relations in $Y(\gl_{M|N})$ in the setting of general compositions $\lambda$ and $\nu$. Finally we show that we have found enough relations for our new presentation. As a consequence, we obtain the PBW bases for several distinguished subalgebras of $Y(\gl_{M|N})$.

The parabolic presentation in the extreme case when all parts of $\lambda$ and $\nu$ are 1 was found by Gow in \cite{Go}, who used the presentation to define the super Yangian of the special linear superalgebra $Y(\mathfrak{sl}_{N|N})$ which was missing in the literature and to determine the generators of the center of $Y(\gl_{M|N})$. However, there are non-trivial relations that can not be observed in this special case and nevertheless play an important role in our paper (see Remark 7.1 below).

We organize this article in the following manner. In Section 2, we recall the definition and some properties of $Y(\gl_{M|N})$. In Section 3, we introduce the generating elements in our parabolic presentation by means of Gauss decomposition. In Section 4, we define some maps between super Yangians in order to reduce the general case to special cases when $m+n\leq 3$, and Section 5 and 6 are devoted to these special cases. Our main theorem in the general case is formulated in Section 7 and its proof is completed in Section 8.

\section{Properties of the super Yangian $Y(\gl_{M|N})$}
Most of the theorems and lemmas in Sections 2 to Section 4 are generalizations of the counterparts for $Y(\gl_N)$ in \cite{MNO} or \cite{BK1}.

The super Yangian $Y (\gl_{M|N})$, which was introduced in \cite{Na}, is the associative $\mathbb{Z}_2$-graded algebra (i.e., superalgebra) over $\mathbb{C}$ with generators
\[
\left\lbrace t_{ij}^{(r)}\,| \; 1\le i,j \le M+N; r\ge 0\right\rbrace,
\] where $t_{ij}^{(0)}:=\delta_{ij}$
and defining relations
\begin{equation}\label{yangiandef}
[t_{ij}^{(r)}, t_{hk}^{(s)}] = (-1)^{\pa{i}\,\pa{j} + \pa{i}\,\pa{h} + \pa{j}\,\pa{h}}
\sum_{t=0}^{\mathrm{min}(r,s) -1} \Big( t_{hj}^{(t)} t_{ik}^{(r+s-1-t)} - t_{hj}^{(r+s-1-t)}t_{ik}^{(t)} \Big),
\end{equation}
where $\pa{i} = 0$ if $i\le M$, $\pa{i}=1$ if $i \ge M+1$, and the bracket is understood as a supercommutator. For $r>0$, the element $t_{ij}^{(r)}$ is defined to be an odd element if $\pa{i}+\pa{j}=\pa{1}$ and an even element if $\pa{i}+\pa{j}=\pa{0}$.

\begin{remark} When N=0, the super Yangian $Y(\gl_{M|0})$ is naturally isomorphic to the usual Yangian $Y(\gl_M)$; when M=0, the super Yangian $Y(\gl_{0|N})$ is also isomorphic to the usual Yangian $Y(\gl_N)$ by the map $\zeta_{0|N}$, see Section 4.
\end{remark}

We define the formal power series to be the generating series (with non-positive powers of a variable $u$) of the generators:
\[
t_{ij} (u) = \delta_{ij} + t_{ij}^{(1)} u^{-1} + t_{ij}^{(2)}u^{-2} + t_{ij}^{(3)}u^{-3}+\ldots.
\]
Also define
\[
T(u):=\sum_{i,j=1}^{M+N}t_{ij}(u)\otimes E_{ij}(-1)^{\pa{j}(\pa{i}+1)}\in Y(\gl_{M|N})[[u^{-1}]]\otimes \text{End } \mathbb{C}^{M|N},
\]
where $E_{ij}$ is the standard elementary matrix. The extra sign ensures that the product of matrices can be calculated in the usual manner. We may also think $T(u)$ as an element in $Mat_{M+N}\Big(Y(\gl_{M|N})[[u^{-1}]]\Big)$, the set of (M+N)$\times$(M+N) matrices with entries in $Y(\gl_{M|N})[[u^{-1}]]$.

We may also define the super Yangian $Y(\gl_{M|N})$ by the \textit{RTT relation}:
\begin{equation}\label{RTT}
R(u-v)T_1(u)T_2(v)=T_2(v)T_1(u)R(u-v),
\end{equation}
where \[T_1(u)=T(u)\otimes Id_{M+N},\quad \quad T_2(v)=Id_{M+N}\otimes T(v), \quad R(u-v)=1-\dfrac{P_{12}}{(u-v)},\]
\[ \text{and}\qquad P_{12}=\sum_{i,j=1}^{M+N}(-1)^{\pa{j}}E_{ij}\otimes E_{ji} \quad \text{is the permutation matrix.}\]
The equality is in $Mat_{M+N}\otimes Mat_{M+N}\otimes Y(\gl_{M|N})((u^{-1},v^{-1}))$, which means the localization of $Mat_{M+N}\otimes Mat_{M+N}\otimes Y(\gl_{M|N})[[u^{-1},v^{-1}]]$ at the multiplicative set consisting of the non-zero elements of $\mathbb{C}[[u^{-1},v^{-1}]]$.

\begin{remark}
Note that we have $(u-v)^{-1}$ in the matrix $R(u-v)$. Hence we have to replace $Y(\gl_{M|N})[[u^{-1},v^{-1}]]$ by a certain extension containing $(u-v)^{-1}$.
\end{remark}

Equating the coefficients of $E_{ij}\otimes E_{hk}$ on both sides of (2.2), we have the following equivalent defining relations in terms of the generating series:
\begin{equation}\label{ydefs}
[t_{ij}(u),t_{hk}(v)]=\frac{(-1)^{\pa{i}\,\pa{j}+\pa{i}\,\pa{h}+\pa{j}\,\pa{h}}}{(u-v)}
\Big(t_{hj}(u)t_{ik}(v)-t_{hj}(v)t_{ik}(u)\Big).
\end{equation}
Note that the matrix $T(u)$ is invertible, hence one may define the entries of its inverse by
\[
\big(T(u)\big)^{-1}:=\big(t_{ij}^{\prime}(u)\big)_{i,j=1}^{M+N}.
\]
Multiplying $T_2(v)^{-1}$ on both sides of (2.2) and use the same method getting (2.3), we have yet another relation:
\begin{equation}\label{usefull}
[t_{ij}(u),t'_{hk}(v)]
=\frac{(-1)^{\pa{i}\,\pa{j}+\pa{i}\,\pa{h}+\pa{j}\,\pa{h}}}{(u-v)}\Big(\delta_{h,j}\sum_{l=1}^{M+N}
t_{il}(u)t'_{lk}(v)-\delta_{i,k}\sum_{l=1}^{M+N}t'_{hl}(v)t_{lj}(u)\Big)  .
\end{equation}
As an easy consequence of (\ref{usefull}), we know that for all $r$ and $s$, if $i\neq k$ and $j\neq h$, then $t_{ij}^{(r)}$ and $t_{hk}^{\prime(s)}$ supercommute.
The following is the PBW basis theorem for $Y(\gl_{M|N})$.
\begin{proposition}\cite[Theorem 1]{Go}\label{PBWSY}
The set of all monomials in the elements
\[
\left\lbrace t_{ij}^{(r)}| 1\leq i,j\leq M+N, r\geq 1\right\rbrace
\] taken in some fixed order (containing no second or higher order powers of the odd generators) forms a basis for $Y(\gl_{M|N})$.
\end{proposition}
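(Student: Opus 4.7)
The plan is to follow the standard filtration argument used for the ordinary Yangian, adapted to track the Koszul signs arising from the $\mathbb{Z}_2$-grading; this is in essence Nazarov's original strategy, refined in \cite{Go}. Equip $Y(\gl_{M|N})$ with the filtration determined by $\deg t_{ij}^{(r)} = r$, extended multiplicatively. Inspecting the right-hand side of (\ref{yangiandef}), every summand $t_{hj}^{(t)} t_{ik}^{(r+s-1-t)}$ has total degree exactly $r+s-1$, strictly less than the bound $r+s$ forced on the left-hand side. Consequently the associated graded superalgebra $\mathrm{gr}\, Y(\gl_{M|N})$ is supercommutative.

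The spanning assertion then follows from a straightening induction on filtration degree: relation (\ref{yangiandef}) rewrites the supercommutator of any two generators as an element of strictly smaller filtration, so any monomial equals a linear combination of monomials in the fixed order plus a lower-filtration error, and the induction closes. Moreover, for each odd generator $t_{ij}^{(r)}$ (with $\pa{i} + \pa{j} = \pa{1}$), its symbol is an odd element of a supercommutative superalgebra and so squares to zero there; hence $(t_{ij}^{(r)})^2$ itself already lies in strictly lower filtration and can be rewritten inductively without using any squared odd generator, which yields the refinement stated in the proposition.

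For linear independence, let $S$ be the supersymmetric algebra freely generated by symbols $x_{ij}^{(r)}$ of parity $\pa{i}+\pa{j}$, for $1 \leq i,j \leq M+N$ and $r \geq 1$. The spanning step produces a surjective homomorphism $\Phi \colon S \twoheadrightarrow \mathrm{gr}\, Y(\gl_{M|N})$ sending $x_{ij}^{(r)}$ to the symbol of $t_{ij}^{(r)}$, and the proposition is equivalent to injectivity of $\Phi$. This injectivity is the main obstacle. A standard strategy (cf.\ \cite{Na}) is to construct, for each $L \geq 1$ and each list of distinct scalars $a_1, \ldots, a_L \in \mathbb{C}$, an evaluation-type homomorphism $Y(\gl_{M|N}) \to U(\gl_{M|N})^{\otimes L}$ by composing iterated coproducts with evaluation at the $a_i$, and then to run a Vandermonde argument in the parameters $a_i$, combined with the classical PBW theorem for $U(\gl_{M|N})$, to force any hypothetical nontrivial relation among ordered monomials of bounded filtration degree to be trivial. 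The delicate point throughout is the careful bookkeeping of the Koszul signs from the $\mathbb{Z}_2$-grading, which interact nontrivially with the evaluation action on $(\mathbb{C}^{M|N})^{\otimes L}$.
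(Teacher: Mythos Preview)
The paper does not prove this proposition; it is quoted verbatim as \cite[Theorem 1]{Go} and used as input. So there is no proof in the paper to compare against.

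Your outline is the standard one and matches the strategy in \cite{Na} and \cite{Go}: the filtration by $\deg t_{ij}^{(r)}=r$ makes the associated graded supercommutative, spanning follows by straightening, and linear independence is established by mapping to $U(\gl_{M|N})^{\otimes L}$ via iterated comultiplication and evaluation. As written, however, your argument is a sketch rather than a proof: the spanning step is fine, but for linear independence you only \emph{name} the strategy (evaluation homomorphisms, Vandermonde, PBW for $U(\gl_{M|N})$) without carrying out any of it. The sentence ``This injectivity is the main obstacle'' followed by ``A standard strategy\ldots is to construct\ldots'' and ``The delicate point throughout is the careful bookkeeping of the Koszul signs'' is an honest description of where the work lies, but it is not the work itself. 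If you intend this as a proof rather than a reference to the literature, you must actually exhibit the evaluation map, verify it is a homomorphism (this is where the signs enter), and run the Vandermonde/leading-term argument explicitly. Otherwise, citing \cite{Go} as the paper does is the appropriate move.
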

We have the $loop$ $filtration$ on $Y(\gl_{M|N})$
\begin{equation}\label{filt2}\notag
L_0 Y(\gl_{M|N}) \subseteq L_1 Y(\gl_{M|N}) \subseteq L_2 Y(\gl_{M|N}) \subseteq \cdots
\end{equation}
defined by setting deg $t_{ij}^{(r)}=r-1$ for each $r\geq 1$ and $L_kY(\gl_{M|N})$ is the span of all monomials of the form $t_{i_1j_1}^{(r_1)}t_{i_2j_2}^{(r_2)}\cdots t_{i_sj_s}^{(r_s)}$ with total degree $\sum_{i=1}^s(r_i-1)\leq k$. We denote the associated graded algebra by $gr^LY(\gl_{M|N}).$

Let $\gl_{M|N}[t]$ denote the loop superalgebra $\gl_{M|N}\otimes \mathbb{C}[t]$ with the standard basis $\lbrace E_{ij}t^r \,|\, 1\leq i,j\leq M+N, r\geq 0\rbrace$ and $U(\gl_{M|N}[t])$ denote its universal enveloping algebra. By the PBW theorem for $Y(\gl_{M|N})$, we have the following corollary.

\begin{corollary}\cite[Corollary 1]{Go}\label{Yloop}
The graded algebra $gr^LY(\gl_{M|N})$ is isomorphic to the universal enveloping algebra $U(\gl_{M|N}[t])$ by the map
\begin{center}
$gr^LY(\gl_{M|N})\rightarrow U(\gl_{M|N}[t])$\\
$gr^L_{r-1}t_{ij}^{(r)}\mapsto (-1)^{\pa{i}}E_{ij}t^{r-1}.$
\end{center}
\end{corollary}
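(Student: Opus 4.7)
My plan is to construct the inverse homomorphism
\[
\Phi\colon U(\gl_{M|N}[t])\longrightarrow gr^L Y(\gl_{M|N}),\qquad E_{ij}t^{r-1}\longmapsto (-1)^{\pa{i}}\, gr^L_{r-1}t_{ij}^{(r)},
\]
and then to invert it using the PBW theorems on both sides. For brevity write $\bar t_{ij}^{(r)}:=gr^L_{r-1}t_{ij}^{(r)}$.

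First, I would extract the top-degree component of the defining relation~(\ref{yangiandef}). The left-hand side has loop-degree $r+s-2$, while each summand $t_{hj}^{(t)}t_{ik}^{(r+s-1-t)}$ with $t\geq 1$ has loop-degree $(t-1)+(r+s-2-t)=r+s-3$, so only the $t=0$ summand survives the passage to the top degree. Consequently, in $gr^L Y(\gl_{M|N})$,
\begin{equation*}
[\bar t_{ij}^{(r)},\bar t_{hk}^{(s)}]=(-1)^{\pa{i}\,\pa{j}+\pa{i}\,\pa{h}+\pa{j}\,\pa{h}}\Big(\delta_{hj}\bar t_{ik}^{(r+s-1)}-\delta_{ik}\bar t_{hj}^{(r+s-1)}\Big).
\end{equation*}

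Next, since $U(\gl_{M|N}[t])$ satisfies the usual universal property, to see that $\Phi$ is a well-defined superalgebra homomorphism it suffices to check that the proposed assignment respects the Lie superbracket $[E_{ij}t^{r-1},E_{hk}t^{s-1}]=\big(\delta_{hj}E_{ik}-(-1)^{(\pa{i}+\pa{j})(\pa{h}+\pa{k})}\delta_{ik}E_{hj}\big)t^{r+s-2}$. On the $\delta_{hj}$ summand the constraint $\pa{h}=\pa{j}$ reduces $\pa{i}\pa{j}+\pa{i}\pa{h}+\pa{j}\pa{h}$ to $\pa{j}\equiv\pa{h}\pmod 2$, which, combined with the twist $(-1)^{\pa{i}}$ coming from $\Phi(E_{ik}t^{r+s-2})$, matches the prefactor $(-1)^{\pa{i}+\pa{h}}$ that one pulls out of the graded supercommutator on the other side. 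On the $\delta_{ik}$ summand the constraint $\pa{i}=\pa{k}$ turns $(\pa{i}+\pa{j})(\pa{h}+\pa{k})$ into $\pa{i}\pa{j}+\pa{i}\pa{h}+\pa{j}\pa{h}+\pa{i}\pmod 2$, so the two sign contributions again cancel.

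Finally, $\Phi$ is surjective because the $\bar t_{ij}^{(r)}$ generate $gr^L Y(\gl_{M|N})$ as an algebra. For injectivity, Proposition~\ref{PBWSY} yields an ordered-monomial basis of $gr^L Y(\gl_{M|N})$ in the $\bar t_{ij}^{(r)}$ (no odd generator appearing to a power $\geq 2$), while the classical super PBW theorem for enveloping algebras supplies the matching basis of $U(\gl_{M|N}[t])$ in the $E_{ij}t^{r-1}$. Since $\Phi$ carries one basis onto the other up to the nonzero scalars $(-1)^{\pa{i}}$, it is a linear bijection and its inverse is precisely the map in the statement. I expect the only real obstacle to be the sign bookkeeping of the third paragraph, where three independent sign sources — the coefficient in~(\ref{yangiandef}), the twist $(-1)^{\pa{i}}$ in $\Phi$, and the supercommutator sign intrinsic to $\gl_{M|N}$ — must be shown to cancel in each of the two cases separately.
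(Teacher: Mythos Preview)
Your argument is correct and is precisely the standard route: pass to the top graded piece of relation~(\ref{yangiandef}), identify the resulting Lie superalgebra with $\gl_{M|N}[t]$ via the twist $(-1)^{\pa{i}}$, and then compare PBW bases using Proposition~\ref{PBWSY}. The paper itself gives no proof here---it simply records the statement as a corollary of the PBW theorem and cites \cite[Corollary~1]{Go}---so you have in effect supplied the details that the paper leaves implicit, and your sign verification in the third paragraph checks out in both the $\delta_{hj}$ and $\delta_{ik}$ cases.
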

\section{Gauss decomposition and quasideterminants}\label{Gauss}
Let $\lambda$ be a composition of $M$ and $\nu$ be a composition of $N$. In the remaining part of this article, for notational reason, we set
\[
\mu_i=\lambda_i \qquad\text{and}\qquad \mu_{m+j}=\nu_j \quad\text{for all}\quad 1\leq i\leq m,\;\; 1\leq j\leq n,
\]
and $\mu=(\mu_1,\mu_2,\ldots,\mu_{m}\,|\,\mu_{m+1},\mu_{m+2},\ldots,\mu_{m+n})$ denotes the composition of ($M|N$).

By definition, the leading minors of the matrix $T(u)$ are invertible. Then it possesses a $Gauss$ $decomposition$ (cf.~\cite{GR})
\begin{equation*}
T(u) = F(u) D(u) E(u)
\end{equation*}
for unique {\em block matrices} $D(u)$, $E(u)$ and $F(u)$ of the form
$$
D(u) = \left(
\begin{array}{cccc}
D_{1}(u) & 0&\cdots&0\\
0 & D_{2}(u) &\cdots&0\\
\vdots&\vdots&\ddots&\vdots\\
0&0 &\cdots&D_{m+n}(u)
\end{array}
\right),
$$

$$
E(u) =
\left(
\begin{array}{cccc}
I_{\mu_1} & E_{1,2}(u) &\cdots&E_{1,m+n}(u)\\
0 & I_{\mu_2} &\cdots&E_{2,m+n}(u)\\
\vdots&\vdots&\ddots&\vdots\\
0&0 &\cdots&I_{\mu_{m+n}}
\end{array}
\right),\:
$$

$$
F(u) = \left(
\begin{array}{cccc}
I_{\mu_1} & 0 &\cdots&0\\
F_{2,1}(u) & I_{\mu_2} &\cdots&0\\
\vdots&\vdots&\ddots&\vdots\\
F_{m+n,1}(u)&F_{m+n,2}(u) &\cdots&I_{\mu_{m+n}}
\end{array}
\right),
$$
where
\begin{align}
D_a(u) &=\big(D_{a;i,j}(u)\big)_{1 \leq i,j \leq \mu_a},\\
E_{a,b}(u)&=\big(E_{a,b;i,j}(u)\big)_{1 \leq i \leq \mu_a, 1 \leq j \leq \mu_b},\\
F_{b,a}(u)&=\big(F_{b,a;i,j}(u)\big)_{1 \leq i \leq \mu_b, 1 \leq j \leq \mu_a},
\end{align}
are $\mu_a \times \mu_a$,
$\mu_a \times \mu_b$
and  $\mu_b \times\mu_a$ matrices, respectively, for all $1\le a\le m+n$ in (3.1)
and all $1\le a<b\le m+n$ in (3.2) and (3.3).

\begin{definition}
We call the indices $a,b$ the $block$ $positions$, and the indices $i,j$ the $entry$ $positions$.
\end{definition}
Also define the $\mu_a\times\mu_a$ matrix
$D_a^{\prime}(u)=\big(D_{a;i,j}^{\prime}(u)\big)_{1\leq i,j\leq \mu_a}$ by
\begin{equation*}
D_a^{\prime}(u):=\big(D_a(u)\big)^{-1}.
\end{equation*}
The entries of these matrices are expanded into power series
\begin{eqnarray*}
&D_{a;i,j}(u) =& \sum_{r \geq 0} D_{a;i,j}^{(r)} u^{-r},\\
&D_{a;i,j}^{\prime}(u) =& \sum_{r \geq 0}D^{\prime(r)}_{a;i,j} u^{-r},\\
&E_{a,b;i,j}(u) =& \sum_{r \geq 1} E_{a,b;i,j}^{(r)} u^{-r},\\
&F_{b,a;i,j}(u) =& \sum_{r \geq 1} F_{b,a;i,j}^{(r)} u^{-r}.
\end{eqnarray*}
Moreover, for $1\leq a\leq m+n-1$, we set
\begin{eqnarray*}
E_{a;i,j}(u) :=& E_{a,a+1;i,j}(u)=\sum_{r \geq 1} E_{a;i,j}^{(r)} u^{-r},\\
F_{a;i,j}(u) :=& F_{a+1,a;i,j}(u)=\sum_{r \geq 1} F_{a;i,j}^{(r)} u^{-r}.
\end{eqnarray*}

There are explicit descriptions of all these series in terms of \textit{quasideterminants} (cf.~\cite{GKLLRT}, \cite{GR}). To write them down, we introduce the following notation. Suppose that $A, B, C$ and $D$ are $a \times a$, $a \times b$, $b \times a$ and $b \times b$ matrices respectively with entries in some ring.
Assuming that the matrix $A$ is invertible, we define
\begin{equation*}
\left|
\begin{array}{cc}
A&B\\
C&
\hbox{\begin{tabular}{|c|}\hline$D$\\\hline\end{tabular}}
\end{array}
\right| := D - C A^{-1} B.
\end{equation*}
We write the matrix $T(u)$ in block form as
$$
T(u) = \left(
\begin{array}{lll}
{^\mu}T_{1,1}(u)&\cdots&{^\mu}T_{1,m+n}(u)\\
\vdots&\ddots&\cdots\\
{^\mu}T_{m+n,1}(u)&\cdots&{^\mu}T_{m+n,m+n}(u)\\
\end{array}
\right),
$$ where each ${^\mu}T_{a,b}(u)$ is a $\mu_a \times \mu_b$ matrix.

\begin{proposition}\cite{GR}\label{quasi} We have
\begin{align}\label{quasid}
&D_a(u) =
\left|
\begin{array}{cccc}
{^\mu}T_{1,1}(u) & \cdots & {^\mu}T_{1,a-1}(u)&{^\mu}T_{1,a}(u)\\
\vdots & \ddots &\vdots&\vdots\\
{^\mu}T_{a-1,1}(u)&\cdots&{^\mu}T_{a-1,a-1}(u)&{^\mu}T_{a-1,a}(u)\\
{^\mu}T_{a,1}(u) & \cdots & {^\mu}T_{a,a-1}(u)&
\hbox{\begin{tabular}{|c|}\hline${^\mu}T_{a,a}(u)$\\\hline\end{tabular}}
\end{array}
\right|,\\[4mm]
&E_{a,b}(u) =\label{quasie}
D^{\prime}_a(u)
\left|\begin{array}{cccc}
{^\mu}T_{1,1}(u) & \cdots &{^\mu}T_{1,a-1}(u)& {^\mu}T_{1,b}(u)\\
\vdots & \ddots &\vdots&\vdots\\
{^\mu}T_{a-1,1}(u) & \cdots & {^\mu}T_{a-1,a-1}(u)&{^\mu}T_{a-1,b}(u)\\
{^\mu}T_{a,1}(u) & \cdots & {^\mu}T_{a,a-1}(u)&
\hbox{\begin{tabular}{|c|}\hline${^\mu}T_{a,b}(u)$\\\hline\end{tabular}}
\end{array}
\right|,\\[4mm]
&F_{b,a}(u) =\label{quasif}
\left|
\begin{array}{cccc}
{^\mu}T_{1,1}(u) & \cdots &{^\mu}T_{1,a-1}(u)& {^\mu}T_{1,a}(u)\\
\vdots & \ddots &\vdots&\vdots\\
{^\mu}T_{a-1,1}(u) & \cdots & {^\mu}T_{a-1,a-1}(u)&{^\mu}T_{a-1,a}(u)\\
{^\mu}T_{b,1}(u) & \cdots & {^\mu}T_{b,a-1}(u)&
\hbox{\begin{tabular}{|c|}\hline${^\mu}T_{b,a}(u)$\\\hline\end{tabular}}
\end{array}
\right|D^{\prime}_a(u),
\end{align}
for all $1\leq a\leq m+n$ in (\ref{quasid}) and $1\leq a<b\leq m+n$ in (\ref{quasie}), (\ref{quasif}).
\end{proposition}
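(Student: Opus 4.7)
The statement is Gelfand--Retakh's quasideterminant formula for the Gauss decomposition of a matrix over an associative (noncommutative) ring, applied to our block matrix $T(u)$. The plan is to verify that the classical derivation carries over unchanged into the super setting. The crucial observation is that the sign $(-1)^{\pa{j}(\pa{i}+1)}$ built into the definition of $T(u)$ is exactly what restores ordinary, unsigned matrix multiplication on $T(u)\in\mathrm{Mat}_{M+N}(Y(\gl_{M|N})[[u^{-1}]])$. Consequently the Gauss decomposition $T(u)=F(u)D(u)E(u)$ and all subsequent manipulations are instances of ordinary noncommutative linear algebra.

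I would first establish (\ref{quasid}) by induction on $a$. The base case $a=1$ gives $D_1(u)={^\mu}T_{1,1}(u)$, which is immediate from the triviality of the first block row of $F(u)$ and the first block column of $E(u)$. For the inductive step, restrict the Gauss decomposition of $T(u)$ to the leading $a\times a$ block principal submatrix $T^{[a]}(u)$; uniqueness of the Gauss decomposition forces this restriction to be $F^{[a]}(u)D^{[a]}(u)E^{[a]}(u)$ with $D^{[a]}(u)=\mathrm{diag}(D_1(u),\ldots,D_a(u))$. Reading off the $(a,a)$-block yields
\begin{equation*}
{^\mu}T_{a,a}(u)=D_a(u)+\sum_{k=1}^{a-1}F_{a,k}(u)\,D_k(u)\,E_{k,a}(u).
\end{equation*}
Using the inductive decomposition of $T^{[a-1]}(u)$ to express $(F_{a,1},\ldots,F_{a,a-1})$ and $(E_{1,a},\ldots,E_{a-1,a})^{\mathrm{T}}$ in terms of the blocks of $T(u)$, this sum simplifies to $(\text{bottom row})\cdot T^{[a-1]}(u)^{-1}\cdot (\text{rightmost column})$, which identifies $D_a(u)$ with the quasideterminant in (\ref{quasid}).

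For (\ref{quasie}) and (\ref{quasif}) I would employ an augmentation trick. To derive (\ref{quasie}), replace the last block column of $T^{[a]}(u)$ by $\bigl({^\mu}T_{1,b}(u),\ldots,{^\mu}T_{a,b}(u)\bigr)^{\mathrm{T}}$; repeating the block computation above on this modified matrix shows that its $(a,a)$-Schur complement equals $D_a(u)E_{a,b}(u)$, and left-multiplying by $D_a^{\prime}(u)$ produces the quasideterminant on the right of (\ref{quasie}). The formula (\ref{quasif}) for $F_{b,a}(u)$ follows symmetrically by replacing the last block row of $T^{[a]}(u)$ with $\bigl({^\mu}T_{b,1}(u),\ldots,{^\mu}T_{b,a}(u)\bigr)$ and right-multiplying by $D_a^{\prime}(u)$.

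The main obstacle is just careful bookkeeping of block indices and signs. Since the sign convention in $T(u)$ is designed precisely to make matrix multiplication behave normally, no new difficulty arises beyond what is already present in the non-super analogue of \cite[Section~4]{BK1}, and the proof can be completed by appealing directly to the quasideterminant identities of \cite{GR}.
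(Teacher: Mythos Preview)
Your proposal is correct and is essentially the standard Gelfand--Retakh argument. Note, however, that the paper does not supply its own proof of this proposition: it simply cites \cite{GR} and states the result. Your sketch --- the Schur-complement induction for $D_a(u)$ and the column/row augmentation for $E_{a,b}(u)$ and $F_{b,a}(u)$ --- is precisely the content of that reference, and your observation that the sign $(-1)^{\pa{j}(\pa{i}+1)}$ in $T(u)$ restores ordinary matrix multiplication is exactly why the non-super quasideterminant theory applies verbatim.
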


We denote the $(i,j)$-th entry of the $\mu_a\times\mu_b$ matrix $^\mu T_{a,b}(u)$ by $T_{a,b;i,j}(u)$ and denote the  coefficient of $u^{-r}$ in $T_{a,b;i,j}(u)$ by $T_{a,b;i,j}^{(r)}$. By Proposition 3.1, we immediately have
\begin{equation}
E_{b-1;i,j}^{(1)}=T_{b-1,b;i,j}^{(1)}, \qquad F_{b-1;i,j}^{(1)}=T_{b,b-1;i,j}^{(1)}, \qquad
\text{for all admissible} \quad b,i,j,
\end{equation}\label{Jacobi}
and\begin{equation}\label{Dtident}
D_{1;i,j}^{(r)}=T_{1,1;i,j}^{(r)}=t_{i,j}^{(r)},\quad \text{for all}\quad 1\leq i,j\leq \mu_1, r\geq 0.
\end{equation}
By induction, one may show that for each pair $a$, $b$ such that $1<a+1<b\leq m+n-1$ and $1\leq i\leq\mu_a$, $1 \leq j \leq \mu_b$, we have
\begin{equation}
E_{a,b;i,j}^{(r)} = (-1)^{\overline{b-1}}[E_{a,b-1;i,k}^{(r)}, E_{b-1;k,j}^{(1)}],
\qquad
F_{b,a;i,j}^{(r)} = (-1)^{\overline{b-1}}[F_{b-1;i,k}^{(1)}, F_{b-1,a;k,j}^{(r)}],\label{ter}
\end{equation} for any $1 \leq k \leq \mu_{b-1}$. Here, $\pa{a}:=0$ if $1\le a\le m$ and $\pa{a}:=1$ if $m+1\le a\le m+n$.

By multiplying out the matrix product $T(u)=F(u)D(u)E(u)$, we see that each $t_{ij}^{(r)}$ can be expressed as a sum of monomials in $D_{a;i,j}^{(r)}$, $E_{a,b;i,j}^{(r)}$ and $F_{b,a;i,j}^{(r)}$, appearing in certain order that all $F$'s before $D$'s and all $D$'s before $E$'s. By (\ref{ter}), it is enough to use $D_{a;i,j}^{(r)}$, $E_{a;i,j}^{(r)}$ and $ F_{a;i,j}^{(r)}$ only, rather than all $E$'s and $F$'s. We have proved the following theorem.

\begin{theorem}\label{gendef}
The super Yangian $Y(\gl_{M|N})$ is generated as an algebra by the following elements
\begin{align*}
&\big\lbrace D_{a;i,j}^{(r)}, D_{a;i,j}^{\prime(r)} \,|\, {1\leq a\leq m+n,\; 1\leq i,j\leq \mu_a,\; r\geq 0}\big\rbrace,\\
&\big\lbrace E_{a;i,j}^{(r)} \,|\, {1\leq a< m+n,\; 1\leq i\leq \mu_a, 1\leq j\leq\mu_{a+1},\; r\geq 1}\big\rbrace,\\
&\big\lbrace F_{a;i,j}^{(r)} \,|\, {1\leq a< m+n,\; 1\leq i\leq\mu_{a+1}, 1\leq j\leq \mu_a,\; r\geq 1}\big\rbrace.
\end{align*}
\end{theorem}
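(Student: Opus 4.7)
The plan is to verify the sketch offered in the paragraph immediately before the theorem. By definition, $Y(\gl_{M|N})$ is generated as an algebra by the elements $t_{ij}^{(r)}$, so the goal reduces to expressing each $t_{ij}^{(r)}$ as a polynomial in the proposed $D$'s, $E$'s, and $F$'s.

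First, I would multiply out the Gauss decomposition $T(u) = F(u)D(u)E(u)$ in block form. Writing $i$ and $j$ in block--entry coordinates as $(a,i')$ and $(b,j')$ with $1 \le i' \le \mu_a$ and $1 \le j' \le \mu_b$, the $(a,b;i',j')$--entry of $T(u)$ becomes
\[
T_{a,b;i',j'}(u) = \sum_{c=1}^{\min(a,b)}\ \sum_{l,k} F_{a,c;i',l}(u)\, D_{c;l,k}(u)\, E_{c,b;k,j'}(u),
\]
where we use the convention $F_{a,a}(u) = E_{a,a}(u) = I_{\mu_a}$ (which forces the $F$--factor to disappear when $c = a$ and the $E$--factor to disappear when $c = b$). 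Extracting the coefficient of $u^{-r}$ writes every $t_{ij}^{(r)}$ as a finite polynomial in the coefficients $D_{c;l,k}^{(s)}$, $E_{c,b;k,j'}^{(s)}$, and $F_{a,c;i',l}^{(s)}$ for $s \le r$.

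Second, to cut the $E$'s and $F$'s down to those between \emph{consecutive} blocks, I would invoke the iterated commutator identity (\ref{ter}), which for $b - a \ge 2$ realizes $E_{a,b;i,j}^{(r)}$ as a supercommutator $(-1)^{\overline{b-1}}[E_{a,b-1;i,k}^{(r)}, E_{b-1;k,j}^{(1)}]$ and symmetrically for the $F$'s. Induction on $b - a$ then expresses each non-consecutive $E_{a,b;i,j}^{(r)}$ as an iterated supercommutator in the consecutive generators $E_{c;\cdot,\cdot}^{(\cdot)}$ with $a \le c \le b-1$, and analogously for the $F$'s. Substituting these expressions back into the formula of the previous paragraph writes every $t_{ij}^{(r)}$ as a polynomial in the generators listed in the theorem, which proves the claim.

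I do not expect a genuine obstacle: the proof is essentially bookkeeping, as the substantive work has already been carried out in Proposition~\ref{quasi} and in the inductive derivation of (\ref{ter}). The only point requiring care is keeping track of the $\mathbb{Z}_2$--signs in the supercommutators, but these are already packaged into the factor $(-1)^{\overline{b-1}}$ in (\ref{ter}). I would also note, for transparency, that including the $D_{a;i,j}^{\prime(r)}$ among the generating set is in fact redundant for the conclusion: the identity $D_a(u) D'_a(u) = I_{\mu_a}$ recovers each $D'^{(r)}_{a;i,j}$ recursively from the $D^{(s)}_{a;\cdot,\cdot}$ with $s \le r$. Including them is merely a convenience for the triangular decomposition used in the later sections.
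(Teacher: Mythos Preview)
Your proposal is correct and follows exactly the paper's own argument: multiply out $T(u)=F(u)D(u)E(u)$ to write each $t_{ij}^{(r)}$ in terms of the $D$'s, $E_{a,b}$'s and $F_{b,a}$'s, then use the iterated commutator identity~(\ref{ter}) to reduce the non-consecutive $E$'s and $F$'s to consecutive ones. Your additional remark that the $D^{\prime(r)}_{a;i,j}$ are redundant (recoverable from $D_a(u)D'_a(u)=I_{\mu_a}$) is a correct observation that the paper does not make explicit.
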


\section{Maps between super Yangians}

Our ultimate goal in this article is to find out the defining relations among the generating elements $\big\lbrace D_{a;i,j}^{(r)}, D_{a;i,j}^{\prime(r)}, E_{a;i,j}^{(r)}, F_{a;i,j}^{(r)}\big\rbrace$ in $Y(\gl_{M|N})$.
The strategy is to work out the special cases when $m$ and $n$ are either 1 or 2, which are relatively less complicated, and then to apply the maps in this section to obtain the relations in the general case.
\begin{proposition}
\begin{enumerate}
  \item [(1)]The map $\rho_{M|N}:Y(\gl_{M|N})\rightarrow Y(\gl_{N|M})$ defined by
  \[
  \rho_{M|N}\big(t_{ij}(u)\big)=t_{M+N+1-i,M+N+1-j}(-u)
  \]
   is an algebra isomorphism.
  \item [(2)]The map $\omega_{M|N}:Y(\gl_{M|N})\rightarrow Y(\gl_{M|N})$ defined by
  \[
  \omega_{M|N}\big(T(u)\big)=\big(T(-u)\big)^{-1}
  \]
   is an algebra automorphism.
  \item [(3)]For any $k\in\mathbb{Z}_{\ge0}$, the map $\psi_k:Y(\gl_{M|N})\rightarrow Y(\gl_{k+M|N})$ defined by
  \[
  \psi_k=\omega_{k+M|N}\circ\varphi_{M|N}\circ\omega_{M|N},
  \]
  where $\varphi_{M|N}:Y(\gl_{M|N})\rightarrow Y(\gl_{k+M|N})$ is the inclusion which sends each $t_{ij}^{(r)}$
   in $Y(\gl_{M|N})$ to $t_{k+i,k+j}^{(r)}$ in $Y(\gl_{k+M|N})$, is an injective algebra homomorphism.
  \item [(4)]The map $\zeta_{M|N}:Y(\gl_{M|N})\rightarrow Y(\gl_{N|M})$ defined by
  \[
  \zeta_{M|N}=\rho_{M|N}\circ\omega_{M|N}
  \]
  is an algebra isomorphism.
\end{enumerate}
\end{proposition}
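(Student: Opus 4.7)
The plan is to verify that each map respects the defining relations of $Y(\gl_{M|N})$ and then upgrade to the appropriate type of morphism by involutivity or by composition; the series form (\ref{ydefs}) is convenient for (1) and (3), while the matrix RTT form (\ref{RTT}) is cleaner for (2). For part (1), I would substitute $i \mapsto i' := M+N+1-i$ (and similarly for $j, h, k$) together with $u \mapsto -u$, $v \mapsto -v$ into (\ref{ydefs}). The parity flips: $\pa{i}' = 1 - \pa{i}$ under the $Y(\gl_{N|M})$ convention, and an elementary $\bmod\,2$ expansion yields $\pa{i}'\pa{j}' + \pa{i}'\pa{h}' + \pa{j}'\pa{h}' \equiv 1 + \pa{i}\pa{j} + \pa{i}\pa{h} + \pa{j}\pa{h}$. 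The extra $-1$ from the sign prefactor is exactly cancelled by the $-1$ that $u \mapsto -u$, $v \mapsto -v$ produces in $1/(u-v)$, while $\pa{i}' + \pa{j}' \equiv \pa{i} + \pa{j}$ guarantees that the left-hand supercommutator is preserved. This makes $\rho_{M|N}$ an algebra homomorphism, and the evident identity $\rho_{N|M} \circ \rho_{M|N} = \mathrm{id}$ on generators then promotes it to an isomorphism.

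For part (2), I would set $T^{\omega}(u) := T(-u)^{-1}$ and verify RTT for $T^\omega$. Starting from (\ref{RTT}) with $u, v$ replaced by $-u, -v$ gives $R(v-u) T_1(-u) T_2(-v) = T_2(-v) T_1(-u) R(v-u)$. The auxiliary tool is the scalar unitarity $R(u) R(-u) = 1 - u^{-2}$, which follows from $P_{12}^2 = 1$ in the $\mathbb{Z}_2$-graded tensor square. Using this to move an $R$-factor past the $T$'s modulo a scalar, then conjugating by $T_1(-u)^{-1} T_2(-v)^{-1}$ on the appropriate sides, I obtain $R(u-v) T_1^\omega(u) T_2^\omega(v) = T_2^\omega(v) T_1^\omega(u) R(u-v)$. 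Hence $\omega_{M|N}$ is an endomorphism, and the short computation $\omega_{M|N}^2(T(u)) = (T(u)^{-1})^{-1} = T(u)$, using $\omega_{M|N}(T(u)^{-1}) = T(-u)$, shows it is an involution, hence an automorphism.

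For part (3), the inclusion $\varphi_{M|N}$ is an algebra homomorphism because $k+i \leq k+M \iff i \leq M$, so the parity of the image $t_{k+i,k+j}^{(r)}$ in $Y(\gl_{k+M|N})$ equals $\pa{i} + \pa{j}$ and the right-hand sum in (\ref{yangiandef}) is transported index-for-index with identical sign prefactor. Injectivity of $\varphi_{M|N}$ follows from Proposition \ref{PBWSY}: ordered monomials in $\{t_{ij}^{(r)}\}$ map to ordered monomials in a subset of the PBW generators of $Y(\gl_{k+M|N})$, which remain linearly independent. Composing with the isomorphisms $\omega_{M|N}$ and $\omega_{k+M|N}$ preserves injectivity, so $\psi_k$ is an injective algebra homomorphism. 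Part (4) is then immediate, as $\zeta_{M|N}$ is the composition of the isomorphisms produced in (1) and (2). I expect the main obstacle to be part (2): the RTT derivation for $T(-u)^{-1}$ requires careful bookkeeping of non-scalar $R$-matrices, matrix inverses, and the graded signs built into $P_{12}$, but once the unitarity $R(u) R(-u) = 1 - u^{-2}$ is secured in the $\mathbb{Z}_2$-graded setting the remaining manipulations are formal.
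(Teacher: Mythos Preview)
Your proposal is correct and follows essentially the same approach as the paper, which dispatches the entire proposition in one line: ``Follows by checking that these maps preserve the RTT relation (\ref{RTT}).'' You supply the details the paper omits; the only cosmetic difference is that for (1) and (3) you verify the equivalent series relation (\ref{ydefs}) rather than the matrix form (\ref{RTT}), and you add the PBW-based injectivity argument for $\varphi_{M|N}$ that the paper leaves implicit.
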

\begin{proof}
Follows by checking that these maps preserve the RTT relation (\ref{RTT}).
\end{proof}

\begin{remark} The composition $Y(\gl_N)\cong Y(\gl_{N|0})\xrightarrow{\zeta_{N|0}}Y(\gl_{0|N})$ is an algebra isomorphism.
\end{remark}

We call $\psi_k$ the $shift$ $map$ and $\zeta_{M|N}$ the $swap$ $map$. It is clear that $\psi_0$ is the identity map and $\zeta_{M|N}$ has order 2. Since they are important for us, we write down their images explicitly.
\begin{lemma}
Let $1\leq i,j \leq M+N$.

\begin{enumerate}
  \item [(1)]For any $k\in\mathbb{N}$, we have \begin{equation}\label{psit}
  \psi_k\big(t_{ij}(u)\big)=
  \left| \begin{array}{cccc} t_{11}(u) &\cdots &t_{1k}(u) &t_{1, k+j}(u)\\
         \vdots &\ddots &\vdots &\vdots \\
         t_{k1}(u) &\cdots &t_{kk}(u) &t_{k, k+j}(u)\\
         t_{k+i, 1}(u) &\cdots &t_{k+i,k}(u) &\boxed{t_{k+i, k+j}(u)}
         \end{array} \right|.
         \end{equation}
  \item [(2)]We have \begin{equation}\label{zetat}
        \zeta_{M|N}\big(t_{ij}(u)\big)=t_{M+N+1-i,M+N+1-j}^{\prime}(u).
        \end{equation}
\end{enumerate}
\end{lemma}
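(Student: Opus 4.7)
The plan is to establish both formulas by unwinding the three-fold composition that defines each map and working at the level of the full $T$-matrix, where standard block-matrix identities apply.

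For (2), I start from $\zeta_{M|N} = \rho_{M|N}\circ\omega_{M|N}$. The definition of $\omega_{M|N}$ gives $\omega_{M|N}(t_{ij}(u)) = t^{\prime}_{ij}(-u)$. The map $\rho_{M|N}$ acts on the full matrix by $T(u) \mapsto J\,T(-u)\,J$, where $J$ is the antidiagonal permutation matrix; being an algebra homomorphism it commutes with taking matrix inverses, so $\rho_{M|N}(t^{\prime}_{ij}(u)) = t^{\prime}_{M+N+1-i,\,M+N+1-j}(-u)$. Substituting and observing that the two sign flips $u\mapsto -u$ cancel produces (\ref{zetat}).

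For (1), let $\tilde T(u)$ denote the $T$-matrix of $Y(\gl_{k+M|N})$ and write it in block form
\[
\tilde T(u) = \left(\begin{array}{cc} \tilde A(u) & \tilde B(u) \\ \tilde C(u) & \tilde D(u) \end{array}\right),
\]
with $\tilde A$ of size $k\times k$ and $\tilde D$ of size $(M+N)\times(M+N)$. By definition $\varphi_{M|N}(T(u)) = \tilde D(u)$ as matrices, and since $\varphi_{M|N}$ is an algebra homomorphism this forces $\varphi_{M|N}(T(u)^{-1}) = \tilde D(u)^{-1}$. Therefore $(\varphi_{M|N}\circ\omega_{M|N})(t_{ij}(u)) = [\tilde D(-u)^{-1}]_{ij}$. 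To finish I apply $\omega_{k+M|N}$, which sends the full matrix $\tilde T(u)$ to $\tilde T(-u)^{-1}$; restricting to the bottom-right block and using the Schur-complement identity coming from the block LDU factorization yields
\[
\omega_{k+M|N}(\tilde D(u)) = \bigl[\tilde D(-u) - \tilde C(-u)\,\tilde A(-u)^{-1}\,\tilde B(-u)\bigr]^{-1}.
\]
Inverting both sides and then substituting $-u$ for $u$ gives
\[
\omega_{k+M|N}(\tilde D(-u)^{-1}) = \tilde D(u) - \tilde C(u)\,\tilde A(u)^{-1}\,\tilde B(u),
\]
whose $(i,j)$-entry is exactly the Schur complement $D - C A^{-1} B$ appearing inside the quasideterminant of (\ref{psit}).

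The only real subtlety is justifying the block-matrix manipulations (products, inverses, and the Schur-complement identity) in the $\mathbb{Z}_2$-graded setting. This is precisely the reason for the sign $(-1)^{\pa{j}(\pa{i}+1)}$ built into the definition of $T(u)$ in Section~2: it is designed so that all matrix computations with $T(u)$ proceed formally as in the non-super case. Once that remark is in place, the rest of the argument is a clean chase through the three-fold compositions and presents no further difficulty.
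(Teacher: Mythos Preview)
Your argument is correct. The paper states this lemma without proof, treating both formulas as direct consequences of the definitions; your proposal supplies exactly the computation one would expect, unwinding the three-fold compositions $\psi_k=\omega_{k+M|N}\circ\varphi_{M|N}\circ\omega_{M|N}$ and $\zeta_{M|N}=\rho_{M|N}\circ\omega_{M|N}$ and identifying the result via the Schur complement. Your remark about why the sign $(-1)^{\pa{j}(\pa{i}+1)}$ makes the block-matrix manipulations go through is the right justification; one small point worth making explicit is that $\varphi_{M|N}$ preserves parities (the parity of $i$ in $Y(\gl_{M|N})$ equals that of $k+i$ in $Y(\gl_{k+M|N})$), which is exactly why the matrix identity $\varphi_{M|N}(T(u)^{-1})=\tilde D(u)^{-1}$ holds without extra signs.
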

First note that the description of $\psi_k(t_{ij})$ in (\ref{psit}) is independent of M and N, hence our notation is unambiguous.
Also, (\ref{psit}) along with quasideterminants in Section~3 implies that
 \begin{eqnarray}\label{psid}
  D_{a;i,j}(u)&=&\psi_{\mu_1+\mu_2+\ldots +\mu_{a-1}}\big(D_{1;i,j}(u)\big),\\ \label{psie}
  E_{a;i,j}(u)&=&\psi_{\mu_1+\mu_2+\ldots +\mu_{a-1}}\big(E_{1;i,j}(u)\big),\\ \label{psif}
  F_{a;i,j}(u)&=&\psi_{\mu_1+\mu_2+\ldots +\mu_{a-1}}\big(F_{1;i,j}(u)\big).
 \end{eqnarray}

Secondly, observe that $\psi_k$ maps $t_{ij}^{\prime}(u)\in Y(\gl_{M|N})$ to $t_{k+i,k+j}^{\prime}(u)\in Y(\gl_{k+M|N})$. So $\psi_k\big(Y(\gl_{M|N})\big)$ is generated by the set
\{$t_{k+i,k+j}^{\prime(r)}\,|\,1\leq i,j\leq M+N, r\geq 0$\}, as a subalgebra of $Y(\gl_{k+M|N})$. If we pick any element $t_{ij}^{(r)}$ in the north-western $k\times k$ corner of $T(u)$ (viewed as an $(k+M+N)\times(k+M+N)$ matrix with entries in $Y(\gl_{k+M|N})[[u^{-1}]]$), the indices will never overlap with those of $\psi_k\big(Y(\gl_{M|N})\big)$, which are in the south-eastern $(M+N)\times(M+N)$ corner of the same $T(u)$. By equation (\ref{usefull}), they supercommute. Obviously, the elements in the north-western $k\times k$ corner in $Y(\gl_{k+M|N})$ generate a subalgebra isomorphic to $Y(\gl_k)$ by the defining relations (\ref{yangiandef}). We have proved the following lemma.
\begin{lemma}\label{corcommute}
The subalgebras $Y(\gl_k)$ and $\psi_k\big(Y(\gl_{M|N})\big)$ in $Y(\gl_{k+M|N})$ supercommute with each other.
\end{lemma}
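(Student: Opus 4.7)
The plan is to reduce the statement to checking that \emph{generators} of $Y(\gl_k)$ supercommute with generators of $\psi_k\big(Y(\gl_{M|N})\big)$, and then bootstrap to the whole subalgebras via the fact that the supercommutator is a super-derivation in each slot. The argument is essentially the one sketched in the paragraph preceding the lemma; the task is to organize it and justify each step.

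First I would pin down generating sets for the two subalgebras as subalgebras of $Y(\gl_{k+M|N})$. The subalgebra $Y(\gl_k)$ sits in the north-western $k\times k$ corner and is generated by $\{t_{ij}^{(r)}\mid 1\le i,j\le k,\,r\ge 1\}$; by the defining relations (\ref{yangiandef}) these elements satisfy precisely the defining relations of $Y(\gl_k)$ (with all parities $\pa{i}=\pa{j}=0$, so the sign is $+1$ and the brackets are ordinary commutators). On the other side, as observed in the paragraph before the lemma, $\psi_k$ sends $t_{ij}'(u)\in Y(\gl_{M|N})$ to $t_{k+i,k+j}'(u)\in Y(\gl_{k+M|N})$, so $\psi_k\big(Y(\gl_{M|N})\big)$ is generated by
\[
\{t_{k+h,k+l}^{\prime(s)}\mid 1\le h,l\le M+N,\; s\ge 1\}.
\]

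Next I would apply the key relation (\ref{usefull}) to a typical pair of generators, one from each side: take $t_{ij}(u)$ with $1\le i,j\le k$ and $t_{h',l'}'(v)$ with $h',l'\in\{k+1,\ldots,k+M+N\}$. Since $i,j\le k<h',l'$, we have $h'\ne j$ and $i\ne l'$, so both Kronecker deltas on the right-hand side of (\ref{usefull}) vanish, giving
\[
[t_{ij}(u),\,t_{h',l'}'(v)]=0.
\]
Equating coefficients of $u^{-r}v^{-s}$ shows that every pair of generators of $Y(\gl_k)$ and of $\psi_k\big(Y(\gl_{M|N})\big)$ supercommutes in $Y(\gl_{k+M|N})$.

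Finally I would promote this to a statement about the full subalgebras. Since the supercommutator $[\,\cdot\,,\,\cdot\,]$ is a super-derivation in each argument (a consequence of the super-Jacobi identity and the associativity of the product in $Y(\gl_{k+M|N})$), if $x$ supercommutes with two elements $y_1,y_2$ then $x$ supercommutes with $y_1y_2$, and similarly in the other slot. Iterating, any element of the subalgebra generated by the $t_{ij}^{(r)}$ with $i,j\le k$ supercommutes with any element of the subalgebra generated by the $t_{k+h,k+l}^{\prime(s)}$. This is precisely the assertion of the lemma. There is no real obstacle here; the only thing to be slightly careful about is making sure that the chosen generating set for $\psi_k\big(Y(\gl_{M|N})\big)$ is correct (which is why the preceding paragraph switches from $t_{ij}$ to $t_{ij}'$), and that the derivation property is applied with the right signs, but neither issue causes any genuine difficulty.
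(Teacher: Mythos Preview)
Your proof is correct and follows essentially the same approach as the paper: the paper's argument (given in the paragraph immediately preceding the lemma) identifies the same generating sets $\{t_{ij}^{(r)}\mid i,j\le k\}$ and $\{t_{k+h,k+l}^{\prime(s)}\}$, applies relation~(\ref{usefull}) to see the generators supercommute since the index ranges are disjoint, and then concludes. Your write-up is slightly more explicit about the super-derivation step extending from generators to the full subalgebras, but the substance is the same.
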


Now we study the map $\zeta_{M|N}$. Associate to the composition $\mu$, we may define the elements \{$D_{a;i,j}^{(r)};D^{\prime(r)}_{a;i,j}$\}, \{$E_{a;i,j}^{(r)}$\}, \{$F_{a;i,j}^{(r)}$\} in $Y(\gl_{M|N})$ by Gauss decomposition. Consider
\[\mu^{r}:=(\mu_{m+n},\ldots,\mu_{m+1}\,|\,\mu_m,\ldots,\mu_{2},\mu_{1}),\]
the reverse of $\mu$, which is a composition of $(N|M)$.
With $\mu^r$, we may similarly define the elements \{$D_{a;i,j}^{(r)};D^{\prime(r)}_{a;i,j}$\}, \{$E_{a;i,j}^{(r)}$\}, \{$F_{a;i,j}^{(r)}$\} in $Y(\gl_{N|M})$, by abuse of notations. Their relations are given in the following proposition, which is a generalization of \cite[Proposition~1]{Go}.

\begin{proposition}\label{zetadef} For all admissible a, i, j, we have
\begin{eqnarray}
 \zeta_{M|N}\big(D_{a;i,j}(u)\big)&=&D^{\prime}_{m+n+1-a;\mu_{a}+1-i,\mu_{a}+1-j}(u),\label{zd}\\
 \zeta_{M|N}\big(E_{a;i,j}(u)\big)&=&-F_{m+n-a;\mu_{a}+1-i,\mu_{a+1}+1-j}(u),\label{sze}\\
 \zeta_{M|N}\big(F_{a;i,j}(u)\big)&=&-E_{m+n-a;\mu_{a+1}+1-i,\mu_{a}+1-j}(u)\label{szf}.
  \end{eqnarray}
  Note that the D's, E's and F's on the left hand side are in $Y(\gl_{M|N})[[u^{-1}]]$, while those on the right hand side are in $Y(\gl_{N|M})[[u^{-1}]]$.
\end{proposition}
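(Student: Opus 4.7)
My plan is to recast $\zeta_{M|N}$ at the level of the $T$-matrix and then compare two different expressions for the Gauss decomposition of $\zeta_{M|N}(T(u))$ with respect to the composition $\mu$. Let $J$ denote the $(M+N)\times(M+N)$ antidiagonal permutation matrix, i.e.\ $J_{ij}=\delta_{i,M+N+1-j}$. Applying $\rho_{M|N}$ to the identity $T(u)T(u)^{-1}=I$ in $Y(\gl_{M|N})$, together with the identity $\rho_{M|N}(T(u))=J\,T(u)\,J$ in which the $T$ on the right denotes the matrix of generators of $Y(\gl_{N|M})$, yields $\rho_{M|N}(T(u)^{-1})=J\,T(u)^{-1}\,J$. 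Combining this with $\omega_{M|N}(T(u))=T(-u)^{-1}$, the formula in Lemma~4.2 is exactly the matrix identity
\[
\zeta_{M|N}(T(u))\;=\;J\,T(u)^{-1}\,J,
\]
again with the $T$ on the right being the matrix of generators of $Y(\gl_{N|M})$.

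I next compute the Gauss decomposition of $\zeta_{M|N}(T(u))$ with respect to $\mu$ in two ways. On one hand, since $\zeta_{M|N}$ is an algebra homomorphism and matrix multiplication is carried out in the usual manner, applying $\zeta_{M|N}$ entrywise to $T(u)=F(u)D(u)E(u)$ gives
\[
\zeta_{M|N}(T(u))=\zeta_{M|N}\bigl(F(u)\bigr)\,\zeta_{M|N}\bigl(D(u)\bigr)\,\zeta_{M|N}\bigl(E(u)\bigr),
\]
and because $\zeta_{M|N}$ fixes $0$ and $1$, these three factors remain block lower unitriangular, block diagonal and block upper unitriangular with block sizes $\mu_1,\ldots,\mu_{m+n}$. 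On the other hand, letting $T(u)=F(u)D(u)E(u)$ now denote the Gauss decomposition in $Y(\gl_{N|M})$ with respect to $\mu^r$ --- so that the right-hand sides of (3.14)--(3.16) are expressed in terms of these Gauss factors --- one has $T(u)^{-1}=E(u)^{-1}D'(u)F(u)^{-1}$, hence
\[
\zeta_{M|N}(T(u))\;=\;\bigl(J\,E(u)^{-1}\,J\bigr)\bigl(J\,D'(u)\,J\bigr)\bigl(J\,F(u)^{-1}\,J\bigr).
\]
Conjugation by $J$ reverses the block composition from $\mu^r$ to $\mu$ and swaps upper with lower triangularity, so the right-hand side is itself a Gauss decomposition of $\zeta_{M|N}(T(u))$ with respect to $\mu$.

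By the uniqueness of Gauss decomposition, matching the two expressions factor by factor yields
\[
\zeta_{M|N}\bigl(F(u)\bigr)=J\,E(u)^{-1}\,J,\qquad \zeta_{M|N}\bigl(D(u)\bigr)=J\,D'(u)\,J,\qquad \zeta_{M|N}\bigl(E(u)\bigr)=J\,F(u)^{-1}\,J.
\]
Reading off the $(a,a)$-block under the correspondence $J:(a;i)_\mu\leftrightarrow(m+n+1-a;\mu_a+1-i)_{\mu^r}$ immediately yields (3.14). For (3.15) and (3.16) I extract the $(a,a+1)$- and $(a+1,a)$-blocks respectively, and invoke the elementary identity that the first super- (resp.\ sub-) diagonal block of the inverse of a block unitriangular matrix equals the negative of the corresponding block of the matrix itself; this is precisely what produces the minus signs on the right-hand sides of (3.15)--(3.16).

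The main obstacle I anticipate is sign-tracking within the $\mathbb{Z}_2$-graded framework: in particular, verifying that the matrix identity $\rho_{M|N}(T(u))=J\,T(u)\,J$ holds with no extra parity sign, and that the assertion from Section~2 that matrix multiplication is carried out in the usual manner --- made possible by the factor $(-1)^{\pa{j}(\pa{i}+1)}$ attached to $E_{ij}$ in the tensor form of $T(u)$ --- propagates both to the Gauss factors and to formal inverses of block unitriangular matrices, so that the $-1$ on the right-hand side of (3.15)--(3.16) is genuinely $-1$ and is not decorated by any further sign from the super structure. Once these bookkeeping issues are pinned down, everything else is a direct unravelling of the definitions combined with the uniqueness of Gauss decomposition.
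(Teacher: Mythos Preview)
Your approach is correct and is essentially the same as the paper's: both compare the Gauss decomposition of $T(u)$ (transported by $\zeta_{M|N}$) with that of $T(u)^{-1}$, the paper doing this block by block via the identities (4.10)--(4.15) and an induction on $a$, while you package the same comparison as the matrix identity $\zeta_{M|N}(T(u))=J\,T(u)^{-1}\,J$ and invoke uniqueness of the Gauss decomposition. The only substantive difference is presentational---your version is slightly more conceptual, while the paper's inductive argument makes the entrywise index flips and the appearance of $\widetilde{E}$, $\widetilde{F}$ in the general off-diagonal blocks (equations (4.16)--(4.17)) more explicit; the sign bookkeeping you flag is exactly the content that the paper's induction absorbs step by step.
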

\begin{proof}
The proof is essentially the same as \cite[Proposition~1]{Go}, except that we decompose the matrix $T(u)$ into block decompositions and the entry positions are flipped around by $\zeta$.
For a given composition $\mu$, multiply out the matrix products
\[ T(u)=F(u)D(u)E(u) \qquad \text{and} \qquad T(u)^{-1}=E(u)^{-1}D(u)^{\prime}F(u)^{-1}.\]
Then the following matrix identities hold.
\begin{eqnarray}
 \label{t11} T_{a,a}(u)&=&D_a(u)+\sum_{c<a}F_{a,c}(u)D_c(u)E_{c,a}(u),\\
 \label{t'11} T^{\prime}_{a,a}(u)&=&D^{\prime}_a(u)+\sum_{c>a}\widetilde{E}_{a,c}(u)D^{\prime}_c(u)\widetilde{F}_{c,a}(u),\\
 \label{t12} T_{a,b}(u)&=&D_a(u)E_{a,b}(u)+\sum_{c<a}F_{a,c}(u)D_c(u)E_{c,b}(u),\\
 \label{t21} T_{b,a}(u)&=&F_{b,a}(u)D_a(u)+\sum_{c<a}F_{b,c}(u)D_c(u)E_{c,a}(u),\\
 \label{t'12} T_{a,b}^{\prime}(u)&=&\widetilde E_{a,b}(u)D^{\prime}_b(u)+\sum_{c>b}\widetilde E_{a,c}(u)D^{\prime}_c(u)\widetilde F_{c,b}(u),\\
 \label{t'21} T_{b,a}^{\prime}(u)&=&D^{\prime}_b(u)\widetilde F_{b,a}(u)+\sum_{c>b}\widetilde E_{b,c}(u)D^{\prime}_c(u)\widetilde F_{c,a}(u),
\end{eqnarray}
for all $1\leq a\leq m+n$ in (\ref{t11}), (\ref{t'11}) and $1\leq a<b\leq m+n$ in (\ref{t12})$-$(\ref{t'21}). Here
$T_{a,b}^{\prime}(u)$ denotes the $\mu_a\times\mu_b$-matrices in the $(a,b)$-th block position of $T(u)^{-1}$, $T^{\prime}_{a,b;i,j}(u)$ denotes the $(i,j)$-th entry of $T_{a,b}^{\prime}(u)$, $T^{\prime (r)}_{a,b;i,j}$ denotes the coefficient of $u^{-r}$ in $T^{\prime}_{a,b;i,j}(u)$ and
\begin{eqnarray*}
\widetilde{E}_{a,b}(u)&:=&\sum_{a=i_0<i_1<\ldots<i_s=b}(-1)^sE_{a,i_1}(u)E_{i_1,i_2}(u)\cdots E_{i_{s-1},b}(u),\\
\widetilde{F}_{b,a}(u)&:=&\sum_{a=i_0<i_1<\ldots<i_s=b}(-1)^sF_{b,i_{s-1}}(u)F_{i_{s-1},i_{s-2}}(u)\cdots F_{i_{1},a}(u).
\end{eqnarray*}
In fact, (\ref{sze}) and (\ref{szf}) are the special cases when $b=a+1$ of the following more general relations.
\begin{eqnarray}
\zeta_{M|N}\big(E_{a,b;ij}(u)\big)&=&\widetilde{F}_{m+n+1-a,m+n+1-b;\mu_a+1-i,\mu_b+1-j}(u)\label{ze},\\
\zeta_{M|N}\big(F_{b,a;ij}(u)\big)&=&\widetilde{E}_{m+n+1-b,m+n+1-a;\mu_b+1-i,\mu_a+1-j}(u)\label{zf}.
\end{eqnarray}
One can easily derive (\ref{zd}), (\ref{ze}) and (\ref{zf}) simultaneously by induction on $a$.
\end{proof}

Now we describe the relations among the $D$'s. We first claim that
\[
[D_{a;i,j}(u),D_{b;h,k}(v)]=0,\qquad\text{ unless }\qquad a=b.
\]
Assume $a<b$. For $1\leq a\leq m$, there exists a suitable number $1\leq k\leq M$ such that $D_{a;i,j}(u)$ is contained in the north-western $k\times k$ corner of $Y(\gl_{M|N})[[u^{-1}]]$, i.e., 
\[
D_{a;i,j}(u)\in Y(\gl_k)[[u^{-1}]]\subset Y(\gl_{M|N})[[u^{-1}]]
\]
and
\[
D_{b;h,k}(v)\in \psi_k\big(Y(\gl_{M-k|N})\big)[[v^{-1}]]\subset Y(\gl_{M|N})[[v^{-1}]].
\]
Hence they supercommute by Lemma \ref{corcommute}. For $m+1\leq a\leq m+n$, we may apply the swap map $\zeta_{M|N}$ first then it is transformed to the above case in the super Yangian $Y(\gl_{N|M})$ and our claim follows.

We next compute the bracket explicitly when $a=b$. For $1\leq a\leq m$, by (\ref{psid}) and (\ref{Dtident}), we have
\begin{align*}
[D_{a;i,j}(u),D_{a;h,k}(v)]=&\psi_{\mu_1+\mu_2+\ldots +\mu_{a-1}}\big([D_{1;i,j}(u),D_{1;h,k}(v)]\big)\\
=&\psi_{\mu_1+\mu_2+\ldots +\mu_{a-1}}\big([t_{ij}(u),t_{hk}(v)]\big).
\end{align*}
For $m+1\leq a\leq m+n$, we set $\tilde{a}:=m+n+1-a$. 
Then we have $1\leq \tilde{a}\leq n$ and hence
\begin{align*}
D_{a;ij}(u)
&=\zeta_{N|M}\big( D_{\tilde{a};\mu_{\tilde{a}}+1-i,\mu_{\tilde{a}}+1-j}(u)  \big)\\
&=\zeta_{N|M}\circ\psi_{\mu_1+\mu_2+\ldots+\mu_{\tilde{a}-1}}
\big( D_{1;\mu_{\tilde{a}}+1-i,\mu_{\tilde{a}}+1-j}(u)  \big)\\
&=\zeta_{N|M}\circ\psi_{\mu_1+\mu_2+\ldots+\mu_{\tilde{a}-1}}
\big( t_{\mu_{\tilde{a}}+1-i,\mu_{\tilde{a}}+1-j}(u)\big).
\end{align*}
Therefore, for $m+1\leq a\leq m+n$, we have
\begin{align*}
[D_{a;i,j}(u),D_{a;h,k}(v)]=
&\,\zeta_{N|M}\circ\psi_{\mu_1+\mu_2+\ldots+\mu_{\tilde{a}-1}}\big([t_{\mu_{\tilde{a}}+1-i,\mu_{\tilde{a}}+1-j}(u),t_{\mu_{\tilde{a}}+1-h,\mu_{\tilde{a}}+1-k}(v)]\big).
\end{align*}
Referring to the definition (\ref{ydefs}), for any $1\leq a\leq m+n$, we have
\begin{equation*}
[D_{a;i,j}(u),D_{a;h,k}(v)]=\dfrac{1}{u-v}\big( D_{a;h,j}(u),D_{a;i,k}(v)-D_{a;h,j}(v),D_{a;i,k}(u)\big).
\end{equation*}
Collecting the coefficients of $u^{-r}v^{-s}$, we have proved the following proposition, which is parallel to the results in \cite[Section 4]{BK1}.

\begin{proposition}\label{MNdd0}
The relations among the elements
$\{D_{a;i,j}^{(r)},D_{a;i,j}^{\prime (r)}\}$ for all $r\geq 0$, ${1\leq i,j\leq \mu_a}$, $1\leq a\leq m+n$
are given by

\begin{eqnarray*}
    \label{D1}D_{a;i,j}^{(0)}=\delta_{ij}\:,\\
    \label{D2}\sum_{t=0}^{r}D_{a;i,p}^{(t)}D_{a;p,j}^{\prime (r-t)}=\delta_{r0}\delta_{ij}\:,
\end{eqnarray*}
\begin{equation*}\label{D3}
   [D_{a;i,j}^{(r)},D_{b;h,k}^{(s)}]=
    \delta_{ab}\sum_{t=0}^{min(r,s)-1}\big(D_{a;h,j}^{(t)}D_{a;i,k}^{(r+s-1-t)}
     -D_{a;h,j}^{(r+s-1-t)}D_{a;i,k}^{(t)}\big),
\end{equation*}
and these elements generate a subalgebra of $Y(\gl_{M|N})$.
\end{proposition}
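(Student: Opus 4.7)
The plan is to read off all three relations directly from the Gauss decomposition and the intertwining maps $\psi_k$ and $\zeta_{M|N}$ established above; in fact the discussion in the half page preceding the proposition has already set up most of what is needed. The zeroth-order identity $D_{a;i,j}^{(0)}=\delta_{ij}$ is immediate: since $F(u)$ and $E(u)$ are unitriangular with constant term $I$, and $T(u)$ has constant term $I$, the same must hold for $D(u)$. The second relation is nothing but the matrix identity $D_a(u)D_a'(u)=I_{\mu_a}$ read off as the coefficient of $u^{-r}$.

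For the commutator $[D_{a;i,j}^{(r)},D_{b;h,k}^{(s)}]$ I separate into the cases $a\neq b$ and $a=b$. In the case $a\neq b$, assume without loss of generality $a<b$. If $a\leq m$, set $k:=\mu_1+\cdots+\mu_a\leq M$. From the quasideterminant formula of Proposition \ref{quasi} one sees that $D_{a;i,j}(u)$ lies in the NW corner subalgebra $Y(\gl_k)$, while $D_{b;h,k}(v)$ lies in $\psi_k(Y(\gl_{M-k|N}))$; Lemma \ref{corcommute} then gives supercommutativity, which is exactly what the $\delta_{ab}$ on the right-hand side of the stated commutator formula encodes. If instead $a>m$ (so both blocks sit in the odd range), I first apply the algebra isomorphism $\zeta_{M|N}$ of Proposition \ref{zetadef} and reduce to the preceding case inside $Y(\gl_{N|M})$.

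In the case $a=b$, I use identities (\ref{psid}) and (\ref{Dtident}) to write
\[
D_{a;i,j}(u) = \psi_{\mu_1+\cdots+\mu_{a-1}}\bigl(t_{i,j}(u)\bigr)
\]
when $a\leq m$, or, setting $\tilde a := m+n+1-a$,
\[
D_{a;i,j}(u) = \zeta_{N|M}\circ\psi_{\mu_1+\cdots+\mu_{\tilde a-1}}\bigl(t_{\mu_{\tilde a}+1-i,\mu_{\tilde a}+1-j}(u)\bigr)
\]
when $a>m$. Since $\psi_k$ and $\zeta_{N|M}$ are algebra (iso)morphisms, the commutator of the $D$'s is the image under these maps of the corresponding commutator of $t$'s, to which I apply relation (\ref{ydefs}). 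The key observation is that by construction all $t$-indices on the right lie in the even half of the ambient super Yangian: for $a\leq m$ they already sit in the $M$-part of $Y(\gl_{M|N})$, while for $a>m$ the preliminary use of $\zeta_{N|M}$ deliberately relocates them to the even part of $Y(\gl_{N|M})$ (since $\tilde a\leq n$ in the composition $\mu^r$). Consequently the sign factor $(-1)^{\pa{i}\pa{j}+\pa{i}\pa{h}+\pa{j}\pa{h}}$ in (\ref{ydefs}) is $+1$ throughout, and extracting coefficients of $u^{-r}v^{-s}$ from the resulting series identity yields the stated formula.

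The subalgebra claim is then automatic: the right-hand sides of all three relations are polynomial in the $D_{c;\cdot,\cdot}^{(t)}$ and $D_{c;\cdot,\cdot}^{\prime(t)}$, so the linear span of monomials in these elements is closed under multiplication. The only delicate point in the whole argument is the sign bookkeeping in the $a=b$ case, which is precisely why one must route the odd blocks through $\zeta_{N|M}$ before applying $\psi$, rather than attempt a direct computation inside $Y(\gl_{M|N})$.
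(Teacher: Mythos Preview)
Your proof is correct and follows essentially the same route as the paper's own argument, which is laid out in the paragraphs immediately preceding the proposition: the same separation into $a\neq b$ (handled via Lemma~\ref{corcommute} after a possible application of $\zeta_{M|N}$) and $a=b$ (handled by pulling back through $\psi$ and, when $a>m$, first through $\zeta_{N|M}$, then invoking (\ref{ydefs})). Your explicit remark that the sign in (\ref{ydefs}) collapses to $+1$ because the relevant $t$-indices are forced into the even range is a useful clarification that the paper leaves implicit.
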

 We call the subalgebra in Proposition \ref{MNdd0} the $standard$ $Levi$ $subalgebra$ of $Y(\gl_{M|N})$ associated to $\mu$ and denote it by $Y_{\mu}^0$. Note that in the special case when all $\mu_i=1$, the subalgebra $Y^0_{(1,\ldots,1)}$ is commutative.

\section{Special Cases: non-super case and m=n=1}
The following theorem of Brundan and Kleshchev describes the relations among the generators in the non-super case.
\begin{theorem}\label{bknonsuper}\cite[Theorem A]{BK1}
Let $\lambda=(\lambda_1,\lambda_2,\ldots,\lambda_m)$ be a composition of $M$. The following identities hold in
$Y(\gl_M)((u^{-1},v^{-1}))$ for all admissible $a,b,f,g,h,i,j,k$:.
\begin{align*}
(u-v)[D_{a;i,j}(u), E_{b;h,k}(v)]
        &=\delta_{a,b}\delta_{h,j}D_{a;i,p}(u)\big(E_{a;p,k}(v)-E_{a;p,k}(u)\big)\\
        &\qquad\qquad -\delta_{a,b+1}D_{a;i,k}(u)\big(E_{b;h,j}(v)-E_{b;h,j}(u)\big),\\[2mm]
(u-v)[D_{a;i,j}(u), F_{b;h,k}(v)]
        &=-\delta_{a,b}\delta_{k,i}\big(F_{b;h,p}(v)-F_{b;h,p}(u)\big)D_{a;p,j}(u)\\
        &\qquad\qquad +\delta_{a,b+1}\big(F_{b;i,k}(v)-F_{b;i,k}(u)\big)D_{a;h,j}(u),\\[2mm]
(u-v)[E_{a;i,j}(u), F_{b;h,k}(v)]
        &=\delta_{a,b}\big(D^{\prime}_{a;i,k}(u) D_{a+1;h,j}(u)-D_{a+1;h,j}(v) D^{\prime}_{a;i,k}(v)\big),\\[2mm]
(u-v)[E_{a;i,j}(u), E_{a;h,k}(v)]
        &=\big(E_{a;i,k}(u)-E_{a;i,k}(v)\big)\big(E_{a;h,j}(u)-E_{a;h,j}(v)\big),\\[2mm]
(u-v)[F_{a;i,j}(u), F_{a;h,k}(v)]
        &=\big(F_{a;i,k}(u)-F_{a;i,k}(v)\big)\big(F_{a;h,j}(u)-F_{a;h,j}(v)\big),\\[2mm]
(u-v)[E_{a;i,j}(u), E_{a+1;h,k}(v)]
        &=\delta_{h,j}\big(E_{a;i,q}(u)E_{a+1;q,k}(v)-E_{a;i,q}(v)E_{a+1;q,k}(v)\notag\\
        &\qquad\qquad\qquad\qquad\qquad +E_{a,a+2;i,k}(v)-E_{a,a+2;i,k}(u)\big),\\[2mm]
(u-v)[F_{a;i,j}(u), F_{a+1;h,k}(v)]
        &=\delta_{i,k}\big(-F_{a+1;h,q}(v)F_{a;q,j}(u)+F_{a+1;h,q}(v)F_{a;q,j}(v)\\
        &\qquad\qquad\qquad\qquad\qquad -F_{a+2,a;h,j}(v)+F_{a+2,a;h,j}(u)\big),\\[2mm]
(u-v)[E_{a;i,j}(u), E_{b;h,k}(v)]&= 0 \quad\text{ if\;\; $b>a+1$ \; or \; if \; $b=a+1$ \;and\;\; $h \neq j$},\\
(u-v)[F_{a;i,j}(u), F_{b;h,k}(v)]&= 0 \quad\text{ if\;\; $b>a+1$ \; or \; if \; $b=a+1$ \;and\;\; $i \neq k$},
\end{align*}
\begin{align*}
(u-v)\big[[E_{a;i,j}(u),E_{b;h,k}(v)],E_{b;f,g}(v)\big]&= 0\quad\:\:\text{ if $|a-b|\geq 1$},\\
(u-v)\big[E_{a;i,j}(u),[E_{a;h,k}(u),E_{b;f,g}(v)]\big]&= 0\quad\:\:\text{ if $|a-b|\geq 1$},
\end{align*}
\begin{align*}
\big[[E_{1;i,j}(u),E_{2;h,k}(v)],E_{2;f,g}(w)\big]+
\big[[E_{1;i,j}(u)&,E_{2;h,k}(w)],E_{2;f,g}(v)\big]=0\quad\text{ if $|a-b|\geq 1$},\\
\big[E_{1;i,j}(u),[E_{1;h,k}(v),E_{2;f,g}(w)]\big]+
\big[E_{1;i,j}(v),&[E_{1;h,k}(u),E_{2;f,g}(w)]\big]=0\quad\text{ if $|a-b|\geq 1$} ,
\end{align*}
where the index $p$ (resp. $q$) is summed over $1,\ldots,\lambda_a$ (resp. $1,\ldots,\lambda_{a+1}$).
\end{theorem}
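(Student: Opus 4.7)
The plan is to prove each of the listed identities by reducing to a small composition (involving at most three consecutive parts) via a shift-map embedding, and then verifying the reduced identity directly from the RTT relation together with the Gauss decomposition.

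First, I would set up the non-super analogue of the shift map $\psi_k$ from Proposition~4.1, namely $\psi_k: Y(\gl_{M-k})\hookrightarrow Y(\gl_M)$, together with the compatibility relations (\ref{psid})--(\ref{psif}) specialized to $N=0$. These show that $\psi_k$ sends the D's, E's, F's attached to a composition of $M-k$ onto the corresponding generators for the extended composition $(k,\lambda_1,\ldots,\lambda_m)$, shifted by one block position. Combined with the corner-commutativity statement (the non-super case of Lemma~\ref{corcommute}), this observation allows every relation in the theorem to be reduced to the case where only two or three consecutive blocks actually appear, and moreover we may place those blocks at the leftmost positions of the composition.

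Second, in the reduced cases I would verify the identities by direct computation. The D-D part is immediate from Proposition~\ref{MNdd0} restricted to the non-super setting. For the D-E, D-F, and E-F relations I would start from the coefficient form (\ref{ydefs}) of the RTT relation and expand each $T_{a,b}(u)$ via the matrix identities (\ref{t11})--(\ref{t'21}); pulling the triangular factors through and comparing blocks of size $\lambda_a\times\lambda_b$ yields the desired identities after some bookkeeping. The E-E and F-F relations internal to a single block $a$ are relations inside the standard Levi subalgebra $Y_\mu^0$ of Proposition~\ref{MNdd0}, which is a tensor product of ordinary $Y(\gl_{\lambda_i})$'s, so they reduce to the base case $m=1$ where the parabolic presentation coincides with the original RTT presentation.

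The main obstacle will be the Serre-type identities in the last four displays: these are triple-commutator relations that cannot be obtained from a single application of the RTT relation. My plan is to first establish them in the extreme case when all $\lambda_i=1$, where the parabolic presentation collapses to a variant of Drinfeld's new presentation and the Serre relations are classical, and then to bootstrap to arbitrary $\lambda$ using the recursive formula (\ref{ter}), which expresses the higher-block $E_{a,b;i,j}^{(r)}$ and $F_{b,a;i,j}^{(r)}$ as iterated commutators of the rank-one $E_{b-1;k,l}^{(1)}$ and $F_{b-1;k,l}^{(1)}$. Propagating the Serre relations through this iteration while keeping track of all indices is the technically demanding step; once it is done, the theorem follows by equating coefficients of the appropriate monomials in $u,v,w$ in each generating-series identity.
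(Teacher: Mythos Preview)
The paper does not prove this theorem at all: its entire ``proof'' is the sentence ``See \cite[Section 6]{BK1}'', noting only that the result is restated in series form with a mild reindexing of the $F$'s. This is a quoted result from Brundan--Kleshchev, not something established in the present paper, so there is nothing to compare your argument against here.

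That said, let me comment on your sketch on its own merits. The shift-map reduction to two- or three-block compositions is exactly the right mechanism and is how \cite{BK1} proceeds. But two points in your outline are off. First, the sentence ``the $E$-$E$ and $F$-$F$ relations internal to a single block $a$ are relations inside the standard Levi subalgebra $Y_\mu^0$'' is incorrect: $Y_\mu^0$ is generated by the $D$'s alone, and the series $E_{a;i,j}(u)$, $F_{a;i,j}(u)$ do not lie there. The same-block $[E_a,E_a]$ relation is genuinely a two-block statement (it involves the off-diagonal block between parts $a$ and $a+1$) and is obtained, after shifting, by a direct RTT computation in $Y(\gl_{\lambda_a+\lambda_{a+1}})$, not by invoking the Levi factor.

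Second, your proposed route to the Serre-type relations --- prove them first for the composition $(1,\ldots,1)$ and then ``bootstrap'' to general $\lambda$ via~(\ref{ter}) --- does not straightforwardly work. Refining a block $\lambda_a$ into $1$'s turns the coarse generator $E_{a;i,j}(u)$ into a \emph{higher} $E_{a',b'}(u)$ with $b'>a'+1$ in the fine decomposition, and the Serre relations you need are about adjacent-block $E$'s, not these iterated ones; the recursion~(\ref{ter}) goes the wrong direction for this purpose. What \cite{BK1} actually does (and what the present paper mirrors in Lemma~6.3 for the super case) is to stay in the three-block setting and derive the triple-commutator identities there by combining the already-established binary relations (notably the adjacent $[E_a,E_{a+1}]$ formula) with the super-Jacobi identity and a short symmetry argument in the variables. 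If you want a self-contained proof, that is the route to follow.
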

\begin{proof}
See \cite[Section 6]{BK1}. Here, we present the theorem in the series form and we define the indices of $F$'s in a  slightly different manner.
\end{proof}

Back to the super case. Consider $m$=$n$=1; that is, $\mu=(\mu_1\,|\,\mu_2)=(M\,|\,N)$. Since we have only one block of $E$'s and $F$'s, we may omit the block positions without confusion. That is, we set
\[
\qquad E_{i,j}(u):=E_{1;i,j}(u)=E_{1,2;i,j}(u),\qquad
\text{for all}\quad 1\leq i\leq\mu_1=M,\quad 1\leq j\leq\mu_2=N,
\]
\[
\text{and}\;\; F_{i,j}(u):=F_{1;i,j}(u)=F_{2,1;i,j}(u),\qquad \text{for all}\quad 1\leq i\leq\mu_2=N,\quad 1\leq j\leq\mu_1=M.
\]
The relations among them are given in the following proposition, which is a generalization of \cite[Lemma 6.3]{BK1}.
\begin{proposition}\label{1stsuper}
The following identities hold in $Y(\gl_{M|N})((u^{-1},v^{-1}))$.
\begin{eqnarray}
\label{mnde}(u-v)[D_{a;i,j}(u),E_{h,k}(v)]&=&
                  \left\{\begin{array}{ll}
        \delta_{hj}D_{1;i,p}(u)\big(E_{p,k}(v)-E_{p,k}(u)\big), &{\text{ if }}\;a=1,\\[3mm]
        D_{2;i,k}(u)\big(E_{h,j}(v)-E_{h,j}(u)\big), &{\text{ if }}\;a=2,\\
       \end{array}\right.\\[2mm]
\label{mndf}(u-v)[D_{a;i,j}(u),F_{h,k}(v)]&=&
                  \left\{\begin{array}{ll}
        \delta_{ki}\big(F_{h,p}(u)-F_{h,p}(v)\big)D_{1;p,j}(u), &{\text{ if }}\;a=1,\\[3mm]
        \big(F_{i,k}(u)-F_{i,k}(v)\big)D_{2;h,j}(u), &{\text{ if }}\;a=2,\\
       \end{array}\right.\\[2mm]
\label{mnef}(u-v)[E_{i,j}(u),F_{h,k}(v)]&=&D^{\prime}_{1;i,k}(v)D_{2;h,j}(v)-D_{2;h,j}(u)D^{\prime}_{1;i,k}(u),\\
\label{mnee}(u-v)[E_{i,j}(u),E_{h,k}(v)]&=&\big(E_{i,k}(u)-E_{i,k}(v)\big)\big(E_{h,j}(v)-E_{h,j}(u)\big),\\
\label{mnff}(u-v)[F_{i,j}(u),F_{h,k}(v)]&=&\big(F_{i,k}(u)-F_{i,k}(v)\big)\big(F_{h,j}(v)-F_{h,j}(u)\big),
\end{eqnarray}
for all admissible $i,j,h,k$ and the index $p$ is summed over $1,\ldots, M$.
\end{proposition}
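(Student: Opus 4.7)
The proposition is the super analog of \cite[Lemma~6.3]{BK1} specialized to the two-block composition $\mu=(M\,|\,N)$, and my plan is to follow the pattern of that non-super proof while carefully tracking the $\mathbb{Z}_2$-grading signs. The setup is ideal: block $1$ is purely even and block $2$ purely odd, so $D_1,D_2$ are even while $E,F$ are odd. Consequently the supercommutators on the left of (\ref{mnef}), (\ref{mnee}) and (\ref{mnff}) are anticommutators, whereas those in (\ref{mnde}) and (\ref{mndf}) are ordinary commutators. The general strategy is to invoke the generating-series RTT relation (\ref{ydefs}) on one well-chosen pair $t_{ij}(u),t_{hk}(v)$ for each identity and translate into the $D,E,F$-language via the Gauss decomposition $T=FDE$.

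The translation dictionary is obtained by multiplying out $T=FDE$ block-wise, which yields $T_{1,1}=D_1$, $T_{1,2}=D_1 E$, $T_{2,1}=FD_1$ and $T_{2,2}=FD_1E+D_2$. Thus $D_{1;i,j}(u)=t_{i,j}(u)$ for $1\le i,j\le M$, and
\[
E_{p,k}(u)=\sum_{i=1}^{M} D^{\prime}_{1;p,i}(u)\,t_{i,M+k}(u),\qquad
F_{h,q}(u)=\sum_{j=1}^{M} t_{M+h,j}(u)\,D^{\prime}_{1;j,q}(u),
\]
while $D_{2;h,k}(u)=t_{M+h,M+k}(u)-\sum_{p,q}t_{M+h,p}(u)D^{\prime}_{1;p,q}(u)t_{q,M+k}(u)$. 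The $D_1$-commutation rules from Proposition~\ref{MNdd0} then let me freely commute $D^{\prime}_1(u)$-factors past $D_1(v)$ and past $t_{i,j}(v)$ with $i,j\le M$, at the cost of explicit $(u-v)^{-1}$ corrections, which is what makes the substitutions manageable.

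I now proceed case by case. For (\ref{mnde}) and (\ref{mndf}) with $a=1$, bracket $t_{i,j}(u)$ against $t_{h,M+k}(v)$ or $t_{M+h,k}(v)$; the parity factor $(-1)^{\pa{i}\pa{j}+\pa{i}\pa{h}+\pa{j}\pa{h}}$ in (\ref{ydefs}) is $+1$, and after multiplying by an appropriate $D^{\prime}_1(v)$-factor and using Proposition~\ref{MNdd0} to move $D^{\prime}_1$ across, the claimed formulas drop out. For $a=2$, I use $D_2(u)=T_{2,2}(u)-F(u)T_{1,2}(u)$ and compute the bracket in pieces, again with parity sign $+1$. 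For (\ref{mnee}) and (\ref{mnff}), bracket two off-diagonal entries $t_{i,M+j}(u),t_{h,M+k}(v)$ or $t_{M+i,j}(u),t_{M+h,k}(v)$; the parity sign is again $+1$, but because the chosen $t$'s are odd the left hand side of (\ref{ydefs}) is already an anticommutator, directly producing the anticommutator of two $E$'s (resp.\ $F$'s) on the left of the target identity.

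The hardest case is (\ref{mnef}): bracket $t_{i,M+j}(u)$ with $t_{M+h,k}(v)$; here the parity factor evaluates to $(-1)^{0\cdot 1+0\cdot 1+1\cdot 1}=-1$, and this is precisely the sign that converts what would be a non-super commutator identity into the correct odd-odd anticommutator identity. After substituting $t_{i,M+j}=D_1 E$, $t_{M+h,k}=FD_1$, $t_{M+h,M+j}=FD_1E+D_2$, and $t_{i,k}=D_1$, and reorganizing the resulting products using the known $D_1$-brackets of Proposition~\ref{MNdd0}, the $FD_1E$-contributions coming from $t_{M+h,M+j}$ must cancel against the corresponding pieces from the anticommutator on the left, leaving exactly the compact combination $D^{\prime}_{1;i,k}(v)D_{2;h,j}(v)-D_{2;h,j}(u)D^{\prime}_{1;i,k}(u)$ on the right. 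Controlling this cancellation simultaneously with the sign accounting in the anticommutator is the main technical obstacle; once (\ref{mnef}) is in hand the remaining four identities follow by parallel, but considerably simpler, computations.
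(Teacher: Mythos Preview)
Your treatment of (\ref{mnde}) and (\ref{mndf}) matches the paper's: it too defers these to the computation in \cite[Lemma~6.3]{BK1}, and your description of bracketing $t_{i,j}(u)$ against $t_{h,M+k}(v)$ (resp.\ $t_{M+h,k}(v)$) and stripping off the $D_1$-factors is exactly that argument.

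For the remaining three identities, however, your route diverges from the paper's in a way worth flagging. You propose to work exclusively with the relation (\ref{ydefs}) between two $t$'s. The paper instead uses the mixed relation (\ref{usefull}) between a $t$ and a $t'$: for (\ref{mnef}) it starts from $[t_{i,M+j}(u),t'_{M+h,k}(v)]$, and for (\ref{mnee}) from the \emph{vanishing} bracket $[t_{i,M+j}(u),t'_{h,M+k}(v)]=0$. The payoff is that the inverse-matrix Gauss decomposition $T^{-1}=E^{-1}D'F^{-1}$ gives $t'_{M+h,k}=-D'_{2}F$ and $t'_{h,M+k}=-ED'_{2}$, so the $D'_2$-factor sits on the outside and can be peeled away after one commutation; your choice $t_{M+h,k}=FD_1$, $t_{h,M+k}=D_1E$ puts $D_1$ in the middle and forces you to disentangle a four-factor product on each side. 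Your approach should still go through, but the claim that bracketing $t_{i,M+j}(u)$ with $t_{h,M+k}(v)$ ``directly produces the anticommutator of two $E$'s'' is too optimistic: even with its cleaner starting point the paper only arrives at an identity of the shape
\[
(u-v)^2[E_{i,j}(u),E_{h,k}(v)]=(u-v)(\cdots)+\big(E_{i,j}(u)-E_{i,j}(v)\big)\big(E_{h,k}(v)-E_{h,k}(u)\big),
\]
and then needs a separate degree-by-degree induction to divide out the extra $(u-v)$ and reach (\ref{mnee}). You should expect at least the same obstacle. Finally, the paper does not repeat the computation for (\ref{mnff}) but obtains it in one line by applying the swap map $\zeta_{N|M}$ to (\ref{mnee}); this shortcut is available to you as well.
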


\begin{proof}
As in the proof of Proposition \ref{zetadef}, we compute the matrix product
\[T(u)=F(u)D(u)E(u)\qquad \text{and}\qquad T^{-1}(u)=E^{-1}(u)D^{\prime}(u)F^{-1}(u)\]
with respect to the composition $\mu=(M\,|\,N)$ and get the following identities.
\begin{eqnarray}
\label{541}t_{i,j}(u)&=&D_{1;i,j}(u), \qquad\qquad\qquad\quad\;\text{ for all }1\le i,j\le M,\\
\label{542}t_{i,M+j}(u)&=&D_{1;i,p}E_{p,j}(u),\qquad\qquad\quad\;\;\text{ for all } 1\le i\le M, 1\le j\le N,\\
\label{543}t_{M+i,j}(u)&=&F_{i,p}(u)D_{1;p,j}(u),\qquad\quad\quad\;\text{ for all } 1\le i\le N, 1\le j\le M,\\
\label{544}t_{M+i,M+j}(u)&=&F_{i,p}(u)D_{1;p,q}(u)E_{q,j}(u)+D_{2;i,j}(u),\quad\text{ for all } 1\le i,j\le N,\\
\label{545}t^{\prime}_{i,j}(u)&=&D^{\prime}_{1;i,j}(u)+E_{i,p'}(u)D^{\prime}_{2;p',q'}(u)F_{q',j}(u),\text{ for all } 1\le i,j\le M,\\
\label{546}t^{\prime}_{i,M+j}(u)&=&-E_{i,p'}(u)D^{\prime}_{2;p',j}(u),\qquad\quad\text{ for all } 1\le i\le M, 1\le j\le N,\\
\label{547}t^{\prime}_{M+i,j}(u)&=&-D^{\prime}_{2;i,p'}(u)F_{p',j}(u),\qquad\quad\,\text{ for all } 1\le i\le N, 1\le j\le M,\\
\label{548}t^{\prime}_{M+i,M+j}(u)&=&D^{\prime}_{2;i,j}(u),\qquad\qquad\qquad\quad\;\,\text{ for all } 1\le i,j\le N,
\end{eqnarray}
where the indices $p,q$ (resp. $p',q'$) are summed over $1,\ldots,M$ (resp. $1,\ldots,N$).

(\ref{mnde}) and (\ref{mndf}) can be proved using exactly the same method as in \cite[Lemma~6.3]{BK1} and hence we skip the detail.

To establish (\ref{mnef}), we need other identities. Computing the brackets in (\ref{mnde}) in the case $a=2$ and (\ref{mndf}) in the case $a=1$ and changing the indices, we have
\begin{equation}
(u-v)E_{\alpha ,j}(u)D^{\prime}_{2;h,\beta}(v)
  -\delta_{hj}\big(E_{\alpha ,q}(v)-E_{\alpha ,q}(u)\big)D^{\prime}_{2;q,\beta}(v)
=(u-v)D^{\prime}_{2;h,\beta}(v)E_{\alpha ,j}(u),
\end{equation}
\begin{equation}
-(u-v)F_{\beta ,k}(v)D_{1;i,\alpha}(u)
  +\delta_{ki}\big(F_{\beta ,p}(v)-F_{\beta ,p}(u)\big)D_{1;p,\alpha}(u)
=-(u-v)D_{1;i,\alpha}(u)F_{\beta ,k}(v),
\end{equation}
where $\alpha$, $p$ (resp. $\beta$, $q$) are summed over $1,\ldots,M$ (resp. $1,\ldots, N$).

By (\ref{usefull}), we have
\[
(u-v)[t_{i,M+j}(u),t^{\prime}_{M+h,k}(v)]=-\big(\delta_{hj}\sum_{l=1}^{M+N}t_{il}(u)t^{\prime}_{lk}(v)
-\delta_{ki}\sum_{s=1}^{M+N}t^{\prime}_{M+h,s}(v)t_{s,M+j}(u)\big).
\]
Substituting by (\ref{541})$-$(\ref{548}) and changing the indices, we may rewrite the above identity as the following
\begin{align}\notag
&D_{1;i,\alpha}(u)\big\lbrace(u-v)E_{\alpha ,j}(u)D^{\prime}_{2;h,\beta}(v)-\delta_{hj}\big(E_{\alpha ,q}(v)-E_{\alpha ,q}(u)\big)D^{\prime}_{2;q,\beta}(v)\big\rbrace F_{\beta ,k}(v)\\\notag
&\qquad-\delta_{hj}D_{1;i,\alpha}(u)D^{\prime}_{1;\alpha ,h}(v)\\\notag
&=D^{\prime}_{2;h,\beta}(v)\big\lbrace -(u-v)F_{\beta ,k}(v)D_{1;i,\alpha}(u)+\delta_{ki}\big(F_{\beta ,p}(v)-F_{\beta ,p}(u)\big)D_{1;p,\alpha}(u)\big\rbrace E_{\alpha ,j}(u)\\
&\qquad\qquad-\delta_{ki}D^{\prime}_{2;h,\beta}(v)D_{2;\beta ,j}(u),
\end{align}
where $\alpha$, $p$ (resp. $\beta$, $q$) are summed over $1,\ldots,M$(resp. $1,\ldots,N$). Substituting (5.14) and (5.15) into (5.16), we obtain
\begin{multline}
D_{1;i,\alpha}(u)\big\lbrace (u-v)D^{\prime}_{2;h,\beta}(v)E_{\alpha ,j}(u)F_{\beta ,k}(v)\big\rbrace
-\delta_{hj}D_{1;i,\alpha}(u)D^{\prime}_{1;\alpha ,k}(v)=\\
D^{\prime}_{2;h,\beta}(v)\big\lbrace -(u-v)D_{1;i,\alpha}(u)F_{\beta ,k}(v)E_{\alpha ,j}(u)\big\rbrace
-\delta_{ki}D^{\prime}_{2;h,\beta}(v)D_{2;\beta ,j}(u).
\end{multline}
Multiplying $D_2(v)D^{\prime}_{1}(u)$ from the left on both sides of (5.17), we obtain (\ref{mnef}).

For (\ref{mnee}), we start with $[t_{i,M+j}(u), t^{\prime}_{h,M+k}(v)]=0.$ Note that they are both odd elements.
Multiplying $(u-v)^2$ and computing the bracket after substitution by (\ref{542}) and (\ref{546}), we have
\begin{align}\notag
&(u-v)^2D_{1;i,p}(u)E_{p,j}(u)E_{h,q}(v)D^{\prime}_{2;q,k}(v)+\qquad\qquad\qquad\qquad\qquad\qquad\qquad\\
&\qquad\qquad\qquad(u-v)E_{h,q}(v)D_{1;i,p}(u)(u-v)D^{\prime}_{2;q,k}(v)E_{p,j}(u)=0.
\end{align}
Rewriting (\ref{mnde}) again, we have the following identities
\begin{align*}
(u-v)E_{h,q}(v)D_{1;i,p}(u)&=&(u-v)D_{1;i,p}(u)E_{h,q}(v)+ \delta_{hp}D_{1;i,p}(u)\big(E_{p,q}(u)-E_{p,q}(v)\big),\\
(u-v)D^{\prime}_{2;q,k}(v)E_{p,j}(u)&=&(u-v)E_{p,j}(u)D^{\prime}_{2;q,k}(v)+ \delta_{jq}\big(E_{p,q}(u)-E_{p,q}(v)\big)D^{\prime}_{2;q,k}(v).
\end{align*}
Substituting these two into the second term in (5.18) and multiplying $D_1(u)$ from the left, $D_2(v)$ from the right simultaneously, we obtain
\begin{multline}
(u-v)^2[E_{i,j}(u),E_{h,k}(v)]
=(u-v)E_{h,j}(v)\big(E_{i,k}(v)-E_{i,k}(u)\big)\\
+(u-v)\big(E_{i,k}(v)-E_{i,k}(u)\big)E_{h,j}(u)
+\big(E_{i,j}(u)-E_{i,j}(v)\big)\big(E_{h,k}(v)-E_{h,k}(u)\big).
\end{multline}
For a power series $P$ in $Y(\gl_{M|N})[[u^{-1}, v^{-1}]]$, we write $\big\lbrace P\big\rbrace_{d}$ for the homogeneous component of $P$ of total degree $d$ in the variables $u^{-1}$ and $v^{-1}$. (\ref{mnee}) follows from the following claim.

{\bf{Claim:}} For $d\ge 1$, we have
\[
(u-v)\big\lbrace [E_{i,j}(u), E_{h,k}(v)]\big\rbrace_{d+1}=\big\lbrace \big(E_{i,k}(u)-E_{i,k}(v)\big)\big(E_{h,j}(v)-E_{h,j}(u)\big)\big\rbrace_d.
\]

We prove the claim by induction on $d$.
For $d=1$, we take $\big\lbrace\:\big\rbrace_0$ on (5.19), and it implies
\[
(u-v)^2\big\lbrace [E_{i,j}(u),E_{h,k}(v)]\big\rbrace_2=0.
\]
Note that the right hand side of (5.19) is zero when $u=v$, hence we may divide both sides by $(u-v)$ and therefore $(u-v)\big\lbrace [E_{i,j}(u),E_{h,k}(v)]\big\rbrace_2=0$, as desired.
\\
Assume the claim is true for some $d>1$. By the hypothesis, we have
\begin{equation}
(u-v)\big\lbrace[E_{h,j}(u), E_{i,k}(v)]\big\rbrace_{d+1}=\big\lbrace\big(E_{h,k}(u)-E_{h,k}(v)\big)\big(E_{i,j}(v)-E_{i,j}(u)\big)\big\rbrace_d.
\end{equation}
\begin{equation*}\Longrightarrow \big\lbrace[E_{h,j}(u), E_{i,k}(v)]\big\rbrace_{d+1}=\Big\lbrace \frac{\big(E_{h,k}(u)-E_{h,k}(v)\big)\big(E_{i,j}(v)-E_{i,j}(u)\big)}{u-v}\Big\rbrace_d.\end{equation*}
Note that the right hand side is zero when $u=v$. Hence $\big\lbrace[E_{h,j}(v), E_{i,k}(v)]\big\rbrace_{d+1}=0$, which implies
\begin{equation}
E_{h,j}(v)E_{i,k}(v)=-E_{i,k}(v)E_{h,j}(v).
\end{equation}
Take $\big\lbrace\:\big\rbrace_d$ on (5.19):
\begin{align*}
(u-v)^2\big\lbrace [E_{i,j}(u), E_{h,k}(v)]\big\rbrace_{d+2}
&=(u-v)\big\lbrace E_{h,j}(v)\big(E_{i,k}(v)-E_{i,k}(u)\big)\big\rbrace_{d+1}\\
&\quad +(u-v)\big\lbrace \big(E_{i,k}(v)-E_{i,k}(u)\big)E_{h,j}(u)\big\rbrace_{d+1}\\
&\quad +\big\lbrace\big(E_{i,j}(u)-E_{i,j}(v)\big)\big(E_{h,k}(v)-E_{h,k}(u)\big)\big\rbrace_d.
\end{align*}
Substituting the last term by (5.20) and simplifying the result, we have
\begin{multline*}
(u-v)^2\big\lbrace [E_{i,j}(u), E_{h,k}(v)]\big\rbrace_{d+2}=\\
(u-v)\big\lbrace E_{h,j}(v)E_{i,k}(v)+E_{i,k}(u)E_{h,j}(v)
+\big(E_{i,k}(v)-E_{i,k}(u)\big)E_{h,j}(u)\big\rbrace_{d+1}.
\end{multline*}
Substituting by (5.21) into the above identity, we have
\begin{align*}
&(u-v)^2\big\lbrace [E_{i,j}(u), E_{h,k}(v)]\big\rbrace_{d+2}\\
&=(u-v)\big\lbrace \big(E_{i,k}(u)-E_{i,k}(v)\big)E_{h,j}(v)-\big(E_{i,k}(u)-E_{i,k}(v)\big)E_{h,j}(u)\big\rbrace_{d+1}\\
&=(u-v)\big\lbrace \big(E_{i,k}(u)-E_{i,k}(v)\big)\big(E_{h,j}(v)-E_{h,j}(u)\big)\big\rbrace_{d+1}.
\end{align*}
Dividing both sides by $u-v$ establishes the claim.

(\ref{mnff}) follows from applying the map $\zeta_{N|M}$ to (\ref{mnee}) in $Y(\gl_{N|M})[[u^{-1},v^{-1}]]$ with suitable indices.
\end{proof}

\section{Special Case: m=2, n=1}
Recall that $m$ is the number of parts of the composition of $M$ and $n$ is the number of parts of the composition of $N$. In the case when $m=2$, $n=1$, $\mu=(\mu_1,\mu_2\,|\,\mu_3)$, where $\mu_1+\mu_2=M$ and $\mu_3=N$. The relations among $E_{a;i,j}(u)$ and $F_{b;h,k}(u)$ in different blocks are obtained by the following lemma, which is a generalization of \cite[Lemma~6.4]{BK1} and \cite[Lemma~3]{Go}.

Before stating and proving the lemma, we first set a notation for the remaining part of this article. We denote the super Yangian by the notation \[Y_{\mu}:=Y(\gl_{M|N})\] to emphasize how we decompose the matrix $T(u)$ into block matrices according to the composition $\mu$ of $(M|N)$ and how those $D$'s, $E$'s and $F$'s are defined. Moreover, by abuse of notations, we will consider the $D$'s, $E$'s and $F$'s in different super Yangians at the same time. It should be clear from the context which super Yangian we are dealing with. 
\begin{lemma}\label{mmn}
The following identities hold in $Y_{(\mu_1,\mu_2|\mu_3)}((u^{-1},v^{-1}))$ for all admissible $g,h,i,j,k$.
\begin{itemize}
\item[(a)] $[E_{1;i,j}(u), F_{2;h,k}(v)] = 0$,
\item[(b)] \hspace*{-2mm}$[E_{1;i,j}(u), E_{2;h,k}(v)] =
\dfrac{\delta_{hj}}{u-v}\lbrace\big(E_{1;i,q}(u)-E_{1;i,q}(v)\big)E_{2;q,k}(v)
+ E_{1,3;i,k}(v) - E_{1,3;i,k}(u)\rbrace$,
\item[(c)]
$[E_{1,3;i,j}(u), E_{2;h,k}(v)] = E_{2;h,j}(v) [E_{1;i,g}(u), E_{2;g,k}(v)]$,
\item[(d)]
$[E_{1;i,j}(u), E_{1,3;h,k}(v) - E_{1;h,q}(v) E_{2;q,k}(v)]
= -[E_{1;i,g}(u), E_{2;g,k}(v)] E_{1;h,j}(u)$.
\end{itemize}
Here, $q$ is summed over $1,\dots,\mu_2$ and $g$ could be any number in $\lbrace 1,2,\ldots,\mu_{2}\rbrace$.
\end{lemma}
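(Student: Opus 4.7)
The plan is to follow the structure of the Brundan--Kleshchev proof of \cite[Lemma~6.4]{BK1} and of \cite[Lemma~3]{Go}, upgrading every computation to the $\mathbb{Z}_2$-graded setting. Because the composition $(\mu_1,\mu_2|\mu_3)$ respects the grading (blocks $1,2$ are even, block $3$ is odd), the sign $(-1)^{\pa{i}\pa{j}+\pa{i}\pa{h}+\pa{j}\pa{h}}$ in the defining relation~(\ref{ydefs}) is constant on each pair of blocks, so the signs become bookkeeping determined entirely by the block positions. Throughout, the main computational engines will be the RTT relations (\ref{ydefs}) and (\ref{usefull}), together with the explicit matrix identities one obtains by multiplying out $T(u)=F(u)D(u)E(u)$ for a three-block composition; these are the three-block analogues of (5.6)--(5.13). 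The key identities I would write down first are
\[
E_{1}(u)=D_{1}'(u)T_{1,2}(u),\qquad F_{2}(u)=\bigl(T_{3,2}(u)-T_{3,1}(u)D_{1}'(u)T_{1,2}(u)\bigr)D_{2}'(u),
\]
together with the corresponding expressions for $E_{2}(u)$, $E_{1,3}(u)$, and for the inverse entries $t'_{ij}(u)$.

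For part~(a), I would substitute the above formulas for $E_{1}(u)$ and $F_{2}(u)$ into $[E_{1;i,j}(u),F_{2;h,k}(v)]$ and use (\ref{ydefs})/(\ref{usefull}) to reduce to a sum of brackets among entries of $T(u)$ and $T^{-1}(v)$. Since the blocks $(1,2)$ and $(3,2)$ do not ``telescope through'' block~$2$ in a way that leaves residue after the $D'$-conjugations, everything should cancel after regrouping. An alternative route is to apply the swap map $\zeta_{M|N}$ (or work partially inside the subalgebras $Y(\gl_{\mu_1+\mu_2})$ and $\psi_{\mu_1}\bigl(Y(\gl_{\mu_2|\mu_3})\bigr)$ of Lemma~\ref{corcommute}) to reduce to a shorter bracket; I would keep this in reserve as a cross-check on the signs.

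Part~(b) is the main computation. I would compute $[E_{1;i,j}(u),E_{2;h,k}(v)]$ by the same substitution, using that $E_{2}(v)=D_{2}'(v)\bigl(T_{2,3}(v)-F_1(v)D_1(v)E_{1,3}(v)\bigr)$ and simplifying by repeated use of the Gauss-decomposition matrix identities. The $\delta_{hj}$ on the right-hand side should arise precisely from the $\delta_{h,j}$ in (\ref{ydefs}), and the shape of the RHS should emerge after rewriting intermediate $T_{1,3}$-type products in terms of $D_{1}E_{1,3}$ plus lower-block corrections; the $E_{1,3}$ term encodes exactly the obstruction coming from the quasideterminant for the $(1,3)$-block.

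For (c) and (d), once (b) is in hand, I would appeal to the inductive identity (\ref{ter}), which in this setting reads $E_{1,3;i,j}^{(r)}=[E_{1;i,g}^{(r)},E_{2;g,j}^{(1)}]$ (the sign $(-1)^{\pa{2}}$ is trivial since block~$2$ is even), and then apply the super-Jacobi identity. For (c), expanding $[[E_{1;i,g}(u),E_{2;g,j}(\cdot)],E_{2;h,k}(v)]$ yields a sum of two nested brackets; one of them is handled by the relation (\ref{mnee}) applied inside $\psi_{\mu_1}\bigl(Y(\gl_{\mu_2|\mu_3})\bigr)$ to the pair $E_2,E_2$, and the other produces the asserted right-hand side using (b). Part~(d) is dual and uses (b) together with the $E_1$--$E_1$ relation in the non-super subalgebra $Y(\gl_{\mu_1+\mu_2})$, furnished by Theorem~\ref{bknonsuper}. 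I expect the main obstacle to be the sign and factor bookkeeping in~(b): the $(u-v)^{-1}$ pole, the $\delta_{hj}$, and the precise pairing of the $E_1,E_2,E_{1,3}$ contributions must all line up, and the self-referential appearance of $[E_1,E_2]$ on the right-hand sides of (c) and (d) means that (b) has to be pinned down to the exact coefficient before (c) and (d) can be safely deduced.
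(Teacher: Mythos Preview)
Your plan for (a) and (b) is in the right spirit, but the paper makes a sharper choice of substitution that you should adopt. Rather than writing $E_2(v)=D_2'(v)\bigl(T_{2,3}(v)-F_1(v)D_1(v)E_{1,3}(v)\bigr)$ and bracketing $E_1(u)$ against each factor, the paper pairs an entry of $T(u)$ with an entry of $T(v)^{-1}$ via relation~(\ref{usefull}). The point is that the Gauss decomposition of $T^{-1}$ gives the very short formula $t'_{\mu_1+h,\mu_1+\mu_2+k}(v)=-E_{2;h,q}(v)D'_{3;q,k}(v)$, so for (b) one starts from $(u-v)[t_{i,\mu_1+j}(u),t'_{\mu_1+h,\mu_1+\mu_2+k}(v)]$, substitutes $t_{i,\mu_1+j}=D_{1;i,p}E_{1;p,j}$ and the formula above, and then strips off $D_1(u)$ on the left and $D_3'(v)$ on the right in one move (using $[E_1,D_3']=0$ and $[D_1,E_2]=0$, themselves consequences of~(\ref{usefull})). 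The same pattern handles~(a). Your substitution would eventually work, but through a much longer computation.

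For (c) and (d) there is a genuine gap: the Jacobi-only route you propose is circular. If you write $E_{1,3;i,j}(u)=[E_{1;i,g}(u),E_{2;g,j}^{(1)}]$ and apply the super-Jacobi identity together with the coefficient of~(\ref{mnee}) (shifted by $\psi_{\mu_1}$), the term coming from $[E_{2;g,j}^{(1)},E_{2;h,k}(v)]=-E_{2;g,k}(v)E_{2;h,j}(v)$ produces $-[E_{1;i,g}(u),E_{2;g,k}(v)]\,E_{2;h,j}(v)$, which differs from the desired $E_{2;h,j}(v)\,[E_{1;i,g}(u),E_{2;g,k}(v)]$ precisely by the supercommutator $\bigl[[E_{1;i,g}(u),E_{2;g,k}(v)],E_{2;h,j}(v)\bigr]$. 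That vanishes only by Lemma~6.3(a), whose proof in the paper \emph{uses}~(c); and when $\mu_2=1$ (so $g=h$ is forced) the other Jacobi term does not vanish either. The paper breaks this circularity by bringing in a fresh RTT input for each of (c) and (d): for (c) it first computes $[E_{1,3;i,j}(u),D_{3;h,k}'(v)]$ (via $[E_{2}^{(1)},D_3']$ and Jacobi), and then uses $[t_{i,\mu_1+\mu_2+j}(u),t'_{\mu_1+h,\mu_1+\mu_2+k}(v)]=0$, which unpacks to $[E_{1,3}(u),E_2(v)D_3'(v)]=0$; for (d) it first computes $[D_{1;i,j}(u),\,E_{1,3;h,k}(v)-E_{1;h,q}(v)E_{2;q,k}(v)]$ and then uses $[t_{i,\mu_1+j}(u),t'_{h,\mu_1+\mu_2+k}(v)]=0$. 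In both cases the new $[t,t']=0$ identity is the missing ingredient that your Jacobi scheme lacks.
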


\begin{proof}
($a$) By (\ref{usefull}), we have $[t_{i,\mu_1+j}(u),t^{\prime}_{\mu_1+\mu_2+h,\mu_1+k}(v)]=0$. Substituting by $(\ref{t11})-(\ref{t'21})$ with respect to the composition $\mu$ and according to the indices, we have
\[
[D_{1;i,p}(u)E_{1;p,j}(u),-D^{\prime}_{3;h,q}(v)F_{2;q,k}(v)]=0.
\]
Computing the bracket, we obtain
\begin{equation}
D_{1;i,p}(u)E_{1;p,j}(u)D^{\prime}_{3;h,q}(v)F_{2;q,k}(v)-D^{\prime}_{3;h,q}(v)F_{2;q,k}(v)D_{1;i,p}(u)E_{1;p,j}(u)=0,
\end{equation}
where $p$ and $q$ are summed over $1,\ldots,\mu_1$ and $1,\ldots,\mu_3$, respectively. Similarly, by (\ref{usefull}), we have
\[
[t_{ij}(u),t^{\prime}_{\mu_1+\mu_2+h,\mu_1+k}(v)]=[t_{i,\mu_1+j}(u),t^{\prime}_{\mu_1+\mu_2+h,\mu_1+\mu_2+k}(v)]=0,
\]
which implies that
\[
[D_{1;i,j}(u),F_{2;h,k}(v)]=[E_{1;i,j}(u),D^{\prime}_{3;h,k}(v)]=0.
\]
Substituting these into (6.1) and noting that $[D_{1;i,j}(u),D^{\prime}_{3;h,k}(v)]=0$, we have
\[
D_{1;i,p}(u)D^{\prime}_{3;h,q}(v)E_{1;p,j}(u)F_{2;q,k}(v)
-D_{1;i,p}(u)D^{\prime}_{3;h,q}(v)F_{2;q,k}(v)E_{1;p,j}(u)=0.
\]
Multiplying $D_{3}(v)D^{\prime}_{1}(u)$ from the left, we obtain $(a)$.

($b$) By (\ref{usefull}), we have
\[
(u-v)[t_{i,\mu_1+j}(u),t^{\prime}_{\mu_1+h,\mu_1+\mu_2+k}(v)]=\delta_{jh}\sum_{s=1}^{M+N}t_{is}(u)t_{s,\mu_1+\mu_2+k}(v).
\]
Substituting by (\ref{t11})$-$(\ref{t'21}) according to the indices in the above identity, we have
\begin{multline}
(u-v)[D_{1;i,p}(u)E_{1;p,j}(u),-E_{2;h,q}(v)D^{\prime}_{3;q,k}(v)]=\\
\delta_{jh}D_{1;i,p}(u)\big\lbrace \big(E_{1;p,r}(v)E_{2;r,q}(v)-E_{1,3;p,q}(v)\big)-E_{1;p,r}(u)E_{2;r,q}(v)+E_{1,3;p,q}(u)\big\rbrace D^{\prime}_{3;q,k}(v),
\end{multline}
where the indices $p,q,r$ are summed over $\mu_1,\mu_3,\mu_2$, respectively.
Using the facts that
\begin{eqnarray*}
\big[E_{1;i,j}(v),D^{\prime}_{3;h,k}(u)\big]=0,& \qquad\big(\text{explained in the proof of }(a)\big)\\
\big[E_{2;i,j}(v),D_{1;h,k}(u)\big]=0,& \qquad \big(\text{obtained from}\;\;[t_{ij}(u),t^{\prime}_{\mu_1+h,\mu_1+\mu_2+k}(v)]=0\big)
\end{eqnarray*}
we may cancel $D_{1}(u)$ from the left and $D^{\prime}_3(v)$ from the right on both sides of (6.2). Dividing both sides by $u-v$, we have proved $(b)$.

($c$) By (5.1) in $Y_{(\mu_2|\mu_3)}[[u^{-1},v^{-1}]]$, we have
\begin{equation*}
(u-v)[E_{1;h,k}(u),D^{\prime}_{2;i,j}(v)]=\delta_{ki}\big(E_{1;h,p}(v)-E_{1;h,p}(u)\big)D^{\prime}_{2;p,j}(v).
\end{equation*}
Applying the map $\psi_{\mu_1}$ to this identity and using (4.3)$-$(4.5), we have the following identity in $Y_{(\mu_1,\mu_2|\mu_3)}[[u^{-1},v^{-1}]]$
\begin{equation*}
(u-v)[E_{2;h,k}(u),D^{\prime}_{3;i,j}(v)]=\delta_{ki}\big(E_{2;h,p}(v)-E_{2;h,p}(u)\big)D^{\prime}_{3;p,j}(v).
\end{equation*}
Taking the coefficient of $u^0$, we obtain
\begin{equation}
[E_{2;h,k}^{(1)},D^{\prime}_{3;i,j}(v)]=\delta_{ki}E_{2;h,p}(v)D^{\prime}_{3;p,j}(v).
\end{equation}
Also by (\ref{ter}), we have 
\begin{equation}
E_{1,3;i,j}(u)=[E_{1;i,g}(u),E^{(1)}_{2;g,j}],\; \text{for any} \; 1\leq g\leq\mu_2.
\end{equation}
By (6.3), (6.4) and the fact that $[E_{1;i,g}(u),D^{\prime}_{3;h,k}(v)]=0$, we have
\begin{align}\notag
[E_{1,3;i,j}(u),D^{\prime}_{3;h,k}(v)]&=\big[[E_{1;i,g}(u),E^{(1)}_{2;g,j}\big],D^{\prime}_{3;h,k}(v)]\\\notag
&=\big[E_{1;i,g}(u),[E^{(1)}_{2;g,j},D^{\prime}_{3;h,k}(v)]\big]\\\notag
&=\big[E_{1;i,g}(u),\delta_{hj}E_{2;g,p}(v)D^{\prime}_{3;p,k}(v)\big]\\
&=\delta_{hj}\big[E_{1;i,g}(u),E_{2;g,p}(v)\big]D^{\prime}_{3;p,k}(v). 
\end{align}
By (\ref{usefull}) and (\ref{t11})$-$(\ref{t'21}), we have
\[
[t_{i,\mu_1+\mu_2+j}(u),t^{\prime}_{\mu_1+h,\mu_1+\mu_2+k}(v)]=[D_{1;i,p}(u)E_{1,3;p,j}(u),-E_{2;h,q}(v)D^{\prime}_{3;q,k}(v)]=0,
\]
where $p$ and $q$ are summed over $1,2,\ldots,\mu_1$ and $1,2,\ldots,\mu_3$, respectively. Multiplying $D_1^{\prime}(u)$ from the left, we have $[E_{1,3;i,j}(u),E_{2;h,q}(v)D^{\prime}_{3;q,k}(v)]=0,$ which may be written as
\[
[E_{1,3;i,j}(u),E_{2;h,q}(v)]D^{\prime}_{3;q,k}(v)-E_{2;h,q}(v)[E_{1,3;i,j}(u),D^{\prime}_{3;q,k}(v)]=0.
\]
Substituting the last bracket by (6.5), we have
\[
[E_{1,3;i,j}(u),E_{2;h,q}(v)]D^{\prime}_{3;q,k}(v)-\delta_{qj}E_{2;h,q}(v)[E_{1;i,g}(u),E_{2;g,p}(v)]D^{\prime}_{3;p,k}(v)=0.
\]
\[
\Longrightarrow
[E_{1,3;i,j}(u),E_{2;h,q}(v)]D^{\prime}_{3;q,k}(v)=E_{2;h,j}(v)[E_{1;i,g}(u),E_{2;g,p}(v)]D^{\prime}_{3;p,k}(v).
\]
Multiplying $D_{3}(v)$ from the right to both sides of the above equality, we obtain ($c$).

($d$) Taking the coefficient of $u^0$ in ($b$), we have
\begin{equation}\notag
[E_{1;i,j}^{(1)},E_{2;h,k}(v)]=\delta_{hj}\big(E_{1,3;i,k}(v)-E_{1;i,q}(v)E_{2;q,k}(v)\big).
\end{equation}
Taking the coefficient of $v^0$ in (5.1) in the case $a=1$, we have
\begin{equation}\notag
[D_{1;i,j}(u),E_{1;h,k}^{(1)}]=\delta_{hj}D_{1;i,p}(u)E_{1;p,k}(u).
\end{equation}
By the above two equalities and the fact that $[D_{1;i,j}(u),E_{2;g,k}(v)]=0$, we have
\begin{align}
[D_{1;i,j}(u),E_{1,3;h,k}(v)-E_{1;h,q}(v)E_{2;q,k}(v)]&=[D_{1;i,j}(u),\big[E_{1;h,g}^{(1)},E_{2;g,k}(v)]\big]\notag\\
&=\big[[D_{1;i,j}(u),E_{1;h,g}^{(1)}],E_{2;g,k}(v)\big]\notag\\
&=[\delta_{hj}D_{1;i,p}(u)E_{1;p,g}(u),E_{2;g,k}(v)]\notag\\
&=\delta_{hj}D_{1;i,p}(u)[E_{1;p,g}(u),E_{2;g,k}(v)].
\end{align}
Taking the sum of all $j$ in (6.6), we have
\begin{multline}\notag
\delta_{hr}D_{1;i,p}(u)[E_{1;p,g}(u)E_{2;g,k}(v)]=
D_{1;i,r}(u)\big(E_{1,3;h,k}(v)-E_{1;h,s}(v)E_{2;s,k}(v)\big)\\
-\big(E_{1,3;h,k}(v)-E_{1;h,s}(v)E_{2;s,k}(v)\big)D_{1;i,r}(u),
\end{multline}
where $p,r,s$ are summed over $\mu_1,\mu_1,\mu_2$, respectively. Changing the indices, we may rewrite the above equality as
\begin{multline}
\big(E_{1;h,r}(v)E_{2;r,k}(v)-E_{1,3;h,k}(v)\big)D_{1;i,p}(u)=\\
\delta_{hp}D_{1;i,p'}(u)[E_{1;p',g}(u),E_{2;g,k}(v)]
+D_{1;i,p}(u)\big(E_{1;h,r}(v)E_{2;r,k}(v)-E_{1,3;h,k}(v)\big),
\end{multline}
where $r,p,p'$ are summed over $\mu_2, \mu_1 ,\mu_1$, respectively.

On the other hand, by (\ref{usefull}) and (\ref{t11})$-$(\ref{t'21}), we have
\begin{multline}
[t_{i,\mu_1+j}(u),t^{\prime}_{h,\mu_1+\mu_2+k}(v)]=\\
[D_{1;i,p}(u)E_{1;p,j}(u),\big(E_{1;h,r}(v)E_{2;r,q}(v)-E_{1,3;h,q}(v)\big)D^{\prime}_{3;q,k}(v)]=0,
\end{multline}
where $p$ and $q$ are summed over $\mu_1$ and $\mu_3$, respectively. Multiplying $D_{3}(v)$ from the right and computing the bracket, (6.8) becomes
\begin{multline}
D_{1;i,p}(u)E_{1;p,j}(u)\big(E_{1;h,r}(v)E_{2;r,k}(v)-E_{1,3;h,k}(v)\big)\\
-\big(E_{1;h,r}(v)E_{2;r,k}(v)-E_{1,3;h,k}(v)\big)D_{1;i,p}(u)E_{1;p,j}(u)=0,
\end{multline}
where $p$ and $r$ are summed over $\mu_1$ and $\mu_2$, respectively. Substituting (6.7) into the second term of (6.9), we have
\begin{multline*}
D_{1;i,p}(u)E_{1;p,j}(u)\big(E_{1;h,q}(v)E_{2;q,k}(v)-E_{1,3;h,k}(v)\big)\\
-\delta_{h,p_1}D_{1;i,p_2}(u)\big[E_{1;p_2,g}(u),E_{2;g,k}(v)\big]E_{1;p_1,j}(u)\\
-D_{1;i,p_3}(u)\big(E_{1;h,q_1}(v)E_{2;q_1,k}(v)-E_{1,3;h,k}(v)\big)E_{1;p_3,j}(u)=0.
\end{multline*}
Multiplying $D^{\prime}_{1}(u)$ from the left, we obtain
\begin{multline*}
E_{1;i,j}(u)\big(E_{1;h,q}(v)E_{2;q,k}(v)-E_{1,3;h,k}(v)\big)\\
-\big[E_{1;i,g}(u),E_{2;g,k}(v)\big]E_{1;h,j}(u)
-\big(E_{1;h,q_1}(v)E_{2;q_1,k}(v)-E_{1,3;h,k}(v)\big)E_{1;i,j}(u)=0.
\end{multline*}
Simplifying the above, we obtain ($d$).
\end{proof}


We have the F-counterpart of Lemma \ref{mmn}.
\begin{lemma}
The following identities hold in $Y_{(\mu_1,\mu_2|\mu_3)}((u^{-1},v^{-1}))$ for all admissible $g,h,i,j,k$.
\begin{itemize}
\item[(a)]
$[F_{1;i,j}(u),E_{2;h,k}(v)]=0,$
\item[(b)]
\hspace*{-2mm}$[F_{1;i,j}(u),F_{2;h,k}(v)]=
\dfrac{\delta_{ik}}{u-v}\big\lbrace F_{2;h,q}(v)\big(F_{1;q,j}(v)-F_{1;q,j}(u)\big)-F_{3,1;h,j}(v)+F_{3,1;h,j}(u)\big\rbrace,$
\item[(c)]
$[F_{3,1;i,j}(u),F_{2;h,k}(v)]=[F_{2;h,g}(v),F_{1;g,j}(u)]F_{2;i,k}(v),$
\item[(d)]
$[F_{1;i,j}(u) , F_{2;h,q}(v)F_{1;q,k}(v)-F_{3,1;h,k}(v)]=F_{1;i,k}(u)[F_{1;g,j}(u),F_{2;h,g}(v)].$
\end{itemize}
Here, $q$ is summed over $1,\dots,\mu_2$ and $g$ could be any number in $\lbrace 1,2,\ldots,\mu_{2}\rbrace$.
\end{lemma}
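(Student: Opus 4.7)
The plan is to establish Lemma 6.2 by mirroring the proof of Lemma \ref{mmn} with the roles of $E$'s and $F$'s interchanged throughout. Each of the four parts follows from a parallel RTT-based computation: one starts from an appropriate commutator of the original generators $t_{ij}(u)$ and $t^\prime_{hk}(v)$ coming from (\ref{usefull}), substitutes the Gauss-decomposition expressions (\ref{t11})--(\ref{t'21}), and then cancels the $D$'s in the outermost blocks, which supercommute with the relevant $F$'s and $E$'s by Lemma \ref{corcommute} and elementary consequences of (\ref{usefull}).

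For part (a), the starting point is $[t_{\mu_1+i,j}(u), t^\prime_{\mu_1+h,\mu_1+\mu_2+k}(v)] = 0$, which holds since neither $\delta$-condition in (\ref{usefull}) is met under the admissible index ranges. Substituting $t_{\mu_1+i,j}(u) = F_{1;i,p}(u) D_{1;p,j}(u)$ from (\ref{t21}) and $t^\prime_{\mu_1+h,\mu_1+\mu_2+k}(v) = -E_{2;h,q}(v) D^\prime_{3;q,k}(v)$ obtained from $T(v)^{-1} = E(v)^{-1}D^\prime(v)F(v)^{-1}$, together with the easily verified vanishings $[D_{1;i,j}(u), E_{2;h,k}(v)] = 0$ and $[F_{1;i,j}(u), D^\prime_{3;h,k}(v)] = 0$, one rearranges to pull $D$-factors to the outside and then multiplies by $D^\prime_1(u)$ on the left and $D_3(v)$ on the right to eliminate them. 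Part (b) proceeds similarly, starting from a suitable non-vanishing RTT commutator of $t$ and $t^\prime$ whose $\delta$-conditions produce $F$-products, and dividing by $(u-v)$ after $D$-cancellation.

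For parts (c) and (d), the strategy is to replace $F_{3,1;i,j}(u)$ by the nested supercommutator $(-1)^{\pa{2}}[F_{2;i,g}^{(1)}, F_{1;g,j}(u)]$ furnished by (\ref{ter}) and then apply the super-Jacobi identity. For (c), expanding $[F_{3,1;i,j}(u), F_{2;h,k}(v)]$ in this way produces two inner brackets: one is $[F_{2;i,g}^{(1)}, F_{2;h,k}(v)]$, which is controlled by the $F$-$F$ within-block relation (\ref{mnff}) in Proposition \ref{1stsuper}, and the other is $[F_{1;g,j}(u), F_{2;h,k}(v)]$, which is part (b). Combining these isolates the desired product $[F_{2;h,g}(v), F_{1;g,j}(u)] F_{2;i,k}(v)$. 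Part (d) unfolds by the same mechanism applied to $[F_{1;i,j}(u), F_{3,1;h,k}(v)]$, together with the specialization at the coefficient of $u^0$ of the $D$-$F$ relation (\ref{mndf}) to convert a $D_2$-bracket into an $F_2^{(1)}$-bracket at the appropriate step.

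The principal obstacle will be the careful bookkeeping of signs from the $\mathbb{Z}_2$-grading: many of the $F$'s involved in Lemma 6.2 are odd (those whose two block indices straddle the even/odd divide), so each supercommutator rearrangement and every application of the super-Jacobi identity must be audited for a possible sign correction coming from the parities of the intervening terms. A secondary technical point is checking that the right-hand side of (\ref{ter}) is indeed independent of the intermediate index $g$, which is needed for the computations in (c) and (d) to go through cleanly.
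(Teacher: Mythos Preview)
Your overall plan --- mirror the proof of Lemma~\ref{mmn} with $E$'s and $F$'s interchanged --- is exactly what the paper does, and your treatment of (a) and (b) is the correct mirror.

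For (c) and (d), however, your sketch departs from the actual mirror of the paper's argument and risks circularity. If you expand $F_{3,1;i,j}(u)=[F_{2;i,g}^{(1)},F_{1;g,j}(u)]$ and apply super-Jacobi directly to $[F_{3,1;i,j}(u),F_{2;h,k}(v)]$, one of the resulting terms is $[F_{2;i,g}^{(1)},[F_{1;g,j}(u),F_{2;h,k}(v)]]$; substituting part (b) into the inner bracket reintroduces $[F_{2;i,g}^{(1)},F_{3,1;h,j}(\cdot)]$, which is a special case of the identity you are trying to prove. The paper's proof of Lemma~\ref{mmn}(c) avoids this by a different use of Jacobi: it first computes the auxiliary bracket $[E_{1,3;i,j}(u),D'_{3;h,k}(v)]$ via (\ref{ter}) and (6.3) (yielding (6.5)), and only \emph{then} invokes the RTT vanishing $[t_{i,\mu_1+\mu_2+j}(u),t'_{\mu_1+h,\mu_1+\mu_2+k}(v)]=0$. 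The $F$-counterpart should follow the same two-step structure: first compute the analogue of (6.5), namely a formula for $[F_{3,1;i,j}(u),D_{1;h,k}(v)]$, and then start from the mirrored RTT identity. Likewise for (d): the paper's argument for Lemma~\ref{mmn}(d) passes through the intermediate computation (6.6)--(6.7) of $[D_{1;i,j}(u),E_{1,3;h,k}(v)-E_{1;h,q}(v)E_{2;q,k}(v)]$ before invoking $[t_{i,\mu_1+j}(u),t'_{h,\mu_1+\mu_2+k}(v)]=0$. In short, keep the promise of your first paragraph that each part starts from an identity coming from (\ref{usefull}), and use the Jacobi step only to handle the auxiliary bracket with a $D$-block, not to compute $[F_{3,1},F_2]$ directly.
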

\begin{proof}
They can be proved by similar methods as in the proof of Lemma 6.1 and we skip the details.
\end{proof}
The following lemma is a generalization of \cite[Lemma 6.5, Lemma 6.6]{BK1} and of part of \cite[Lemma~3]{Go}.
\begin{lemma}
The following identities hold in $Y_{(\mu_1,\mu_2|\mu_3)}[[u^{-1},v^{-1},w^{-1}]]$ for all admissible $f,g,h,i,j,k$.
\begin{itemize}
\item[(a)]$\big[[E_{1;i,j}(u),E_{2;h,k}(v)],E_{2;f,g}(v)\big]=0,$
\item[(b)]$\big[E_{1;i,j}(u),[E_{1;h,k}(u),E_{2;f,g}(v)]\big]=0,$
\item[(c)]$\big[[E_{1;i,j}(u),E_{2;h,k}(v)],E_{2;f,g}(w)\big]+
\big[[E_{1;i,j}(u),E_{2;h,k}(w)],E_{2;f,g}(v)\big]=0,$
\item[(d)]$\big[E_{1;i,j}(u),[E_{1;h,k}(v),E_{2;f,g}(w)]\big]+
\big[E_{1;i,j}(v),[E_{1;h,k}(u),E_{2;f,g}(w)]\big]=0,$
\item[(e)]
$\big[[F_{1;i,j}(u),F_{2;h,k}(v)],F_{2;f,g}(v)\big]=0,$
\item[(f)]
$\big[F_{1;i,j}(u),[F_{1;h,k}(u),F_{2;f,g}(v)]\big]=0,$
\item[(g)]
$\big[[F_{1;i,j}(u),F_{2;h,k}(v)],F_{2;f,g}(w)\big]+\big[[F_{1;i,j}(u),F_{2;h,k}(w)],F_{2;f,g}(v)\big]=0,$
\item[(h)]
$\big[F_{1;i,j}(u),[F_{1;h,k}(v),F_{2;f,g}(w)]\big]+\big[F_{1;i,j}(v),[F_{1;h,k}(u),F_{2;f,g}(w)]\big]=0.$
\end{itemize}
\end{lemma}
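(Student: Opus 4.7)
The plan is to observe first that the eight identities split into an $E$-family (a)--(d) and an $F$-family (e)--(h); the $F$-family will follow from the $E$-family by an application of the swap map $\zeta_{M|N}$. Within the $E$-side, setting $w=v$ in (c) gives twice the left-hand side of (a), and setting $u=v$ in (d) together with the relabeling $w\to v$ gives twice the left-hand side of (b); so it suffices to prove only the two ``polarized'' versions (c) and (d).

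To prove (c), I would apply $\mathrm{ad}\bigl(E_{2;f,g}(w)\bigr)$ to both sides of Lemma~6.1(b), written in the form
\[
(u-v)\bigl[E_{1;i,j}(u), E_{2;h,k}(v)\bigr] = \delta_{hj}\bigl\{\bigl(E_{1;i,q}(u) - E_{1;i,q}(v)\bigr)E_{2;q,k}(v) + E_{1,3;i,k}(v) - E_{1,3;i,k}(u)\bigr\}.
\]
On the right-hand side, Lemma~6.1(c) handles the bracket of $E_{1,3}$ with $E_2$, while the remaining pieces are expanded using Lemma~6.1(b) again (for the $E_1,E_2$ parts) together with the $EE$-commutation relation (\ref{mnee}) from Proposition~5.2 applied inside the shifted subalgebra $\psi_{\mu_1}\bigl(Y(\gl_{\mu_2|\mu_3})\bigr)\subseteq Y_{(\mu_1,\mu_2|\mu_3)}$ to rewrite $[E_{2;q,k}(v),E_{2;f,g}(w)]$ as a rational function of $v$ and $w$ in the $E_2$'s. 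Symmetrizing the resulting identity in $v\leftrightarrow w$ makes the leftover terms cancel in pairs and produces (c).

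The argument for (d) is parallel: start from Lemma~6.1(d), apply $\mathrm{ad}$ of an $E_1$ in a third parameter, use the $E_1E_1$-commutation relation (Proposition~5.2, equivalently the specialization of Theorem~5.1 inside the $Y(\gl_M)$-subalgebra) to move the $E_1$'s past one another, and then symmetrize in $u\leftrightarrow v$. For (e)--(h), applying $\zeta_{M|N}$ to (a)--(d) in $Y_{(\mu_1,\mu_2|\mu_3)}$ lands, by Proposition~4.5, in an identity of the same shape in $Y_{(\mu_3|\mu_2,\mu_1)}$ with each $E_a(u)$ replaced by $\pm F_{m+n-a}(u)$ (and indices reflected); reading this back via the inverse isomorphism $\zeta_{N|M}$ yields (e)--(h).

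The main obstacle I anticipate is careful sign tracking. The element $E_2$ is odd, so the bracket is a supercommutator, and in the course of expanding triple brackets and sliding odd operators past one another one repeatedly needs the anticommutation identity $E_{2;h,j}(v)E_{2;i,k}(v)+E_{2;i,k}(v)E_{2;h,j}(v)=0$, which is the $u\to v$ limit of (\ref{mnee}) (cf.\ equation~(5.21) in the proof of Proposition~5.2). Ensuring that after all applications of Lemma~6.1(b), (c), (d) and the $E_2E_2$ relation every surviving term really cancels, rather than merely collapses to a smaller nonzero expression, will be the real technical content of the argument.
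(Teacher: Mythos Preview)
Your plan for the $E$-side is sound and close to the paper's. For (c) the paper does exactly what you describe: expand $(u-v)\bigl[[E_{1;i,j}(u),E_{2;h,k}(v)],E_{2;f,g}(w)\bigr]$ using Lemma~6.1(b), then compute the remaining brackets via Lemma~6.1(c) and the shifted form of~(\ref{mnee}), and verify that after clearing denominators by $(u-w)(v-w)$ the result is symmetric in $v,w$. Your reduction of (a) and (b) from (c) and (d) by specialising $w=v$ (resp.\ $u=v$) is a legitimate shortcut the paper does not take; the paper instead proves (a) separately by deriving
\[
(u-v-1)\bigl[[E_{1}(u),E_{2}(v)],E_{2}(v)\bigr]=-\bigl[[E_{1}(v),E_{2}(v)],E_{2}(v)\bigr]
\]
and then setting $u=v+1$.

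The swap argument for (e)--(h) does not work as written. Applying $\zeta_{M|N}$ to (a)--(d) in $Y_{(\mu_1,\mu_2\mid\mu_3)}$ produces $F$-identities in $Y_{(\mu_3\mid\mu_2,\mu_1)}$, the $m{=}1$, $n{=}2$ Yangian with the reversed composition (and with $E_1,E_2$ becoming $-F_2,-F_1$ respectively), not the identities (e)--(h), which live in the original $Y_{(\mu_1,\mu_2\mid\mu_3)}$. Composing afterwards with $\zeta_{N|M}$ simply inverts the first map and returns you to (a)--(d). To genuinely obtain (e)--(h) via swap you would first need the $E$-identities in $Y_{(\mu_3\mid\mu_2,\mu_1)}$, which is a separate computation of the same size. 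What the paper intends by ``similar fashion'' is rather to rerun the argument for (a)--(d) verbatim with every appeal to Lemma~6.1 replaced by its $F$-counterpart Lemma~6.2, working entirely inside $Y_{(\mu_1,\mu_2\mid\mu_3)}$.
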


\begin{proof}
We prove ($a$) and ($c$) in detail here, while the others can be proved in a similar fashion.

($a$) We first claim that
\[
[E_{a;i,j}(v),E_{a;h,k}(v)]=0 \quad \text{for}\; a=1,2 \quad\text{in}\quad Y_{(\mu_1,\mu_2|\mu_3)}[[u^{-1},v^{-1}]].
\]
The case $a=1$ follows from Theorem \ref{bknonsuper} and $a=2$ follows from applying the map $\psi_{\mu_1}$ to (\ref{mnee}).

By the super-Jacobi identity, together with the claim and Lemma \ref{mmn}($b$), it suffices to prove the case when $j=h=f$. In this case, we compute the bracket by Lemma \ref{mmn} as follows. 
\begin{align*}
(u-v)\big[[E&_{1;i,j}(u),E_{2;j,k}(v)],E_{2;j,g}(v)\big]\\
&=-(u-v)\big[E_{2;j,k}(v),[E_{1;i,j}(u),E_{2;j,g}(v)]\big]\\
&=[E_{1;i,q}(u) E_{2;q,g}(v)-E_{1;i,q}(v) E_{2;q,g}(v)
+ E_{1,3;i,g}(v) - E_{1,3;i,g}(u),E_{2;j,k}(v)]\\
&=[E_{1;i,q}(u)E_{2;q,g}(v),E_{2;j,k}(v)]+[E_{1,3;i,g}(v),E_{2;j,k}(v)]\\
&\qquad -[E_{1;i,q}(v)E_{2;q,g}(v),E_{2;j,k}(v)]-[E_{1,3;i,g}(u),E_{2;j,k}(v)]\\
&=-[E_{1;i,q}(u),E_{2;j,k}(v)]E_{2;q,g}(v)-E_{2;j,g}(v)[E_{1;i,j}(u),E_{2;j,k}(v)]\\
&\qquad +[E_{1;i,q}(v),E_{2;j,k}(v)]E_{2;q,g}(v)+E_{2;j,g}(v)[E_{1;i,j}(u),E_{2;j,k}(v)]\\
&=-\big[[E_{1;i,j}(u),E_{2;j,k}(v)],E_{2;j,g}(v)\big]+\big[[E_{1;i,j}(v),E_{2;j,k}(v)],E_{2;j,g}(v)\big].
\end{align*}
Thus we have
\begin{equation}
(u-v-1)\big[[E_{1;i,j}(u),E_{2;j,k}(v)],E_{2;j,g}(v)\big]=-\big[[E_{1;i,j}(v),E_{2;j,k}(v)],E_{2;j,g}(v)\big].
\end{equation}
Note that the right hand side of (6.10) is independent of the choice of $u$. Set $u=v+1$, then the right hand side  of (6.10) is zero. Using (6.10) again, we obtain ($a$).

($c$) It is enough to show that
\begin{equation}
(u-w)(v-w)(u-v)\big[[E_{1;i,j}(u),E_{2;h,k}(v)],E_{2;f,g}(w)\big]
\end{equation}
is symmetric in $v$ and $w$. We may further assume $j=h$, as in the proof of ($a$).
By Lemma \ref{mmn}$(b)$, we have
\begin{align*}
&(u-v)\big[[E_{1;i,j}(u),E_{2;j,k}(v)],E_{2;f,g}(w)\big]\\
&=\big[E_{1;i,q}(u)E_{2;q,k}(v)-E_{1;i,q}(v)E_{2;q,k}(v)+E_{1,3;i,k}(v)-E_{1,3;i,k}(u),E_{2;f,g}(w)\big].
\end{align*}
Multiplying both sides with $(u-w)(v-w)$, computing the brackets by Lemma~\ref{mmn}, we have
{\allowdisplaybreaks
\begin{align}\notag
(u-w&)(v-w)(u-v)\big[[E_{1;i,j}(u),E_{2;h,k}(v)],E_{2;f,g}(w)\big]\\ \notag
=&(u-w)(v-w)\big\lbrace E_{1;i,q}(u)E_{2;q,k}(v)E_{2;f,g}(w)+E_{2;f,g}(w)E_{1;i,q}(u)E_{2;q,k}(v)\\\notag
\,&-E_{1;i,q}(v)E_{2;q,k(v)}E_{2;f,g}(w)-E_{2;f,g}(w)E_{1;i,q}(v)E_{2;q,k}(v)\\\notag
\,&+E_{2;f,k}(w)[E_{1;i,x}(v),E_{2;x,g}(w)]-E_{2;f,k}(w)[E_{1;i,x}(u),E_{2;x,g}(w)]\big\rbrace\\\notag
=&(u-w)(v-w)\big\lbrace E_{1;i,q}(u)[E_{2;q,k}(v),E_{2;f,g}(w)]-E_{1;i,q}(u)E_{2;f,g}(w)E_{2;q,k}(v)\\\notag
\,&+[E_{2;f,g}(w),E_{1;i,q}(u)]E_{2;q,k}(v)+E_{1;i,q}(u)E_{2;f,g}(w)E_{2;q,k}(v)\\\notag
\,&-E_{1;i,q}(v)[E_{2;q,k}(v),E_{2;f,g}(w)]+E_{1;i,q}(v)E_{2;f,g}(w)E_{2;q,k}(v)\\\notag
\,&-[E_{2;f,g}(w),E_{1;i,q}(v)]E_{2;q,k}(v)-E_{1;i,q}(v)E_{2;f,g}(w)E_{2;q,k}(v)\\\notag
\,&-[E_{1;i,x}(v),E_{2;x,g}(w)]E_{2;f,k}(w)+[E_{1;i,x}(u),E_{2;x,g}(w)]E_{2;f,k}(w)\big\rbrace\\\notag
=&(u-w)(v-w)E_{1;i,q}(u)\big[E_{2;q,k}(v),E_{2;f,g}(w)\big]\\\notag
\,&+(u-w)(v-w)\big[E_{2;f,g}(w),E_{1;i,q}(u)\big]E_{2;q,k}(v)\\\notag
\,&-(u-w)(v-w)E_{1;i,q}(v)\big[E_{2;q,k}(v),E_{2;f,g}(w)\big]\\\notag
\,&-(u-w)(v-w)\big[E_{2;f,g}(w),E_{1;i,q}(v)\big]E_{2;q,k}(v)\\\notag
\,&-(u-w)(v-w)\big[E_{1;i,x}(v),E_{2;x,g}(w)\big]E_{2;f,k}(w)\\
\,&+(u-w)(v-w)\big[E_{1;i,x}(u),E_{2;x,g}(w)\big]E_{2;f,k}(w).
\end{align}}
Now we use (\ref{mnee}) and Lemma \ref{mmn} to compute these brackets, then (6.12) equals
\begin{align*}
&(u-w)E_{1;i,q}(u)\big(E_{2;q,g}(v)-E_{2;q,g}(w)\big)\big(E_{2;f,k}(w)-E_{2;f,k}(v)\big)\\
\,&-(v-w)\delta_{q,j}\big\lbrace \big(E_{1;i,q_0}(u)-E_{1;i,q_0}(w)\big)E_{2;q_0,g}(w)+E_{1,3;i,g}(w)-E_{1,3;i,g}(u)\big\rbrace E_{2;q,k}(v)\\
\,&-(u-w)E_{1;i,q}(v)\big(E_{2;q,g}(v)-E_{2;q,g}(w)\big)\big(E_{2;f,k}(w)-E_{2;f,k}(v)\big)\\
\,&+(u-w)\delta_{q,f}\big\lbrace \big(E_{1;i,q_0}(v)-E_{1;i,q_0}(w)\big)E_{2;q_0,g}(w)+E_{1,3;i,g}(w)-E_{1,3;i,g}(v)\big\rbrace E_{2;q,k}(v)\\
\,&-(u-w)\big\lbrace E_{1;i,q}(v)E_{2;q,g}(w)-E_{1;i,q}(w)E_{2;q,g}(w)+E_{1,3;i,g}(w)-E_{1,3;i,g}(v)\big\rbrace E_{2;f,k}(w)\\
\,&+(v-w)\big\lbrace E_{1;i,q}(u)E_{2;q,g}(w)-E_{1;i,q}(w)E_{2;q,g}(w)+E_{1,3;i,g}(w)-E_{1,3;i,g}(u)\big\rbrace E_{2;f,k}(w),
\end{align*}
where the indices $q$ and $q_0$ are summed over $1,2,\ldots\mu_2$.

Opening the parentheses of the above equality, we obtain that the resulting expression is indeed symmetric in $v$ and $w$. Therefore, (6.11) is symmetric in $v$ and $w$ and hence ($c$) is proved.
\end{proof}

\section{The general Case}

Recall that our goal is to obtain the relations among the generators $\lbrace D_{a;i,j}^{(r)}, D_{a;i,j}^{\prime'(r)}\rbrace$, $\lbrace E_{a;i,j}^{(r)}\rbrace$, and $\lbrace F_{a;i,j}^{(r)}\rbrace$ associated to a composition $\mu$ of $(M|N)$. To that end, we divide them into 3 disjoint parts as following:
\begin{itemize}
\item[A\,:]$\big\lbrace D_{a;i,j}^{(r)}, D_{a;i,j}^{\prime(r)}\big\rbrace_{1\leq a\leq m}
\cup \big\lbrace E_{a;i,j}^{(r)}\big\rbrace_{1\leq a< m}\cup
\big\lbrace F_{a;i,j}^{(r)}\big\rbrace_{1\leq a< m}$,

\item[B\,:]$\big\lbrace D_{a;i,j}^{(r)}, D_{a;i,j}^{\prime(r)}\big\rbrace_{m+1\leq a\leq m+n}
\cup\big\lbrace E_{a;i,j}^{(r)}\big\rbrace_{m+1\leq a< m+n}\cup
\big\lbrace F_{a;i,j}^{(r)}\big\rbrace_{m+1\leq a< m+n}$,

\item[C\,:]$\big\lbrace E_{m;i,j}^{(r)}\big\rbrace\cup
\big\lbrace F_{m;i,j}^{(r)}\big\rbrace$,
\end{itemize}
for all admissible indices $i, j, r$.

If we choose two elements from Part A, then their bracket is obtained by Theorem~\ref{bknonsuper}. If we choose two elements from Part B, then they are the images of some elements from the Part A in $Y(\gl_{N|M})$ under the swap map $\zeta_{N|M}$, and the bracket is obtained by Theorem~\ref{bknonsuper} as well.

Now suppose one of them is from Part A and the other is from Part B. Note that every element in Part A is in the north-western $M\times M$ corner of $T(u)$ and hence is in the subalgebra $Y(\gl_M)$ of $Y(\gl_{M|N})$ (see Section 4). On the other hand, every element in Part B is in the south-eastern $N\times N$ corner of $T(u)$ and hence is in the subalgebra $\psi_M\big(Y(\gl_{0|N})\big)$ of $Y(\gl_{M|N})$. Thus, their bracket is zero by Lemma \ref{corcommute}.

 Therefore, we only have to focus on the cross section where the odd blocks and even blocks are ``close", and this is done in Proposition~\ref{1stsuper}, Lemma \ref{mmn} and Lemma 6.2. Moreover, there are some non-trivial ternary brackets relations in the non-super case, and the corresponding ternary relations in the super case are found in Lemma 6.3.

The following proposition summarizes the results we have obtained up to now.
{\allowdisplaybreaks
\begin{proposition}\label{pgpropo}
For all admissible $a,b,f,g,h,i,j,k$, we have the following equalities in the super Yangian $Y_{\mu}((u^{-1},v^{-1},w^{-1}))$.
\begin{align*}
\lefteqn{(u-v)[D_{a;i,j}(u), E_{b;h,k}(v)]=}\\
&& \qquad\qquad\left\{
  \begin{array}{ll}
  (-1)^{\pa{b}}\big\lbrace\delta_{a,b}\delta_{h,j}D_{a;i,p}(u)\big(E_{a;p,k}(v)-E_{a;p,k}(u)\big)\\[2mm]
  \qquad\qquad\qquad -\delta_{a,b+1}D_{a;i,k}(u)\big(E_{b;h,j}(v)-E_{b;h,j}(u)\big)\big\rbrace, \;\;\text{if}\;\; b\ne m,\\[3mm]
   \delta_{a,b}\delta_{h,j}D_{a;i,p}(u)\big(E_{a;p,k}(v)-E_{a;p,k}(u)\big)\\[2mm]
   \qquad\qquad\qquad +\delta_{a,b+1}D_{a;i,k}(u)\big(E_{b;h,j}(v)-E_{b;h,j}(u)\big),\;\; \text{if}\;\; b = m,
        \end{array}\right.\notag\\[2mm]
\lefteqn{(u-v)[D_{a;i,j}(u), F_{b;h,k}(v)]=}\\
&& \qquad\qquad\left\{
  \begin{array}{ll}
 (-1)^{\pa{b}}\big\lbrace -\delta_{a,b}\delta_{k,i}\big(F_{b;h,p}(v)-F_{b;h,p}(u)\big)D_{a;p,j}(u)\\[2mm]
  \qquad\qquad\qquad +\delta_{a,b+1}\big(F_{b;i,k}(v)-F_{b;i,k}(u)\big)D_{a;h,j}(u) \big\rbrace, \;\;\text{if}\;\; b\ne m,\\[3mm]
 -\delta_{a,b}\delta_{k,i}\big(F_{b;h,p}(v)-F_{b;h,p}(u)\big)D_{a;p,j}(u)\\[2mm]
  \qquad\qquad\qquad -\delta_{a,b+1}\big(F_{b;i,k}(v)-F_{b;i,k}(u)\big)D_{a;h,j}(u),\text{ if }\;b = m,
   \end{array}\right.\notag
\end{align*}
\begin{eqnarray*}
\lefteqn{\hspace*{-10mm}(u-v)[E_{a;i,j}(u), E_{a;h,k}(v)]=}\\
&&\qquad\left\{
  \begin{array}{ll}
      (-1)^{\pa{a}} \big(E_{a;i,k}(u)-E_{a;i,k}(v)\big)\big(E_{a;h,j}(u)-E_{a;h,j}(v)\big),\;\;\text{if}\;\; a\ne m,\\[3mm]
       \big(E_{a;i,k}(u)-E_{a;i,k}(v)\big)\big(E_{a;h,j}(v)-E_{a;h,j}(u)\big),\;\;\text{if}\;\; a=m,
  \end{array}\right.\\[3mm]
\lefteqn{\hspace*{-10mm}(u-v)[F_{a;i,j}(u), F_{a;h,k}(v)]=}\\
&&\qquad\left\{
  \begin{array}{ll}
   -(-1)^{\pa{a}} \big(F_{a;i,k}(u)-F_{a;i,k}(v)\big)\big(F_{a;h,j}(u)-F_{a;h,j}(v)\big),\;\;\text{if}\;\; a\ne m,\\                                                                                                                                                                                  [3mm]
   \big(F_{a;i,k}(u)-F_{a;i,k}(v)\big)\big(F_{a;h,j}(v)-F_{a;h,j}(u)\big) ,\;\;\text{if}\;\; a=m,
  \end{array}\right.
\end{eqnarray*}

\begin{equation}\notag
(u-v)[E_{a;i,j}(u), F_{b;h,k}(v)]=
  \delta_{a,b}(-1)^{\pa{b+1}}\big(D^{\prime}_{a;i,k}(u) D_{a+1;h,j}(u)-D_{a+1;h,j}(v) D^{\prime}_{a;i,k}(v)\big),
\end{equation}
\begin{align*}\notag
&(u-v)\big[E_{a;i,j}(u), E_{a+1;h,k}(v)\big]=\delta_{h,j}(-1)^{\overline{a+1}}\big(E_{a;i,q}(u)E_{a+1;q,k}(v)\\
&\qquad\qquad\qquad\qquad\qquad\qquad
-E_{a;i,q}(v)E_{a+1;q,k}(v)+E_{a,a+2;i,k}(v)-E_{a,a+2;i,k}(u)\big),\\[2mm]\notag
&(u-v)\big[F_{a;i,j}(u), F_{a+1;h,k}(v)\big]=\delta_{i,k}(-1)^{\overline{a+1}}\big(-F_{a+1;h,q}(v)F_{a;q,j}(u)\\
&\qquad\qquad\qquad\qquad\qquad\qquad
+F_{a+1;h,q}(v)F_{a;q,j}(v)-F_{a+2,a;h,j}(v)+F_{a+2,a;h,j}(u)\big),
\end{align*}
\begin{eqnarray*}
(u-v)[E_{a;i,j}(u), E_{b;h,k}(v)]=0,&&
\quad\text{ if \;\; $b>a+1$ \;\;or\;\; if \;\; $b=a+1$ and $h \neq j$},\\[3mm]
(u-v)[F_{a;i,j}(u), F_{b;h,k}(v)]=0,&&
\quad\text{ if \;\; $b>a+1$ \;\;or\;\; if \;\; $b=a+1$ and $i \neq k$},
\end{eqnarray*}
\begin{eqnarray*}
\big[E_{a;i,j}(u),[E_{a;h,k}(v),E_{b;f,g}(w)]\big]+
\big[E_{a;i,j}(v),[E_{a;h,k}(u),E_{b;f,g}(w)]\big]=0,&\;\; |a-b|\geq 1,\\[3mm]
\big[F_{a;i,j}(u),[F_{a;h,k}(v),F_{b;f,g}(w)]\big]+
\big[F_{a;i,j}(v),[F_{a;h,k}(u),F_{b;f,g}(w)]\big]=0,&\;\; |a-b|\geq 1,
\end{eqnarray*}\label{erelation}
where $\pa{a}:=0$ if $1\leq a\leq m$ and $\pa{a}:=1$ if $m+1\leq a\leq m+n$.
\end{proposition}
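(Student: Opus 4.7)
The plan is a case analysis on the block positions of the participating indices, reducing every identity to either Theorem~\ref{bknonsuper} (the non-super Brundan--Kleshchev relations), one of the small cases of Sections~5--6, or the supercommutation Lemma~\ref{corcommute}. Following the trichotomy introduced in the paragraphs preceding the proposition, the generators split into Part~A (block index $\leq m$), Part~B (block index $\geq m+1$), and Part~C (the bridge $a=m$), and my argument will handle each combination in turn.

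First, when all participating block indices lie in $\{1,\ldots,m\}$, every generator involved sits inside the subalgebra $Y(\gl_M) \subset Y(\gl_{M|N})$, every exponent $\pa{a}$ vanishes, and the required identity is simply the image under the shift map $\psi_{\mu_1+\cdots+\mu_{a-1}}$ of the corresponding Brundan--Kleshchev identity in a smaller $Y(\gl_{M'})$, using the matching formulas (4.3)--(4.5). When all participating block indices lie in $\{m+1,\ldots,m+n\}$, I compose additionally with the swap isomorphism $\zeta_{M|N}$: by Proposition~\ref{zetadef} its images of the $D$'s, $E$'s and $F$'s in $Y(\gl_{N|M})$ are, up to a global minus on each $E$ and $F$, the Gauss generators attached to the reversed composition $\mu^r$, which lie in the Part~A region of $Y(\gl_{N|M})$, so the previous step applies. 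When indices are drawn from both sides but are not adjacent to the cut at $m$, one generator sits in $Y(\gl_M)$ and the other in $\psi_M\bigl(Y(\gl_{0|N})\bigr)$, so Lemma~\ref{corcommute} forces the bracket to vanish, matching the identically zero right-hand sides.

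The genuinely new cases are those whose block indices cluster inside $\{m-1, m, m+1\}$. For binary relations with indices in $\{m, m+1\}$ I apply $\psi_{\mu_1+\cdots+\mu_{m-1}}$ to transport Proposition~\ref{1stsuper} (the $m=n=1$ case) into $Y_\mu$. For binary or ternary relations spanning $\{m-1, m, m+1\}$ I similarly apply $\psi_{\mu_1+\cdots+\mu_{m-2}}$ to transport Lemmas~\ref{mmn}--6.3 (the $m=2$, $n=1$ case) into $Y_\mu$. The companion small case $m=1$, $n=2$ is not treated separately in Section~6; I will obtain it by applying $\zeta$ to the $m=2$, $n=1$ identities, with the entry-position reversal $i \mapsto \mu_a+1-i$ dictated by Proposition~\ref{zetadef}.

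The main obstacle is the sign bookkeeping. The prefactors $(-1)^{\pa{a}}$, $(-1)^{\pa{a+1}}$, $(-1)^{\pa{b+1}}$, and the flipped order of $E$'s and $F$'s in the $a=m$ branches arise from two sources acting in tandem: the minus sign picked up by $\zeta$ on each $E$- or $F$-generator, and the reordering of odd generators implicit in the supercommutator bracket when transporting identities back and forth between $Y(\gl_{M|N})$ and $Y(\gl_{N|M})$. I plan to extract these signs identity by identity, comparing coefficients of $u^{-r}v^{-s}$, and of $u^{-r}v^{-s}w^{-t}$ for the ternary Serre relations; since the parity of any $D_{a;i,j}$, $E_{a;i,j}$, $F_{a;i,j}$ is determined by $\pa{a}$ alone, the signs are forced by parity once the reduction to the small cases has been set up, and I do not expect a conceptual obstacle beyond careful enumeration.
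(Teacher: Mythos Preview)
Your proposal is correct and follows essentially the same approach as the paper: the paper's own proof is the one-line statement ``This is the consequence of Theorem~\ref{bknonsuper}, Proposition~\ref{1stsuper}, Lemmas~\ref{mmn}--6.3, together with the maps $\psi_k$ and $\zeta_{M|N}$,'' and the A/B/C trichotomy you describe is exactly the reduction outlined in the paragraphs immediately preceding the proposition. Your plan simply fleshes out that sketch with explicit attention to the sign bookkeeping.
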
}
\begin{proof}
This is the consequence of Theorem \ref{bknonsuper}, Proposition \ref{1stsuper}, Lemmas \ref{mmn}$-$6.3, together with the maps $\psi_k$ and $\zeta_{M|N}$.
\end{proof}

The next lemma is a block generalization of \cite[Lemma 5]{Go} and the proof is essentially the same, except that we are using block decompositions. The relations are purely super phenomenons.
\begin{lemma}\label{extra}
Associated to $\mu=(\mu_1,\mu_2,\ldots\mu_m\,|\,\mu_{m+1},\ldots,\mu_{m+n})$ with $m>1$ and $n>1$, we have the following identities in $Y_{\mu}$.
\begin{equation}\label{eeee}
\big[\,[E_{m-1;i,j}^{(r)},E_{m;h,k}^{(1)}],[E_{m;h_0,k_0}^{(1)},E_{m+1;f,g}^{(s)}]\,\big]=0,
\end{equation}
\begin{equation}\label{ffff}
\big[\,[F_{m-1;i,j}^{(r)},F_{m;h,k}^{(1)}],[F_{m;h_0,k_0}^{(1)},F_{m+1;f,g}^{(s)}]\,\big]=0,
\end{equation}
for all admissible $f,g,h,i,j,k,h_0,k_0,r,s$.
\end{lemma}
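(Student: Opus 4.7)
The plan is to adapt the proof of \cite[Lemma~5]{Go}, which treats the extreme composition where every block has size~$1$, to the block setting. Since the four generators $E_{m-1;i,j}^{(r)}, E_{m;h,k}^{(1)}, E_{m;h_0,k_0}^{(1)}, E_{m+1;f,g}^{(s)}$ involve only the four consecutive block positions $m-1,m,m+1,m+2$, the first step is a reduction to $m=n=2$: applying the shift $\psi_{\mu_1+\cdots+\mu_{m-2}}$ together with Lemma~\ref{corcommute} and identities~(\ref{psid})--(\ref{psif}) allows us to pull the identity back to $Y(\gl_{\mu_{m-1}+\mu_m|N})$ equipped with composition $(\mu_{m-1},\mu_m|\mu_{m+1},\ldots,\mu_{m+n})$, so we may assume $m=2$; a dual reduction on the odd side, carried out by conjugating a shift by the swap map $\zeta_{M|N}$ (Proposition~\ref{zetadef}), further lets us take $n=2$. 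The $F$-identity~(\ref{ffff}) will then follow from the $E$-identity~(\ref{eeee}) by a final application of $\zeta_{M|N}$.

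In the reduced case $\mu=(\mu_1,\mu_2|\mu_3,\mu_4)$, denote $X:=E_{1;i,j}^{(r)}$ (even), $Y:=E_{2;h,k}^{(1)}$, $Y':=E_{2;h_0,k_0}^{(1)}$ (both odd of level one), and $Z:=E_{3;f,g}^{(s)}$ (even). Two low-degree facts will drive the calculation: (i) the super-bracket $[Y,Y']=0$, obtained by extracting the coefficient of $u^{-1}v^{0}$ in the series identity for $[E_{2}(u),E_{2}(v)]$ furnished by Proposition~\ref{pgpropo} (which is the super-case~(\ref{mnee}) of Proposition~\ref{1stsuper} transplanted into the $(\mu_2|\mu_3)$-block), and which in particular gives $Y^2=(Y')^2=0$; (ii) $[X,Z]=0$, by the block-separation clause of Proposition~\ref{pgpropo}, since blocks $1$ and $3$ are at distance $2$. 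In addition, relation~(\ref{ter}) yields the closed forms $[X,Y]=\delta_{h,j}E_{1,3;i,k}^{(r)}$ and $[X,Y']=\delta_{h_0,j}E_{1,3;i,k_0}^{(r)}$, while the coefficient-extraction method used in the proof of Proposition~\ref{1stsuper} expands $[Y',Z]$ as a multiple of $E_{2,4;h_0,g}^{(s)}$ plus a quadratic correction $\sum_q\sum_{p+p'=s,\,p,p'\ge1}E_{2;h_0,q}^{(p)}E_{3;q,g}^{(p')}$.

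The core of the argument is a repeated super-Jacobi manipulation. Writing
\[
[[X,Y],[Y',Z]]=[X,[Y,[Y',Z]]]-[Y,[X,[Y',Z]]],
\]
applying (i) to flip $[Y,[Y',Z]]=-[Y',[Y,Z]]$, and applying (ii) to collapse $[X,[Y',Z]]=[[X,Y'],Z]$, reduces the problem to controlling brackets of $E_{1,3;i,k}^{(r)}$ with $E_{3;f,g}^{(s)}$ and with products $E_{2;\cdot,\cdot}^{(p)}E_{3;\cdot,\cdot}^{(p')}$. The first of these is handled by unfolding $E_{1,3;i,k}^{(r)}=[E_{1;i,k'}^{(r)},E_{2;k',k}^{(1)}]$ via~(\ref{ter}) and invoking~(ii); the second is handled by Lemma~\ref{mmn}(c), which rewrites $[E_{1,3;i,j}(u),E_{2;h,k}(v)]$ as $E_{2;h,j}(v)\cdot[E_{1;i,g}(u),E_{2;g,k}(v)]$ and so plugs back the original bracket $[X,Y]$ in a controlled way. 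The principal obstacle is the quadratic correction term in $[Y',Z]$: its bracket with $X$ naively generates new terms, and verifying that these cancel requires exploiting both the $(Y,Y')$-symmetry forced by~(i) and the ternary Serre-type identities of Lemma~6.3 in the reduced setting. Once these cancellations are verified, collecting residues yields $[[X,Y],[Y',Z]]=0$.
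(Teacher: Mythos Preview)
Your reduction to the case $m=n=2$ via $\psi$ and $\zeta$, and the derivation of~(\ref{ffff}) from~(\ref{eeee}), match the paper. After that the routes diverge but quickly reconverge: since $[X,Y]=\delta_{h,j}E_{1,3;i,k}^{(r)}$ directly by~(\ref{ter}) and your own expansion gives $[Y',Z]=\delta_{f,k_0}\big(\sum_{p+p'=s}E_{2;h_0,q}^{(p)}E_{3;q,g}^{(p')}-E_{2,4;h_0,g}^{(s)}\big)$, the outer bracket is immediately $\delta_{h,j}\delta_{f,k_0}$ times the $z^{-s}$-coefficient of
\[
\big[E_{1,3;i,k}^{(r)},\ E_{2;h_0,q}(z)E_{3;q,g}(z)-E_{2,4;h_0,g}(z)\big],
\]
so the Jacobi detour is unnecessary. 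The paper reaches exactly the same expression by expanding both inner brackets in series form via Lemma~\ref{mmn}(b) (and its shift) and extracting the coefficient of $u^{-r}v^{0}w^{0}z^{-s}$ in $(u-v)(w-z)\big[[E_1(u),E_2(v)],[E_2(w),E_3(z)]\big]$.

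The genuine gap is in how you propose to show that this bracket vanishes. Lemma~\ref{mmn}(c) and the Serre-type identities of Lemma~6.3 are three-block identities in $Y_{(\mu_1,\mu_2|\mu_3)}$; they never mention $E_{3}$ or $E_{2,4}$, so they cannot by themselves certify a four-block vanishing, and the ``cancellations'' you invoke are never actually carried out. The paper's key move, absent from your outline, is to return to the RTT layer: from (\ref{t12}) and (\ref{t'12}) one reads off
\[
E_{1,3;i,j}(u)=D'_{1;i,p}(u)\,t_{p,\mu_1+\mu_2+j}(u),\qquad
E_{2;h,q}(v)E_{3;q,k}(v)-E_{2,4;h,k}(v)=t'_{\mu_1+h,\mu_1+\mu_2+\mu_3+r}(v)\,D_{4;r,k}(v),
\]
and the supercommutator then collapses because $[t_{p,\mu_1+\mu_2+j}(u),\,t'_{\mu_1+h,\mu_1+\mu_2+\mu_3+r}(v)]=0$ by~(\ref{usefull}) (the row and column indices are disjoint), while $D'_1(u)$ and $D_4(v)$ commute with the remaining factors for the same reason. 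This passage back to the $t,t'$ generators is the ingredient that makes the proof close.
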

\begin{proof}
By using the maps $\zeta_{M|N}$ and $\psi$, it is enough to show (\ref{eeee}) in the case $m=n=2$ only. Therefore, we want to show (7.1) in $Y_{(\mu_1,\mu_2|\mu_3,\mu_4)}$, i.e.,
\begin{equation}
\big[\,[E_{1;i,j}^{(r)},E_{2;h,k}^{(1)}]\,,\,[E_{2;h_0,k_0}^{(1)},E_{3;f,g}^{(s)}]\,\big]=0.
\end{equation}
We first claim that for all admissible $i,j,h,k,$
\begin{equation}\label{exeee}
[E_{1,3;i,j}(u)\,,\,E_{2;h,q}(v)E_{3;q,k}(v)-E_{2,4;h,k}(v)\,]=0,
\end{equation}
where the index $q$ is summed over $1,2,\ldots,\mu_3$.
To prove the claim, we use (\ref{t12}) and (\ref{t'12}) associated to the composition $(\mu_1,\mu_2\,|\,\mu_3,\mu_4)$ to derive the following identities.
\begin{eqnarray}
E_{1,3;i,j}(u)&=&D^{\prime}_{1;i,p}(u)t_{p,\mu_1+\mu_2+j}(u),\notag\\
E_{2;h,q}(v)E_{3;q,k}(v)-E_{2,4;h,k}(v)&=&t^{\prime}_{\mu_1+h,\mu_1+\mu_2+\mu_3+r}(v)D_{4;r,k}(v)\notag,
\end{eqnarray}
for all $1\le i\le\mu_1$, $1\le j\le\mu_3$, $1\le h\le\mu_2$, $1\le k\le\mu_4$, and the indices $p$, $q$, $r$ are summed over $\mu_1$, $\mu_3$, $\mu_4$, respectively. Substituting these identities into the bracket in (\ref{exeee}) and setting a notation $n_a:=\mu_1+\mu_2+\ldots+\mu_a$ for short, we have
\begin{align*}
[E&_{1,3;i,j}(u),E_{2;h,q}(v)E_{3;q,k}(v)-E_{2,4;h,k}(v)]\\
&=[D^{\prime}_{1;i,p}(u)t_{p,n_2+j}(u),t^{\prime}_{\mu_1+h,n_3+r}(v)D_{4;r,k}(v)]\\
&=D^{\prime}_{1;i,p}(u)t_{p,n_2+j}(u)t^{\prime}_{\mu_1+h,n_3+r}(v)D_{4;r,k}(v)
+t^{\prime}_{\mu_1+h,n_3+r}(v)D_{4;r,k}(v)D^{\prime}_{1;i,p}(u)t_{p,n_2+j}(u)\\
&=D^{\prime}_{1;i,p}(u)t_{p,n_2+j}(u)t^{\prime}_{\mu_1+h,n_3+r}(v)D_{4;r,k}(v)
+t^{\prime}_{\mu_1+h,n_3+r}(v)D^{\prime}_{1;i,p}(u)D_{4;r,k}(v)t_{p,n_2+j}(u)\\
&=D^{\prime}_{1;i,p}(u)t_{p,n_2+j}(u)t^{\prime}_{\mu_1+h,n_3+r}(v)D_{4;r,k}(v)
+D^{\prime}_{1;i,p}(u)t^{\prime}_{\mu_1+h,n_3+r}(v)t_{p,n_2+j}(u)D_{4;r,k}(v)\\
&=D^{\prime}_{1;i,p}(u)[t_{p,n_2+j}(u),
t^{\prime}_{\mu_1+h,n_3+r}(v)]D_{4;r,k}(v)=0,\text{ \; and the claim follows.}
\end{align*}
Note that in the above computation we have used the facts that
\[
D_{1;i,j}(u)=t_{ij}(u) \qquad \text{and} \qquad D^{\prime}_{4;i,j}(u)=t^{\prime}_{n_3+i,n_3+j}(u),
\]
therefore \qquad
$[D_{1;i,j}(u),t^{\prime}_{\mu_1+h,n_3+k}(v)]=0$\quad
and \quad $[D^{\prime}_{4;i,j}(u),t_{h,n_2+k}(v)]=0$ \;by (\ref{usefull}).

It suffices to prove (7.3) when $h=j$ and $k_0=f$, by Lemma \ref{mmn}($b$). Computing the following bracket by Lemma \ref{mmn}($b$), we have
\begin{align*}
(u-v&)(w-z)\big[\,[E_{1;i,j}(u),E_{2;j,k}(v)]\,,\,[E_{2;h_0,f}(w),E_{3;f,g}(z)]\,\big]\notag\\
&=\big[\,E_{1;i,q}(u)E_{2;q,k}(v)-E_{1;i,q}(v)E_{2;q,k}(v)+E_{1,3;i,k}(v)-E_{1,3;i,k}(u),\notag\\
&\; -E_{2;h_0,p}(w)E_{3;p,g}(z)+E_{2;h_0,p}(z)E_{3;p,g}(z)-E_{2,4;h_0,g}(z)+E_{2,4;h_0,g}(w)\,\big].\notag
\end{align*}
Taking its coefficient of $u^{-r}z^{-s}v^0w^0$, we have
\[
\sum_{t=1}^{s-1}[-E_{1,3;i,k}^{(r)},E_{2;h_0,p}^{(s-t)}E_{3;p,g}^{(t)}]+[-E_{1,3;i,k}^{(r)},-E_{2,4;h_0,g}^{(s)}],
\]
and it equals the coefficient of $u^{-r}z^{-s}$ in $[E_{1,3;i,k}(u),-E_{2;h_0,p}(z)E_{3;p,g}(z)+E_{2,4;h_0,g}(z)]$, which is zero by (7.4).

Finally, the coefficient of $u^{-r}z^{-s}v^0w^0$ in
\begin{equation*}
(u-v)(w-z)\big[\,[E_{1;i,j}(u),E_{2;j,k}(v)]\,,\,[E_{2;h_0,f}(w),E_{3;f,g}(z)]\,\big]
\end{equation*}
is exactly $-\big[\,[E_{1;i,j}^{(r)},E_{2;j,k}^{(1)}]\,,\,[E_{2;h_0,f}^{(1)},E_{3;f,g}^{(s)}]\,\big]$ and (7.3) follows.
\end{proof}

Recall the fact stated in Theorem 1 that $Y(\gl_{M|N})$ is generated as an algebra by the set $\big\lbrace D_{a;i,j}^{(r)}, D_{a;i,j}^{\prime(r)}, E_{a;i,j}^{(r)}, F_{a;i,j}^{(r)}\big\rbrace$. The following theorem describes the relations among these generators.

\begin{theorem}\label{srlns}
The following relations hold in $Y(\gl_{M|N})$ for all admissible indices $a,b,f,g$, $h,i,j,k,l,r,s,h_0,k_0$:
\begin{eqnarray}
D_{a;i,j}^{(0)}&=&\delta_{ij}\,,\\
\sum_{t=0}^{r}D_{a;i,p}^{(t)}D_{a;p,j}^{\prime (r-t)}&=&\delta_{r0}\delta_{ij}\,,\\
\big[D_{a;i,j}^{(r)},D_{b;h,k}^{(s)}\big]&=&
    \delta_{ab}\sum_{t=0}^{min(r,s)-1}\big(D_{a;h,j}^{(t)}D_{a;i,k}^{(r+s-1-t)}-D_{a;h,j}^{(r+s-1-t)}D_{a;i,k}^{(t)}\big),
\end{eqnarray}
\begin{align}\notag
&\lefteqn{[D_{a;i,j}^{(r)}, E_{b;h,k}^{(s)}]=}\\
&\;\left\{
  \begin{array}{ll}
     \displaystyle  (-1)^{\pa{b}} \big( \delta_{a,b}\delta_{h,j}\sum_{t=0}^{r-1} D_{a;i,p}^{(t)}E_{a;p,k}^{(r+s-1-t)}
         -\delta_{a,b+1}\sum_{t=0}^{r-1}D_{a;i,k}^{(t)}E_{b;h,j}^{(r+s-1-t)} \big), \;b \ne m,\\[4mm]
     \displaystyle    \delta_{a,b}\delta_{h,j}\sum_{t=0}^{r-1} D_{a;i,p}^{(t)}E_{a;p,k}^{(r+s-1-t)}
          +\delta_{a,b+1}\sum_{t=0}^{r-1}D_{a;i,k}^{(t)}E_{b;h,j}^{(r+s-1-t)}, \;b = m,
 \end{array}\right.\\[4mm]\notag
 \end{align}
 \begin{align}\notag
&\lefteqn{[D_{a;i,j}^{(r)}, F_{b;h,k}^{(s)}]=}\\
&\left\{
  \begin{array}{ll}
  \displaystyle (-1)^{\pa{b}} \big( -\delta_{a,b}\delta_{k,i}\sum_{t=0}^{r-1}
  F_{b;h,p}^{(r+s-1-t)}D_{a;p,j}^{(t)}
  +\delta_{a,b+1}\sum_{t=0}^{r-1}F_{b;i,k}^{(r+s-1-t)}D_{a;h,j}^{(t)} \big), \;b \ne m,\\[4mm]
   \displaystyle   -\delta_{a,b}\delta_{k,i}\sum_{t=0}^{r-1}F_{b;h,p}^{(r+s-1-t)}D_{a;p,j}^{(t)}
      -\delta_{a,b+1}\sum_{t=0}^{r-1}F_{b;i,k}^{(r+s-1-t)}D_{a;h,j}^{(t)}\,, \;b = m,
\end{array}\right.
\end{align}
\begin{equation}
[E_{a;i,j}^{(r)}, E_{a;h,k}^{(s)}]=
 \left\{
  \begin{array}{ll}
    \displaystyle      (-1)^{\pa{a}}\big(\sum_{t=1}^{s-1}E_{a;i,k}^{(t)}E_{a;h,j}^{(r+s-1-t)}
          -\sum_{t=1}^{r-1}E_{a;i,k}^{(t)}E_{a;h,j}^{(r+s-1-t)}\big),& a\neq m,\\[4mm]
     \displaystyle     \sum_{t=1}^{r-1}E_{a;i,k}^{(t)}E_{a;h,j}^{(r+s-1-t)}
          -\sum_{t=1}^{s-1}E_{a;i,k}^{(t)}E_{a;h,j}^{(r+s-1-t)} ,& a=m,\\[4mm]
  \end{array}\right.\\
\end{equation}
\begin{equation}
[F_{a;i,j}^{(r)}, F_{a;h,k}^{(s)}]=
\left\{
  \begin{array}{ll}
   \displaystyle (-1)^{\pa{a}}\big(\sum_{t=1}^{r-1}F_{a;i,k}^{(r+s-1-t)}F_{a;h,j}^{(t)}
   -\sum_{t=1}^{s-1}F_{a;i,k}^{(r+s-1-t)}F_{a;h,j}^{(t)}\big),& a\neq m,\\
   \displaystyle \sum_{t=1}^{r-1}F_{a;i,k}^{(r+s-1-t)}F_{a;h,j}^{(t)}-
   \sum_{t=1}^{s-1}F_{a;i,k}^{(r+s-1-t)}F_{a;h,j}^{(t)} \,,& a=m,\\
  \end{array}\right.\\
\end{equation}
\begin{equation}
[E_{a;i,j}^{(r)}, F_{b;h,k}^{(s)}]=
         -(-1)^{\pa{b+1}}\delta_{a,b} \sum_{t=0}^{r+s-1}D_{a+1;h,j}^{(r+s-1-t)} D^{\prime (t)}_{a;i,k}\,,
\end{equation}
\begin{align}
&[E_{a;i,j}^{(r+1)}, E_{a+1;h,k}^{(s)}]-[E_{a;i,j}^{(r)}, E_{a+1;h,k}^{(s+1)}]
=(-1)^{\pa{a+1}}\delta_{h,j}E_{a;i,q}^{(r)}E_{a+1;q,k}^{(s)}\,,\\[3mm]
&[F_{a;i,j}^{(r+1)}, F_{a+1;h,k}^{(s)}]-[F_{a;i,j}^{(r)}, F_{a+1;h,k}^{(s+1)}]
=-(-1)^{\pa{a+1}}\delta_{i,k}F_{a+1;h,q}^{(s)}F_{a;q,j}^{(r)}\,,\\[3mm]
&[E_{a;i,j}^{(r)}, E_{b;h,k}^{(s)}] = 0
\quad\text{\;\;if\;\; $b>a+1$ \;\;or\;\; \;if\;\;$b=a+1$ and $h \neq j$},\\[3mm]
&[F_{a;i,j}^{(r)}, F_{b;h,k}^{(s)}] = 0
\quad\text{\;\;if\;\; $b>a+1$ \;\;or\;\; \;if\;\;$b=a+1$ and $i \neq k$},
\end{align}
\begin{eqnarray}
&\big[E_{a;i,j}^{(r)},[E_{a;h,k}^{(s)},E_{b;f,g}^{(l)}]\big]+
\big[E_{a;i,j}^{(s)},[E_{a;h,k}^{(r)},E_{b;f,g}^{(l)}]\big]=0, &|a-b|\geq 1,\\[3mm]
&\big[F_{a;i,j}^{(r)},[F_{a;h,k}^{(s)},F_{b;f,g}^{(l)}]\big]+
\big[F_{a;i,j}^{(s)},[F_{a;h,k}^{(r)},F_{b;f,g}^{(l)}]\big]=0, &|a-b|\geq 1,\\[3mm]
&\big[\,[E_{m-1;i,j}^{(r)},E_{m;h,k}^{(1)}]\,,\,[E_{m;h_0,k_0}^{(1)},E_{m+1;f,g}^{(s)}]\,\big]=0, &\text{when\;\;}  m>1 ,n>1,\\[3mm]
&\big[\,[F_{m-1;i,j}^{(r)},F_{m;h,k}^{(1)}]\,,\,[F_{m;h_0,k_0}^{(1)},F_{m+1;f,g}^{(s)}]\,\big]=0, &\text{when\;\;}m>1, n>1,
\end{eqnarray}
where $\pa{a}:=0$ if $1\leq a\leq m$, $\pa{a}:=1$ if $m+1\leq a\leq m+n$, and the index p (resp. q) is summed over 1,$\ldots,\mu_1$ (resp. $1,\ldots,\mu_2$)
\end{theorem}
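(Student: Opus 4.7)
The plan is to obtain Theorem~7.2 by systematic coefficient extraction from the series identities collected in Proposition~\ref{pgpropo} and Lemma~\ref{extra}, together with the $D$-$D$ relations of Proposition~\ref{MNdd0}. Relations (7.5)--(7.7) are literally the coefficient form of Proposition~\ref{MNdd0}. Relations (7.16)--(7.17), which assert vanishing of certain brackets, follow immediately from the vanishing series identities in Proposition~\ref{pgpropo}: if a formal power series is zero then every coefficient is zero. Relations (7.20)--(7.21) are just the explicit statement of the identities in Lemma~\ref{extra} at the indicated coefficient.

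For the $D$-$E$ relation (7.8) I would argue as follows. Since $D_{a;i,j}^{(0)}$ is scalar, one has $[D_{a;i,j}(u), E_{b;h,k}(v)] = \sum_{r,s \geq 1}[D^{(r)},E^{(s)}]u^{-r}v^{-s}$. Multiplying through by $(u-v)$ converts the $u^{-r}v^{-s}$ coefficient on the left into the one-step difference $[D^{(r+1)},E^{(s)}] - [D^{(r)},E^{(s+1)}]$. On the right-hand side of the series identity, $E_{a;p,k}(v) - E_{a;p,k}(u)$ contributes to the $u^{-r}v^{-s}$ coefficient through its $v$-only part when $s \geq 1$ and through its $u$-only part when $s=0$. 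Reading off the boundary cases $(r=0,\,s\geq 1)$ and $(r\geq 1,\,s=0)$ directly gives (7.8) at $r=1$ and at $s=1$ respectively. A straightforward induction on $r$, using the identity $[D^{(r+1)},E^{(s)}] = [D^{(r)},E^{(s+1)}] + \delta_{hj}D^{(r)}E^{(s)}$ from the interior $u^{-r}v^{-s}$ coefficient, then collapses the one-step differences into the finite sum $\sum_{t=0}^{r-1}D^{(t)}E^{(r+s-1-t)}$. The $D$-$F$ relation (7.9) and the same-block relations (7.10)--(7.13) are obtained by the same extraction-plus-induction recipe applied to their corresponding series identities.

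The neighboring-block relations (7.14)--(7.15) are single-step identities: taking the coefficient of $u^{-r-1}v^{-s}$ in the relevant series identity of Proposition~\ref{pgpropo} produces the difference $[E_{a}^{(r+1)},E_{a+1}^{(s)}] - [E_{a}^{(r)},E_{a+1}^{(s+1)}]$ on the left, while the ``extra'' summands $E_{a,a+2}(v)-E_{a,a+2}(u)$ on the right contribute nothing to any interior $(r,s)$ coefficient, so one is left with exactly $\delta_{hj}E_{a;i,q}^{(r)}E_{a+1;q,k}^{(s)}$. The ternary relations (7.18)--(7.19) come from extracting the coefficient of $u^{-r}v^{-s}w^{-l}$ in their series counterparts; the symmetrization in $r\leftrightarrow s$ on the left is a direct translation of the symmetrization in $u\leftrightarrow v$ already visible in the series identity.

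The main technical obstacle is bookkeeping of the superalgebra signs $(-1)^{\pa{a}}$ and $(-1)^{\pa{b}}$, which force a case split in each of (7.8)--(7.11) according to whether the block index sits at the crossover value $m$ or not. One must verify that the signs produced by Proposition~\ref{pgpropo} transfer correctly through the coefficient extraction, and that the $b=m$ (respectively $a=m$) subcases yield precisely the flipped signs stated. Beyond this sign-tracking the calculation is a routine rerun of the $Y(\gl_N)$ arguments of \cite[Section~5]{BK1}, now allowed to straddle the $M|N$ divide through the shift map $\psi_k$ and the swap map $\zeta_{M|N}$ introduced in Section~4.
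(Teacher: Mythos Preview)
Your proposal is correct and matches the paper's approach: relations (7.5)--(7.7) come from Proposition~\ref{MNdd0}, the quartic Serre-type relations from Lemma~\ref{extra}, and all the rest by coefficient extraction from the series identities of Proposition~\ref{pgpropo}. The paper streamlines your extraction-plus-induction step via the single identity $\dfrac{S(v)-S(u)}{u-v}=\sum_{r,s\ge 1}S^{(r+s-1)}u^{-r}v^{-s}$ for $S(u)=\sum_{r\ge 0}S^{(r)}u^{-r}$, which delivers the finite sums in (7.8)--(7.12) in one stroke; note also that your equation labels from (7.13) onward are shifted by one relative to the paper's numbering.
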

\begin{proof}
(7.5)$-$(7.7) follow from Proposition \ref{MNdd0}, while the others come from {Proposition}~\ref{pgpropo}, Lemma \ref{extra} and the identity
\[
\dfrac{S(v)-S(u)}{u-v}=\sum_{r,s\ge 1}S^{(r+s-1)}u^{-r}v^{-s},
 \]
for any formal series $S(u)=\sum_{r\ge 0}S^{(r)}u^{-r}$.
\end{proof}
\begin{remark}
In the special case where all $\mu_i=1$, the right hand side of (7.10) and (7.11) degenerate to zero when a=m. See \cite[Theorem 3]{Go}.
\end{remark}

In fact, the relations in Theorem~\ref{srlns} are enough as defining relations of the super Yangian $Y(\gl_{M|N})$.
\begin{theorem}\label{Pg} 
The super Yangian $Y(\gl_{M|N})$ is generated by the elements
\begin{align*}
&\lbrace D_{a;i,j}^{(r)}, D_{a;i,j}^{\prime(r)} \,|\, 1\leq a\leq m+n, 1\leq i,j\leq \mu_a, r\geq 0\rbrace,\\
&\lbrace E_{a;i,j}^{(r)} \,|\, 1\leq a< m+n, 1\leq i\leq \mu_a, 1\leq j\leq\mu_{a+1}, r\geq 1\rbrace,\\
&\lbrace F_{a;i,j}^{(r)} \,|\, 1\leq a< m+n, 1\leq i\leq\mu_{a+1}, 1\leq j\leq \mu_a, r\geq 1\rbrace,
\end{align*}
subject to the relations (7.5)$-$(7.20).
\end{theorem}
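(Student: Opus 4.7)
The plan is as follows. Let $\widehat{Y}_\mu$ denote the abstract superalgebra generated by symbols $\widehat{D}_{a;i,j}^{(r)}$, $\widehat{D}_{a;i,j}^{\prime(r)}$, $\widehat{E}_{a;i,j}^{(r)}$, $\widehat{F}_{a;i,j}^{(r)}$ subject to the relations (7.5)--(7.20). By Theorem~\ref{gendef} and Theorem~\ref{srlns}, there is a surjective superalgebra homomorphism $\pi:\widehat{Y}_\mu \twoheadrightarrow Y(\gl_{M|N})$ sending each hatted generator to its unhatted counterpart. The goal is to prove that $\pi$ is injective, via a PBW-type argument modelled on \cite[\S6]{BK1} and Corollary~\ref{Yloop}.

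First I would equip $\widehat{Y}_\mu$ with a filtration $\widehat{L}_0 \subseteq \widehat{L}_1 \subseteq \cdots$ by declaring $\deg \widehat{D}_{a;i,j}^{(r)} = \deg \widehat{E}_{a;i,j}^{(r)} = \deg \widehat{F}_{a;i,j}^{(r)} = r-1$. The map $\pi$ is automatically filtered with respect to the loop filtration on $Y(\gl_{M|N})$, so it induces a surjective graded homomorphism $\mathrm{gr}\,\pi : \mathrm{gr}\,\widehat{Y}_\mu \twoheadrightarrow \mathrm{gr}^L Y(\gl_{M|N}) \cong U(\gl_{M|N}[t])$. The plan is then to construct a surjection in the opposite direction $U(\gl_{M|N}[t]) \twoheadrightarrow \mathrm{gr}\,\widehat{Y}_\mu$, which will force both maps to be isomorphisms; injectivity of $\pi$ follows by a standard filtration argument.

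To build the opposite surjection, I would inspect each of the relations (7.5)--(7.20) and extract its top degree component. The bracket relations (7.7)--(7.11) all have the form $[\widehat{X}^{(r)},\widehat{Y}^{(s)}] \equiv \text{bilinear in generators of degree} \le r+s-2$, so in the associated graded they become supercommutation relations that match exactly the Lie superbracket of $\gl_{M|N}[t]$ on the basis elements $(-1)^{\pa i}E_{ij}t^{r-1}$, up to sign conventions already tracked by the parity function $\pa a$. The Serre-type relations (7.16)--(7.17) and the exotic quartic relations (7.18)--(7.19) become trivial or Serre-type relations in the loop algebra in the top degree. Together with (7.12)--(7.15), this shows that $\mathrm{gr}\,\widehat{Y}_\mu$ is a quotient of $U(\gl_{M|N}[t])$.

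The remaining ingredient is a spanning statement: using relations (7.12)--(7.13) to convert any $\widehat{E}_{b;h,k}^{(s)}$ with $b-a \ge 2$ into nested commutators of $\widehat{E}_{a;\ast,\ast}^{(\ast)}$ with $a$'s differing by one (and similarly for $\widehat{F}$'s), and using (7.5)--(7.9) to push all $\widehat{F}$'s to the left, $\widehat{D}$'s to the middle, and $\widehat{E}$'s to the right, one shows that $\widehat{Y}_\mu$ is spanned by ordered supermonomials in the generators. This matches the PBW basis of $U(\gl_{M|N}[t])$, giving the reverse surjection. The main obstacle will be executing the spanning argument in the parabolic (block) setting cleanly: the generators $\widehat{E}_{a,b;i,j}^{(r)}$ for $b > a+1$ are only implicitly present via (\ref{ter}), and the ternary relations (7.16)--(7.19) must be invoked carefully to guarantee that these nested commutator definitions give well-defined elements of $\mathrm{gr}\,\widehat{Y}_\mu$ independent of the intermediate indices. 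Once the spanning is established, comparing with the PBW theorem (Proposition~\ref{PBWSY}) via $\mathrm{gr}\,\pi$ yields $\ker\pi = 0$, completing the proof.
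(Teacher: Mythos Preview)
Your overall architecture is the same as the paper's: define $\widehat{Y}_\mu$ abstractly, get the surjection onto $Y_\mu$, filter by $\deg=r-1$, and prove injectivity via a spanning/PBW comparison with $U(\gl_{M|N}[t])$. Two points deserve comment.

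First, the ``reverse surjection'' paragraph has the direction of inference backwards. Observing that the top-degree parts of (7.5)--(7.20) match certain relations in $\gl_{M|N}[t]$ does \emph{not} give you a map $U(\gl_{M|N}[t])\to \mathrm{gr}\,\widehat{Y}_\mu$; it gives a map out of the algebra presented by those top-degree relations, which is exactly $\mathrm{gr}\,\pi$ again. To go the other way you would need to know that the top-degree relations already present $U(\gl_{M|N}[t])$ on those generators, and that is precisely the open question. In practice this paragraph is redundant: once you prove spanning of $\widehat{Y}_\mu$ by ordered monomials and check (via $\mathrm{gr}\,\pi$ and Corollary~\ref{Yloop}) that their images are linearly independent, injectivity follows without ever constructing a reverse map.

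Second, the spanning argument is where all the work lives, and it is more than straightening $F<D<E$. You must also order \emph{within} the $E$'s (and $F$'s), which means controlling $[\pa{E}_{a,b;i,j}^{(r)},\pa{E}_{c,d;h,k}^{(s)}]$ in $\mathrm{gr}\,\widehat{Y}_\mu^+$ for \emph{all} $a<b$, $c<d$, not just adjacent blocks. The paper isolates this as an explicit claim (the commutator equals $(-1)^{\pa{b}}\delta_{b,c}\delta_{h,j}\pa{E}_{a,d;i,k}^{(r+s-1)}-(-1)^{\pa{a}\,\pa{b}+\pa{a}\,\pa{c}+\pa{b}\,\pa{c}}\delta_{a,d}\delta_{i,k}\pa{E}_{c,b;h,j}^{(r+s-1)}$) and proves it by a seven-case analysis of the relative positions of $a,b,c,d$. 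The quartic relations (7.19)--(7.20) are not just decoration: they are exactly what is needed to handle $[\pa{E}_{a,a+2},\pa{E}_{a+1,a+3}]=0$ when $a=m-1$, where the usual Jacobi/Serre manipulation picks up a sign obstruction from the odd generator $\pa{E}_{m,m+1}$. Your last paragraph correctly flags this region as the obstacle; just be aware that ``well-definedness of the nested commutators'' is the easy part, and the full commutator formula above is the statement you actually need.
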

\begin{proof}
Recall the notation $Y_{\mu}:=Y(\gl_{M|N})$ defined in Section 6. Let $\widehat{Y}_{\mu}$ denote the abstract algebra generated by the elements and relations as in the statement of Theorem~\ref{Pg}. We may further define all the other $E_{a,b;i,j}^{(r)}$ and $F_{b,a;i,j}^{(r)}$ in $\widehat{Y}_{\mu}$ by the relations (3.18), and it is not hard to show that this definition is independent of the choices of $k$ \cite[p.22]{BK1}. Let $\Gamma$ be the map
\[
\Gamma: \widehat{Y}_{\mu}\longrightarrow Y_{\mu}
\]
sending every element in $\widehat{Y}_{\mu}$ into the element in $Y_{\mu}$ with the same name. By Theorem~\ref{gendef} and Theorem~\ref{srlns}, the map $\Gamma$ is a surjective algebra homomorphism. Therefore, it remains to prove that $\Gamma$ is also injective. The injectivity will be proved in Section 8.
\end{proof}

\section{Injectivity of $\Gamma$}
Our strategy of proving the injectivity of $\Gamma$ is as follows: we find a spanning set for $\widehat{Y}_{\mu}$ (see Proposition~\ref{ind1}) and show that the images of the spanning set for $\widehat{Y}_{\mu}$ under $\Gamma$ is linearly independent in $Y_{\mu}$ (see Proposition~\ref{ind2}).
\begin{proposition}\label{ind1}
$\widehat{Y}_{\mu}$ is spanned as a vector space by the monomials in the elements $\lbrace D_{a;i,j}^{(r)}, E_{a,b;i,j}^{(r)}, F_{b,a;i,j}^{(r)}\rbrace$ taken in certain fixed order.
\end{proposition}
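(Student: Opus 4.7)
The plan is to establish the spanning property by a standard straightening argument, combined with induction on a suitable filtration inherited from the loop filtration on $Y_\mu$. First, fix a total order $\prec$ on the generating set so that (say) all $F$-generators precede all $D$-generators, which precede all $E$-generators, with a natural secondary order by block position, entry position, and upper index $r$; an \emph{ordered} monomial is one whose factors appear in weakly increasing order under $\prec$. Extend the collection of $E$- and $F$-symbols to include $E_{a,b;i,j}^{(r)}$ and $F_{b,a;i,j}^{(r)}$ for non-adjacent blocks via the inductive formula (3.18), and verify (as in \cite[p.~22]{BK1}) that the result is independent of the auxiliary index $k$ by means of the Serre-type relations (7.13)--(7.18) together with the new quartic relations (7.19)--(7.20).

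Next, introduce a filtration on $\widehat{Y}_\mu$ by setting $\deg X^{(r)} = r-1$ for each of $D_{a;i,j}^{(r)}$, $E_{a;i,j}^{(r)}$, $F_{a;i,j}^{(r)}$; by induction on $b-a$, using (3.18), one obtains $\deg E_{a,b;i,j}^{(r)} = \deg F_{b,a;i,j}^{(r)} = r-1$ as well. The crucial observation is that in each of the defining relations (7.5)--(7.20), when rewritten as a rule $yx = \pm xy + (\text{correction})$, the correction term lies in the span of products of filtration degree strictly less than $\deg(x)+\deg(y)$. For example, in (7.7) the right-hand side consists of products $D_{a;h,j}^{(t)} D_{a;i,k}^{(r+s-1-t)}$ of total degree $r+s-3$ (or $r+s-2$ when $t=0$), which is strictly less than the a priori degree $r+s-2$ of the commutator; analogous bounds hold for (7.8)--(7.20).

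The spanning statement now follows by a double induction: first on the total filtration degree of a monomial, and within a fixed filtration degree on the number of inversions with respect to $\prec$. Any monomial containing an adjacent pair $yx$ with $y \succ x$ can be replaced by $\pm xy$ plus a correction term whose monomial summands all have strictly smaller filtration degree; by the inner inductive hypothesis applied at the lower degree these summands are already linear combinations of ordered monomials, while the outer induction reduces the number of inversions by one in the original filtration degree. The induction terminates because at each filtration degree there are only finitely many inversions to resolve, and the base case of filtration degree zero is trivial.

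The main obstacle will be verifying the degree-drop claim uniformly across all relations, particularly for the \emph{super} relations (7.11) (the $EF$-relation involving $D'$) and the quartic relations (7.19)--(7.20), which have no analogue in the purely even setting and whose presence is essential here (cf.\ Remark 7.1). A related technical point is ensuring that the commutation rules involving the derived elements $E_{a,b;i,j}^{(r)}$ and $F_{b,a;i,j}^{(r)}$ for $b-a \geq 2$ can be deduced from the listed relations among the nearest-neighbor generators, so that the straightening procedure can in fact be carried out while remaining inside $\widehat{Y}_\mu$; this is where the Serre relations (7.13)--(7.18) and the new relations (7.19)--(7.20) do the decisive work, exactly in the spirit of \cite[Section~7]{BK1}.
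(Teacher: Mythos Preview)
Your overall strategy---pass to the loop-graded algebra and straighten monomials there---is the same as the paper's.  However, the specific mechanism you propose for the induction is wrong, and the error is not a detail but the heart of the matter.

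You claim that whenever you rewrite $yx = \pm xy + (\text{correction})$, the correction has filtration degree strictly less than $\deg x + \deg y$.  This is false.  Already in your own example (7.7) the $t=0$ summand is $\delta_{hj}D_{a;i,k}^{(r+s-1)} - \delta_{ik}D_{a;h,j}^{(r+s-1)}$, a single generator of degree $r+s-2 = \deg(D^{(r)})+\deg(D^{(s)})$; your parenthetical ``(or $r+s-2$ when $t=0$)'' is then asserted to be ``strictly less than $r+s-2$'', which is self-contradictory.  More importantly, for the derived elements $E_{a,b}^{(r)}$ with $b>a+1$ the situation is worse: in the associated graded one has (this is exactly the paper's Claim*)
\[
[\bar E_{a,b;i,j}^{(r)},\bar E_{b,d;h,k}^{(s)}]=(-1)^{\bar b}\delta_{h,j}\,\bar E_{a,d;i,k}^{(r+s-1)},
\]
a single generator of degree $r+s-2$, \emph{equal} to $\deg(\bar E^{(r)})+\deg(\bar E^{(s)})$.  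So the graded algebra $gr^L\widehat Y_\mu^+$ is not supercommutative, and your double induction on (filtration degree, number of inversions) cannot terminate.

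What actually makes the straightening work is not a degree drop but a drop in \emph{length}: the commutator of two generators is, in the graded algebra, a linear combination of single generators.  In other words, the $\bar E_{a,b;i,j}^{(r)}$ span a Lie sub-superalgebra of $gr^L\widehat Y_\mu^+$, so this graded algebra is a quotient of a universal enveloping algebra and the usual PBW argument (induction on length, then inversions) applies.  Proving that Lie-type commutation formula from the defining relations (7.5)--(7.20) is the entire content of the proof; the paper does it as Claim*, established through Lemmas~8.2--8.3 and a seven-case analysis depending on the relative positions of the block indices $a<b$ and $c<d$.  The quartic relations (7.19)--(7.20) enter precisely in the case $a=m-1$, $b=c=m+1$, $d=m+2$ (Lemma~8.3(c)), which is why they cannot be omitted.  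Your final paragraph correctly flags that deriving commutation rules for the $E_{a,b}$ is the crux, but you should expect this to be the bulk of the argument rather than a ``technical point'', and you should abandon the degree-drop claim in favour of the Lie-type formula.
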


\begin{proof}
Let $\widehat{Y}^0_{\mu}$ (resp. $\widehat{Y}^+_{\mu}$, $\widehat{Y}^-_{\mu}$) denote the subalgebras of
$\widehat{Y}_{\mu}$ generated by the elements $\lbrace D_{a;i,j}^{(r)}\rbrace$ (resp. $\lbrace E_{a,b;i,j}^{(r)}\rbrace$, $\lbrace F_{b,a;i,j}^{(r)}\rbrace$). By the relations in Theorem \ref{srlns}, $\widehat{Y}_{\mu}$ is spanned by the monomials where all $F$'s come before all $D$'s and all $D$'s come before all $E$'s.

Define a filtration on $\widehat{Y}_{\mu}$ by setting
\begin{equation*}
\text{deg}(D_{a;i,j}^{(r)})=\text{deg}(E_{a,b;i,j}^{(r)})=\text{deg}(F_{b,a;i,j}^{(r)})=r-1,\qquad\text{for all}\;\; r\ge 1,
\end{equation*}
and denote the associated graded algebra by $gr^L\widehat{Y}_{\mu}$. The above argument implies that the multiplication map is surjective,
\[
gr^L\widehat{Y}^-_{\mu}\otimes gr^L\widehat{Y}^0_{\mu}\otimes gr^L\widehat{Y}^+_{\mu}\twoheadrightarrow gr^L\widehat{Y}_{\mu}.
\]
Moreover, $gr^L\widehat{Y}^0_{\mu}$ is commutative by Proposition \ref{MNdd0}. It follows that $\widehat{Y}^0_{\mu}$ is spanned by the monomials in $\lbrace D_{a;i,j}^{(r)}\rbrace$ in certain fixed order. Hence it is enough to show that $gr^L\widehat{Y}^+_{\mu}$ is spanned by the monomials in $E$'s in certain order, and the swap map $\zeta_{N|M}$ will show that $gr^L\widehat{Y}^-_{\mu}$ is spanned by the monomials in $F$'s in certain order. 

We denote the image of $E_{a,b;i,j}^{(r)}$ in the graded algebra $gr_{r-1}^L\widehat{Y}_{\mu}^+$ by $\pa{E}_{a,b;i,j}^{(r)}$. We have the following.\\[2mm]
{\bf{Claim*:}}
For all admissible $a,b,c,d,i,j,h,k,r,s$, we have
\begin{equation}\label{ind3}
[\pa{E}_{a,b;i,j}^{(r)},\pa{E}_{c,d;h,k}^{(s)}]=(-1)^{\pa{b}}\delta_{b,c}\delta_{h,j}\pa{E}_{a,d;i,k}^{(r+s-1)}
-(-1)^{\pa{a}\,\pa{b}+\pa{a}\,\pa{c}+\pa{b}\,\pa{c}}\delta_{a,d}\delta_{i,k}\pa{E}_{c,b;h,j}^{(r+s-1)}.
\end{equation}

Assuming the claim, we have that the graded algebra $gr^L\widehat{Y}_{\mu}^+$ is spanned by the monomials in $\lbrace\pa{E}_{a,b;i,j}^{(r)}\rbrace$ in certain order and hence $\widehat{Y}_{\mu}^+$ is spanned by the monomials in $\lbrace E_{a,b;i,j}^{(r)}\rbrace$ in certain order as well and therefore Proposition~\ref{ind1} is established.
\end{proof}

To establish the claim*, we first prove some special cases.
\begin{lemma} The following identities hold in \text{gr}$^L\widehat{Y}_{\mu}^+$:
 \begin{enumerate}
  \item[(a)] \begin{equation}
              [\pa{E}_{a,a+1;i,j}^{(r)},\pa{E}_{b,b+1;h,k}^{(s)}]=0,\;\text{if}\;|a-b|\ne 1,
             \end{equation}
  \item[(b)] \begin{equation}
              [\pa{E}_{a,a+1;i,j}^{(r)},\pa{E}_{b,b+1;h,k}^{(s)}]=
              [\pa{E}_{a,a+1;i,j}^{(r-1)},\pa{E}_{b,b+1;h,k}^{(s+1)}],\; \text{if}\; |a-b|=1,
             \end{equation}
  \item[(c)] \begin{equation}
              \big[\pa{E}_{a,a+1;i,j}^{(r)},[\pa{E}_{a,a+1;h,k}^{(s)},\pa{E}_{b,b+1;f,g}^{(t)}]\big]=
              -\big[\pa{E}_{a,a+1;i,j}^{(s)},[\pa{E}_{a,a+1;h,k}^{(r)},\pa{E}_{b,b+1;f,g}^{(t)}]\big],
             \end{equation}
             \; \text{if}\; $|a-b|=1$,
  \item[(d)] \begin{equation}
              \pa{E}_{a,b;i,j}^{(r)}=(-1)^{\pa{b-1}}[\pa{E}_{a,b-1;i,h}^{(r)},\pa{E}_{b-1,b;h,j}^{(1)}]
              =(-1)^{\pa{a+1}}[\pa{E}_{a,a+1;i,k}^{(1)},\pa{E}_{a+1,b;k,j}^{(r)}],
              \end{equation}
              for all $b>a+1$ and any $1\leq h\leq\mu_{b-1}$, $1\leq k\leq\mu_{a+1}$.
 \end{enumerate}
\end{lemma}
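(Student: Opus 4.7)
The plan is to pass each identity to the associated graded algebra by carefully tracking filtration degrees of the defining relations in Theorem~\ref{srlns} together with the inductive formula (3.18). For part (a), split on $|a-b|$: when $|a-b|\geq 2$, relation (7.12) kills the bracket already in $\widehat{Y}_\mu^+$. When $a=b$, relation (7.8) expresses the bracket as a sum of quadratic terms, each of filtration degree $r+s-3$, strictly below the expected top degree $r+s-2$; thus the image in $gr^L_{r+s-2}$ vanishes. For part (b), if $h\neq j$ the bracket is zero by (7.12); if $h=j$, the right-hand side of (7.10) is a single product of filtration degree $r+s-2$, strictly below the $r+s-1$ where the two commutators on the left sit, so passing to $gr^L_{r+s-1}$ gives the shift identity. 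The case $b=a-1$ reduces to $b=a+1$ by super-antisymmetry. Part (c) is immediate from (7.16), whose right-hand side is already zero in $\widehat{Y}_\mu^+$.

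For part (d), the first equality is the image of (3.18) in $gr^L_{r-1}$; both sides live exactly in that degree, so no filtration drop occurs. The second equality is proved by induction on $b-a\geq 2$. The base case $b=a+2$ combines the first equality with iterated application of part (b): starting from $[\pa{E}_{a,a+1}^{(r)},\pa{E}_{a+1,a+2}^{(1)}]$, each invocation of (b) shifts one unit from the first argument to the second, and after $r-1$ iterations one obtains $[\pa{E}_{a,a+1}^{(1)},\pa{E}_{a+1,a+2}^{(r)}]$. For the inductive step $b-a=k\geq 3$, apply the first equality, then the induction hypothesis to $\pa{E}_{a,b-1;i,h}^{(r)}$, to write
\[
\pa{E}_{a,b;i,j}^{(r)} = (-1)^{\pa{b-1}}(-1)^{\pa{a+1}}\bigl[\bigl[\pa{E}_{a,a+1}^{(1)},\pa{E}_{a+1,b-1}^{(r)}\bigr],\pa{E}_{b-1,b}^{(1)}\bigr].
\]
Expand the outer double bracket via the super-Jacobi identity. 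The cross-term contains $[\pa{E}_{a,a+1}^{(1)},\pa{E}_{b-1,b}^{(1)}]$, which vanishes by part (a) since $|a-(b-1)|=b-1-a\geq 2$. The remaining term reassembles, via the first equality of (d) applied to $\pa{E}_{a+1,b}^{(r)}$, into $(-1)^{\pa{a+1}}[\pa{E}_{a,a+1}^{(1)},\pa{E}_{a+1,b}^{(r)}]$ after the two factors of $(-1)^{\pa{b-1}}$ cancel.

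The main obstacle is the inductive step of part (d). One must carefully bookkeep the $\mathbb{Z}_2$-signs introduced by super-Jacobi, verify that the block-position distance at every inductive level satisfies the hypothesis of part (a) so that the unwanted cross-term actually drops, and check that the independence of the right-hand side on the auxiliary summation indices (inherited from (3.18)) is preserved in the graded setting.
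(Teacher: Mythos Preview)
Your argument is correct and follows the same route as the paper, which simply cites the pertinent relations of Theorem~\ref{srlns} and the inductive definition of $E_{a,b;i,j}^{(r)}$; your filtration-degree bookkeeping and your inductive proof of the second equality in (d) in fact fill in detail that the paper leaves implicit. The only issues are systematic equation-number slips: what you call (7.12) is (7.15), your (7.8) is (7.10), your (7.10) is (7.13), and your (7.16) is (7.17) --- fix these before submission.
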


\begin{proof}
(8.2) and (8.3) follow from (7.15) and (7.13). (8.4) follows from (7.17) and (8.5) follows from (\ref{ter}).
\end{proof}

\begin{lemma}
The following identities hold in gr$^L\widehat{Y}_{\mu}^+$:
 \begin{enumerate}
  \item[(a)]
    \begin{equation}
     [\pa{E}_{a,a+2;i,j}^{(r)},\pa{E}_{a+1,a+2;h,k}^{(s)}]=0,\;\; \text{for all}\;\; 1\leq a\leq m+n-2,
    \end{equation}
  \item[(b)]
    \begin{equation}
     [\pa{E}_{a,a+1;i,j}^{(r)},\pa{E}_{a,a+2;h,k}^{(s)}]=0,\;\; \text{for all}\;\; 1\leq a\leq m+n-2,
    \end{equation}
  \item[(c)]
    \begin{equation}
     [\pa{E}_{a,a+2;i,j}^{(r)},\pa{E}_{a+1,a+3;h,k}^{(s)}]=0,\;\; \text{for all}\;\; 1\leq a\leq m+n-3,
    \end{equation}
  \item[(d)]
    \begin{equation}
     [\pa{E}_{a,b;i,j}^{(r)},\pa{E}_{c,c+1;h,k}^{(s)}]=0,\;\; \text{for all}\;\; 1\leq a<c<b\leq m+n.
    \end{equation}
 \end{enumerate}
\end{lemma}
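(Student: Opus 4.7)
The plan is to work throughout in the associated graded algebra $gr^L\widehat{Y}_\mu^+$, where the relations of Theorem~\ref{srlns} simplify: the right-hand side of the same-block relation (7.10) lies in strictly lower filtration degree, so $[\pa{E}_{a,a+1;i,j}^{(r)},\pa{E}_{a,a+1;h,k}^{(s)}]=0$ in the graded; similarly the adjacent-block shift (7.13) becomes exact, yielding (8.3), and the non-adjacent vanishing (7.15) upgrades to (8.2). Together with the recursion (8.5) for compound E's (in which the middle index may be chosen freely) and the graded forms of the Serre relation (7.17) and of (7.19), these are the only tools I will use.

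For part (a), I would write $\pa{E}_{a,a+2;i,j}^{(r)}=(-1)^{\pa{a+1}}[\pa{E}_{a,a+1;i,m}^{(r)},\pa{E}_{a+1,a+2;m,j}^{(1)}]$ via (8.5) and expand $[[\cdot,\cdot],\pa{E}_{a+1,a+2;h,k}^{(s)}]$ by super-Jacobi. One inner bracket is same-block (vanishing); the other is adjacent-block, which vanishes by (8.2) as soon as we choose $m\ne h$, i.e.\ whenever $\mu_{a+1}\ge 2$. The obstructing case is $\mu_{a+1}=1$, forcing $m=h=1$; then the surviving inner bracket evaluates via (8.3) and (8.5) to a scalar multiple of $\pa{E}_{a,a+2;i,k}^{(r+s-1)}$. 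Iterating super-Jacobi with the dual formula of (8.5) produces the shift identity
\[
[\pa{E}_{a,a+2;i,j}^{(r)},\pa{E}_{a+1,a+2;1,k}^{(s)}]=[\pa{E}_{a,a+2;i,j}^{(r+s-1)},\pa{E}_{a+1,a+2;1,k}^{(1)}],
\]
while the graded form of (7.17), applied with block indices $a_0=a+1$, $b_0=a$ (so the two inner brackets collapse to multiples of $\pa{E}_{a,a+2}^{(\cdot)}$), yields the antisymmetry
\[
[\pa{E}_{a,a+2;i,j}^{(r)},\pa{E}_{a+1,a+2;1,k}^{(s)}]=-[\pa{E}_{a,a+2;i,j}^{(s)},\pa{E}_{a+1,a+2;1,k}^{(r)}].
\]
Evaluating both at $(r,s)=(N,1)$ and $(1,N)$ forces the commutator to equal minus itself, hence vanish.

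Part (b) follows the same template applied to $\pa{E}_{a,a+2;h,k}^{(s)}$: decompose via (8.5), expand by super-Jacobi, and in the dimension-one obstruction run the shift-plus-antisymmetry argument with (7.17) instantiated at $a_0=a$, $b_0=a+1$. Part (c) is the most delicate: decomposing both compound E's via (8.5) rewrites $[\pa{E}_{a,a+2;i,j}^{(r)},\pa{E}_{a+1,a+3;h,k}^{(s)}]$ as a fourfold bracket whose outer structure is precisely the left-hand side of (7.19) of Lemma~\ref{extra}, which vanishes immediately in the cross-parity subcase $a+1=m$; in all other (same-parity) subcases the inner brackets produced by super-Jacobi are either same-block, non-adjacent, or reduce to instances of (a) and (b), and the argument completes as in the classical Brundan--Kleshchev case.

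For part (d) I would induct on $b-a\ge 2$; the base case $b-a=2$ is precisely (a). In the inductive step, depending on whether $c=b-1$ or $c<b-1$, I would choose one of the two decompositions of $\pa{E}_{a,b;i,j}^{(r)}$ in (8.5), expanding either from the right end or the left end. Super-Jacobi then produces two inner brackets: one is (d) with strictly smaller $b-a$ and vanishes by the inductive hypothesis, while the other is a basic-versus-basic bracket vanishing by (8.2) except in a single adjacent subcase, which after super-antisymmetry reduces to (a) or to a previously handled case. The main obstacle throughout the proof is the dimension-one collision of the free index in (a) and (b), which is resolved only by the shift-plus-antisymmetry argument described above; once that is in place, (c) follows from (7.19) and the earlier parts, and (d) falls out by routine induction.
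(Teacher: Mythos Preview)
Your treatment of (a) and (b) is correct but more laborious than necessary. The paper avoids any case split on $\mu_{a+1}$: it applies the Serre antisymmetry (8.4) to swap the superscripts $(1)$ and $(s)$ on the two $\pa{E}_{a+1,a+2}$ factors, uses the shift (8.3) to bring both down to $(1)$, and then invokes (8.4) at $r=s$ to force vanishing. Your shift-plus-antisymmetry argument is essentially the same two ingredients, but you only deploy them after the $\mu_{a+1}\ge 2$ shortcut fails; doing it uniformly from the start removes the case split entirely. Part (c) is handled in essentially the same way in both treatments.

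There is a genuine gap in your argument for (d). In the subcase $c<b-1$ with $c=b-2$, the surviving inner bracket $[\pa{E}_{b-1,b}^{(1)},\pa{E}_{b-2,b-1}^{(s)}]$ evaluates (up to sign and entry-index matching) to $\pa{E}_{b-2,b}^{(s)}$, and the full expression becomes a multiple of
\[
\bigl[\pa{E}_{a,b-1}^{(r)},\pa{E}_{b-2,b}^{(s)}\bigr]=\bigl[\pa{E}_{a,c+1}^{(r)},\pa{E}_{c,c+2}^{(s)}\bigr].
\]
This is not an instance of (a), nor of your inductive hypothesis on (d) (the second factor is not a basic generator), nor of any previously handled case unless $b=a+3$, where it coincides with (c). For $b>a+3$ it is a genuinely new statement---exactly the auxiliary identity (8.11) in the paper. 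The paper's strategy for (d) is to first isolate and prove the two special cases (8.10) $[\pa{E}_{a,c+1}^{(r)},\pa{E}_{c,c+1}^{(s)}]=0$ and (8.11) $[\pa{E}_{a,c+1}^{(r)},\pa{E}_{c,c+2}^{(s)}]=0$, each by induction on $c-a$ with base cases (a) and (c) respectively, and then deduce the general (d) from these by induction on $b-c$. Your induction on $b-a$ can be salvaged, but only by inserting a proof of (8.11) at this point; the claim that the adjacent subcase ``reduces to (a)'' is false as stated.
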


\begin{proof}
(a) By (8.5) and (8.4), we have
 \begin{align*}
 (-1)^{\pa{a+1}}[\pa{E}^{(r)}_{a,a+2;i,j}\,,\,\pa{E}^{(s)}_{a+1,a+2;h,k}]=&
  \big[\,[\pa{E}^{(r)}_{a,a+1;i,f},\pa{E}^{(1)}_{a+1,a+2;f,j}]\,,\,\pa{E}^{(s)}_{a+1,a+2;h,k}\big]\\
 =&-\big[\,[\pa{E}^{(r)}_{a,a+1;i,f},\pa{E}^{(s)}_{a+1,a+2;f,j}]\,,\,\pa{E}^{(1)}_{a+1,a+2;h,k}\,\big]\\
 =&-\big[\,[\pa{E}^{(r+s-1)}_{a,a+1;i,f},\pa{E}^{(1)}_{a+1,a+2;f,j}]\,,\,\pa{E}^{(1)}_{a+1,a+2;h,k}\,\big]
 \end{align*}
 and the last term is zero by (8.4).

(b) The same method in (a) works, except that we apply (8.5) on the term $\pa{E}_{a,a+2;h,k}^{(s)}$.

(c) It takes some effort in this case due to the $\mathbb{Z}_2$-grading. First assume that $a\neq~{m-1}$. We apply (8.5) on the left hand side of (8.8) and use the super-Jacobi identity:
\begin{align*}
 [\pa{E}_{a,a+2;i,j}^{(r)},\pa{E}_{a+1,a+3;h,k}^{(s)}]&=
 (-1)^{\pa{a+1}+\pa{a+2}}\big[\,[\pa{E}_{a,a+1;i,h}^{(r)},\pa{E}_{a+1,a+2;h,j}^{(1)}]
 \,,\,[\pa{E}_{a+1,a+2;h,j}^{(1)},\pa{E}_{a+2,a+3;j,k}^{(s)}]\,\big]\\
&=(-1)^{\pa{a+1}+\pa{a+2}}\Big[\,\big[\,[\pa{E}_{a,a+1;i,h}^{(r)},\pa{E}_{a+1,a+2;h,j}^{(1)}],
 \pa{E}_{a+1,a+2;h,j}^{(1)}\big],\pa{E}_{a+2,a+3;j,k}^{(s)}\Big]\\
&+\varepsilon(-1)^{\pa{a+1}+\pa{a+2}}\Big[\pa{E}_{a+1,a+2;h,j}^{(1)},
 \big[\,[\pa{E}_{a,a+1;i,h}^{(r)},\pa{E}_{a+1,a+2;h,j}^{(1)}],\pa{E}_{a+2,a+3;j,k}^{(s)}\big]\,\Big],
\end{align*}
 where $\varepsilon$ is $(-1)^{\pa{\alpha} \pa{\beta}}$, $\pa{\alpha}$ is the degree of $[\pa{E}_{a,a+1;i,h}^{(r)},\pa{E}_{a+1,a+2;h,j}^{(1)}]$ and $\pa{\beta}$ is the degree of $\pa{E}_{a+1,a+2;h,j}^{(1)}$. By (8.4), the first term is zero. Moreover, by our assumption that $a\neq m-1$, the elements $\pa{E}_{a+1,a+2;h,j}^{(1)}$ is even and hence $\varepsilon$ is 1. Keep using the super-Jacobi identity and Lemma 8.2, we may deduce that the above equals to
\begin{align*}
&(-1)^{\pa{a+1}+\pa{a+2}}\Big[\pa{E}_{a+1,a+2;h,j}^{(1)},
 \big[\,[\pa{E}_{a,a+1;i,h}^{(r)},\pa{E}_{a+1,a+2;h,j}^{(1)}],\pa{E}_{a+2,a+3;j,k}^{(s)}\big]\,\Big]\\
=&(-1)^{\pa{a+1}+\pa{a+2}}\Big[\pa{E}_{a+1,a+2;h,j}^{(1)},
 \big[\pa{E}_{a,a+1;i,h}^{(r)},[\pa{E}_{a+1,a+2;h,j}^{(1)},\pa{E}_{a+2,a+3;j,k}^{(s)}]\,\big]\,\Big]+0\\
=&(-1)^{\pa{a+1}+\pa{a+2}}\big[\,[\pa{E}_{a+1,a+2;h,j}^{(1)},\pa{E}_{a,a+1;i,h}^{(r)}],
[\pa{E}_{a+1,a+2;h,j}^{(1)},\pa{E}_{a+2,a+3;j,k}^{(s)}]\,\big]+0\\
=&-(-1)^{\pa{a+1}+\pa{a+2}}\big[\,[\pa{E}_{a,a+1;i,h}^{(r)},\pa{E}_{a+1,a+2;h,j}^{(1)}]
,[\pa{E}_{a+1,a+2;h,j}^{(1)},\pa{E}_{a+2,a+3;j,k}^{(s)}]\,\big]\\
=&-[\pa{E}_{a,a+2;i,j}^{(r)},\pa{E}_{a+1,a+3;h,k}^{(s)}].
\end{align*}
Therefore, (8.8) is true for all $a\neq m-1$.

Now let $a=m-1$, same method shows that
\begin{align*}
&[\,\pa{E}_{m-1,m+1;i,j}^{(r)},\pa{E}_{m,m+2;h,k}^{(s)}\,]\\
=&\big[\,(-1)^{\pa{m}}\,[\,\pa{E}_{m-1,m;i,f}^{(r)}\,,\,\pa{E}_{m,m+1;f,j}^{(1)}\,]\,,\,
(-1)^{\pa{m+1}}\,[\,\pa{E}_{m,m+1;h,g}^{(1)}\,,\,\pa{E}_{m+1,m+2;g,k}^{(s)}\,]\,\big]\\
=&\pm \big[\,[\pa{E}_{m-1,m;i,f}^{(r)}\,,\,\pa{E}_{m,m+1;f,j}^{(1)}]\,,\,
[\pa{E}_{m,m+1;h,g}^{(1)}\,,\,\pa{E}_{m+1,m+2;g,k}^{(s)}]\,\big],
\end{align*}
which is zero by (\ref{eeee}) and hence (8.8) is true when $a=m-1$ as well.

(d) By super-Jacobi identity and (8.5), it is enough to show the following 2 cases:
\begin{equation}
[\pa{E}^{(r)}_{a,c+1;i,j},\pa{E}^{(s)}_{c,c+1;h,k}]=0,\;  \text{for all} \; a<c,
\end{equation}
and
\begin{equation}
[\pa{E}^{(r)}_{a,c+1;i,j},\pa{E}^{(s)}_{c,c+2;h,k}]=0,\;  \text{for all} \; a<c.
\end{equation}
They can be proved by using (8.2)$-$(8.8) and induction on $c-a$. We show (8.10) in detail here. When $c=a+1$, it follows directly from (8.6). Now assume $c>a+1$. By (8.5) and super-Jacobi identity, we have
\begin{multline*}
[\pa{E}^{(r)}_{a,c+1;i,j},\pa{E}^{(s)}_{c,c+1;h,k}]=
\big[\,(-1)^{\pa{a+1}}\,[\pa{E}^{(1)}_{a,a+1;i,f}\,,\,\pa{E}^{(r)}_{a+1,c+1;f,j}\,]\,,\,\pa{E}^{(s)}_{c,c+1;h,k}\,\big]\\
=(-1)^{\pa{a+1}}\big[\pa{E}^{(1)}_{a,a+1;i,f},[\pa{E}^{(r)}_{a+1,c+1;f,j},\pa{E}^{(s)}_{c,c+1;h,k}]\,\big]
\pm \big[\pa{E}^{(r)}_{a+1,c+1;f,j},[\pa{E}^{(1)}_{a,a+1;i,f},\pa{E}^{(s)}_{c,c+1;h,k}]\,\big].
\end{multline*}
The first term is zero by induction hypothesis and the second term is also zero by (8.2).
\end{proof}

\begin{proof}[Proof of claim*]
 Without loss of generality, we may assume that $a\leq c$. The proof is split into 7 cases and we prove them one by one.
\begin{description}
\item[Case 1.] $a<b<c<d$:\\
It follows directly from (8.2) and (8.5) that the bracket in (8.1) is zero.

\item[Case 2.] $a<b=c<d$:\\
By (8.3) and (8.5), we have
\begin{equation}
[\pa{E}^{(r+1)}_{b-1,b;i_1,j}\,,\,\pa{E}^{(s+1)}_{b,b+1;h,k_1}]
=[\pa{E}^{(r+s+1)}_{b-1,b;i_1,j}\,,\,\pa{E}^{(1)}_{b,b+1;h,k_1}]
=\delta_{h,j}(-1)^{\pa{b}}\pa{E}^{(r+s+1)}_{b-1,b+1;i_1,k_1}.
\end{equation}
Note that when $h\neq j$, the bracket is zero by (7.13) and hence the $\delta_{h,j}$ comes out.
Taking the bracket on both sides of the equation (8.12) with the elements
\[
\pa{E}^{(1)}_{b+1,b+2;k_1,k_2}, \pa{E}^{(1)}_{b+2,b+3;k_2,k_3}, \cdots , \pa{E}^{(1)}_{d-1,d;k_{d-1},k}
\]
from the right and using the super-Jacobi identity, (8.2) and (8.5), we have
\begin{equation}
[\pa{E}^{(r+1)}_{b-1,b;i_1,j}\,,\,\pa{E}^{(s+1)}_{b,d;h,k}]=\delta_{h,j}(-1)^{\pa{b}}\pa{E}^{(r+s+1)}_{b-1,d;i_1,k}\,.
\end{equation}
Taking brackets on both sides of (8.13) with the elements
\[
\pa{E}^{(1)}_{b-2,b-1;i_2,i_1}, \pa{E}^{(1)}_{b-3,b-2;i_3,i_2},\cdots,\pa{E}^{(1)}_{a,a+1;i,i_{b-a-1}}
\]
from the left and using exactly the same method as above, we have
\[
[\pa{E}^{(r)}_{a,b;i,j}\,,\,\pa{E}^{(s)}_{b,d;h,k}]=\delta_{h,j}(-1)^{\pa{b}}\pa{E}^{(r+s-1)}_{a,d;i,k},
\;\text{as desired}.
\]
\item[Case 3.] $a<c<b=d$:\\
Using the super-Jacobi identity, (8.5) and (8.9), we have
\begin{align*}
[\pa{E}^{(r)}_{a,b;i,j}&,\pa{E}^{(s)}_{c,b;h,k}]
=\big[\pa{E}^{(r)}_{a,b;i,j},(-1)^{\pa{c+1}}[\pa{E}^{(1)}_{c,c+1;h,f_1},\pa{E}^{(s)}_{c+1,b;f_1,k}]\,\big]\\
&=(-1)^{\pa{c+1}}\big[\,[\pa{E}^{(r)}_{a,b;i,j}\,,\,\pa{E}^{(1)}_{c,c+1;h,f_1}],\pa{E}^{(s)}_{c+1,b;f_1,k}\big]\\
&\quad\pm(-1)^{\pa{c+1}}\big[\pa{E}^{(1)}_{c,c+1;h,f_1}\,,\,[\pa{E}^{(r)}_{a,b;i,j}\,,\,\pa{E}^{(s)}_{c+1,b;f_1,k}]\,\big]\\
&=0\pm (-1)^{\pa{c+1}}\big[\pa{E}^{(1)}_{c,c+1;h,f_1}\,,\,[\pa{E}^{(r)}_{a,b;i,j}\,,\,\pa{E}^{(s)}_{c+1,b;f_1,k}]\,\big]\\
&=\cdots =\pm \Big[\pa{E}^{(1)}_{c,c+1;h,f_1}\,,\,[\pa{E}^{(1)}_{c+1,c+2;f_1,f_2},\ldots,
[\pa{E}^{(r)}_{a,b;i,j}\,,\,\pa{E}^{(s)}_{b-1,b;f_{b-1-c},k}]\,\big]\cdots\Big].
\end{align*}
By (8.9) again, the bracket $[\pa{E}^{(r)}_{a,b;i,j}\,,\,\pa{E}^{(s)}_{b-1,b;f_{b-1-c},k}]=0$.

\item[Case 4.] $a<c<d<b$:\\
Using the same method as in Case 3, we have
\begin{align*}
[\pa{E}^{(r)}_{a,b;i,j}&,\pa{E}^{(s)}_{c,d;h,k}]
=\big[\pa{E}^{(r)}_{a,b;i,j},(-1)^{\pa{c+1}}[\pa{E}^{(1)}_{c,c+1;h,f_1},\pa{E}^{(s)}_{c+1,d;f_1,k}]\,\big]\\
&=(-1)^{\pa{c+1}}\big[\,[\pa{E}^{(r)}_{a,b;i,j}\,,\,\pa{E}^{(1)}_{c,c+1;h,f_1}],\pa{E}^{(s)}_{c+1,d;f_1,k}]\,\big]\\
&\quad\pm(-1)^{\pa{c+1}}\big[\pa{E}^{(1)}_{c,c+1;h,f_1}\,,\,[\pa{E}^{(r)}_{a,b;i,j}\,,\,\pa{E}^{(s)}_{c+1,d;f_1,k}]\,\big]\\
&=0\pm(-1)^{\pa{c+1}}\big[\pa{E}^{(1)}_{c,c+1;h,f_1}\,,\,[\pa{E}^{(r)}_{a,b;i,j}\,,\,\pa{E}^{(s)}_{c+1,d;f_1,k}]\,\big]\\
&=\cdots =\pm \Big[\pa{E}^{(1)}_{c,c+1;h,f_1}\,,\,\big[\pa{E}^{(1)}_{c+1,c+2;f_1,f_2}\,,\ldots,[\pa{E}^{(r)}_{a,b;i,j}\,,\,\pa{E}^{(s)}_{d-1,d;f_{d-1-c},k}]\,\big]\cdots\Big].
\end{align*}
By (8.9) again, the bracket $[\pa{E}^{(r)}_{a,b;i,j}\,,\,\pa{E}^{(s)}_{d-1,d;f_{d-1-c},k}]=0$.

\item[Case 5.] $a<c<b<d$:\\
 We prove this case by induction on $d-b\geq 1$. When $d-b=1$, we have
 \begin{multline*}
  [\pa{E}^{(r)}_{a,b;i,j},\pa{E}^{(s)}_{c,b+1;h,k}]
  =\big[\pa{E}^{(r)}_{a,b;i,j},(-1)^{\pa{b}}[\pa{E}^{(s)}_{c,b;h,j},\pa{E}^{(1)}_{b,b+1;j,k}]\,\big]\\
  =(-1)^{\pa{b}}\big[\,[\pa{E}^{(r)}_{a,b;i,j}\,,\,\pa{E}^{(s)}_{c,b;h,j}]\,,\,\pa{E}^{(1)}_{b,b+1;j,k}\big]
  \pm (-1)^{\pa{b}}\big[\pa{E}^{(s)}_{c,b;h,j}\,,\,[\pa{E}^{(r)}_{a,b;i,j}\,,\,\pa{E}^{(1)}_{b,b+1;j,k}]\,\big].
 \end{multline*}
Now the bracket in the first term is zero by Case 3, and we may rewrite the whole second term as
$\pm[\pa{E}^{(r)}_{a,b+1;i,k},\pa{E}^{(s)}_{c,b;h,j}]$, which is zero by Case 4.
Assume that $d-b>1$, then $d-1>b$. By (8.5), the bracket becomes
\begin{align*}
[&\pa{E}^{(r)}_{a,b;i,j},\pa{E}^{(s)}_{c,d;h,k}]
=\big[\pa{E}^{(r)}_{a,b;i,j}\,,\,(-1)^{\pa{d-1}}[\pa{E}^{(s)}_{c,d-1;h,f}\,,\,\pa{E}^{(1)}_{d-1,d;f,k}]\,\big]\\
&=(-1)^{\pa{d-1}}\big[\,[\pa{E}^{(r)}_{a,b;i,j}\,,\,\pa{E}^{(s)}_{c,d-1;h,f}]\,,\,\pa{E}^{(1)}_{d-1,d;f,k}\big]
\pm\big[\pa{E}^{(s)}_{c,d-1;h,f}\,,\,[\pa{E}^{(r)}_{a,b;i,j}\,,\,\pa{E}^{(1)}_{d-1,d;f,k}]\,\big].
\end{align*}
The bracket in the first term is zero by induction hypothesis, while the bracket in the second term is zero as well by Case 1.

\item[Case 6.] $a=c<b<d$:
 \begin{align*}
 [\pa{E}^{(r)}_{a,b;i,j},\pa{E}^{(s)}_{a,d;h,k}]
 &=\big[\pa{E}^{(r)}_{a,b;i,j}\,,\,(-1)^{\pa{a+1}}[\pa{E}^{(1)}_{a,a+1;h,f}\,,\,\pa{E}^{(s)}_{a+1,d;f,k}]\,\big]\\
 &=(-1)^{\pa{a+1}}\big[\,[\pa{E}^{(r)}_{a,b;i,j}\,,\,\pa{E}^{(1)}_{a,a+1;h,f}]\,,\,\pa{E}^{(s)}_{a+1,d;h,k}\big]\\
 &\quad \pm\big[\pa{E}^{(1)}_{a,a+1;h,f}\,,\,[\pa{E}^{(r)}_{a,b;i,j}\,,\,\pa{E}^{(s)}_{a+1,d;f,k}]\,\big].
 \end{align*}
 Note that $[\pa{E}^{(r)}_{a,b;i,j}\,,\,\pa{E}^{(s)}_{a+1,d;f,k}]=0$ by Case 5. Hence it is enough to show that
 \begin{equation}
 [\pa{E}^{(r)}_{a,b;i,j}\,,\,\pa{E}^{(1)}_{a,a+1;h,f}]=0, \qquad \text{for all}\quad b>a.
 \end{equation}
 We prove (8.14) by induction on $b-a\geq 1$. When $b-a=1$, it follows from (8.2). Now
 assume $b-a>1$. By (8.5), we have
 \begin{align*}
 [\,\pa{E}^{(r)}_{a,b;i,j}\,,\,\pa{E}^{(1)}_{a,a+1;h,f}\,]
 &= \big[\,(-1)^{\pa{b-1}}\,[\,\pa{E}^{(r)}_{a,b-1;i,g}\,,\,\pa{E}^{(1)}_{b-1,b;g,j}\,]\,,\,\pa{E}^{(1)}_{a,a+1;h,f}\big]\\
 &=(-1)^{\pa{b-1}}\,\big[\pa{E}^{(r)}_{a,b-1;i,g}\,,\,[\pa{E}^{(1)}_{b-1,b;g,j}\,,\,\pa{E}^{(1)}_{a,a+1;h,f}]\,\big]\\
 &\quad \pm(-1)^{\pa{b-1}}\,\big[\pa{E}^{(1)}_{b-1,b;g,j}\,,\,[\pa{E}^{(r)}_{a,b-1;i,g}\,,\,\pa{E}^{(1)}_{a,a+1;h,f}]\,\big].
 \end{align*}

 Note that $[\pa{E}^{(r)}_{a,b-1;i,g}\,,\,\pa{E}^{(1)}_{a,a+1;h,f}]=0$ by induction hypothesis. Also by (8.2),
 $[\pa{E}^{(1)}_{b-1,b;g,j}\,,\,\pa{E}^{(1)}_{a,a+1;h,f}]=0$ unless $b-1=a+1$.
 When $b-1=a+1$, (8.14) becomes $[\pa{E}^{(r)}_{a,a+2;i,j}\,,\,\pa{E}^{(1)}_{a,a+1;h,f}]$, which is zero by (8.7).

\item[Case 7.] $a=c<b=d$:\\
 We claim that
 \begin{equation}
 [\pa{E}^{(r)}_{a,b;i,j}\,,\,\pa{E}^{(s)}_{a,b;h,k}]=0.
 \end{equation}
 If $b=a+1$, it follows directly from (8.2). If $b>a+1$, we may expand one term in the bracket of (8.15) by (8.5) as follow.
 \begin{align*}
  [\,\pa{E}^{(r)}_{a,b;i,j}\,,\,\pa{E}^{(s)}_{a,b;h,k}\,]
  &=\big[\,(-1)^{\pa{b-1}}\,[\pa{E}^{(r)}_{a,b-1;i,f}\,,\,\pa{E}^{(1)}_{b-1,b;f,j}]\,,\,\pa{E}^{(s)}_{a,b;h,k}\big]\\
  &=(-1)^{\pa{b-1}}\,\big[\pa{E}^{(r)}_{a,b-1;i,f}\,,\,[\pa{E}^{(1)}_{b-1,b;f,j}\,,\,\pa{E}^{(s)}_{a,b;h,k}]\,\big]\\
  &\quad\pm(-1)^{\pa{b-1}}\,\big[\pa{E}^{(1)}_{b-1,b;f,j}\,,\,[\pa{E}^{(r)}_{a,b-1;i,f}\,,\,\pa{E}^{(s)}_{a,b;h,k}]\,\big].
 \end{align*}
Note that $[\pa{E}^{(1)}_{b-1,b;f,j}\,,\,\pa{E}^{(s)}_{a,b;h,k}]=0$ by Case 3 and $[\pa{E}^{(r)}_{a,b-1;i,f}\,,\,\pa{E}^{(s)}_{a,b;h,k}]=0$ by Case~6. Therefore, we have proved (8.15).
\end{description}
This completes the proof of claim*.
\end{proof}

\begin{proposition}\label{ind2}
The images of the monomials in Proposition~\ref{ind1} under $\Gamma$ are linearly independent.
\end{proposition}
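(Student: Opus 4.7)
The plan is to pass to the associated graded algebra with respect to the loop filtration on $Y_\mu$ and reduce linear independence to the PBW theorem for $U(\gl_{M|N}[t])$, invoked via the isomorphism of Corollary~\ref{Yloop}. I equip each generator with the loop degree
\[
\deg D_{a;i,j}^{(r)}=\deg E_{a,b;i,j}^{(r)}=\deg F_{b,a;i,j}^{(r)}=r-1,
\]
consistently with the filtration on $Y_\mu$ under which $\deg t_{ij}^{(r)}=r-1$.

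The technical heart of the argument is the computation of the leading symbol of each generator. Writing out the block-matrix product $T(u)=F(u)D(u)E(u)$ entry by entry, every contribution to the coefficient of $u^{-r}$ in $T_{a,b;i,j}^{(r)}$ other than the ``pure'' term $D_{a;i,j}^{(r)}$ (when $a=b$), $E_{a,b;i,j}^{(r)}$ (when $a<b$), or $F_{b,a;i,j}^{(r)}$ (when $a>b$) is a product of at least two strictly positive-index coefficients drawn from $D(u)$, $E(u)$, or $F(u)$, and hence lies in $L_{r-2}Y_\mu$. Therefore, modulo $L_{r-2}Y_\mu$,
\[
T_{a,b;i,j}^{(r)}\equiv\begin{cases} D_{a;i,j}^{(r)}, & a=b,\\ E_{a,b;i,j}^{(r)}, & a<b,\\ F_{a,b;i,j}^{(r)}, & a>b.\end{cases}
\]
Applying the isomorphism $gr^LY_\mu\xrightarrow{\sim}U(\gl_{M|N}[t])$ of Corollary~\ref{Yloop}, each of these symbols becomes $(-1)^{\pa{I}}E_{I,J}\,t^{r-1}$, where $I,J\in\{1,\dots,M+N\}$ are the absolute indices recovered from the block positions and entry positions. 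In other words, the leading symbols of the generators appearing in the spanning set of Proposition~\ref{ind1} exhaust, up to sign, a standard basis of the loop superalgebra $\gl_{M|N}[t]$.

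Linear independence then follows by the standard filtration argument. Suppose $\sum_\alpha c_\alpha\,\Gamma(m_\alpha)=0$ in $Y_\mu$ with some $c_\alpha\neq 0$, and let $k$ be the maximum loop degree attained by a surviving monomial. Passing $\sum_{\deg m_\alpha=k}c_\alpha\,\Gamma(m_\alpha)$ to the quotient $L_kY_\mu/L_{k-1}Y_\mu$ yields a vanishing linear combination in $gr_k^LY_\mu$. Via Corollary~\ref{Yloop}, this becomes a vanishing sum of PBW monomials in the standard basis $\{E_{I,J}\,t^{s}\}$ of $\gl_{M|N}[t]$, in the ordering inherited from the fixed ordering on the $D$'s, $E$'s and $F$'s; the PBW theorem for $U(\gl_{M|N}[t])$ then forces each such $c_\alpha$ to vanish, a contradiction. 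The main obstacle I anticipate is not the filtration step itself but the bookkeeping preceding it: one must choose the fixed ordering in Proposition~\ref{ind1} compatibly with a genuine PBW ordering of the basis of $\gl_{M|N}[t]$, and the spanning set must be tacitly restricted so that no odd generator occurs with multiplicity greater than one, matching the restriction in Proposition~\ref{PBWSY}. Once this compatibility is set up, the rest of the argument is purely formal.
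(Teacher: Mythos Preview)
Your proposal is correct and follows essentially the same approach as the paper: both pass to the associated graded algebra for the loop filtration, identify it with $U(\gl_{M|N}[t])$ via Corollary~\ref{Yloop}, compute that the leading symbols of the $D$'s, $E$'s and $F$'s are (up to sign) the standard basis vectors $E_{I,J}t^{r-1}$, and then invoke the PBW theorem for $U(\gl_{M|N}[t])$. The only cosmetic difference is that you extract the leading symbol by expanding $T(u)=F(u)D(u)E(u)$ and discarding lower-degree products, whereas the paper cites the quasideterminant formulas of Proposition~\ref{quasi} for the inverse direction; these are equivalent computations, and your remarks about compatible orderings and the odd-generator restriction are exactly the bookkeeping the paper leaves implicit.
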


\begin{proof}
By Corollary~\ref{Yloop}, we may identify $gr^LY(\gl_{M|N})=gr^LY_{\mu}$ with the loop superalgebra $U(\gl_{M|N}[t])$ via
\[
gr^L_{r-1}t_{ij}^{(r)}\longmapsto (-1)^{\pa{i}}E_{ij}t^{r-1}.
\]
We consider the following composition
\[
gr^L\widehat{Y}_{\mu}^-\otimes gr^L\widehat{Y}_{\mu}^0\otimes gr^L\widehat{Y}_{\mu}^+\twoheadrightarrow gr^L\widehat{Y}_{\mu} \xrightarrow{\Gamma} gr^LY_{\mu}\cong U(\gl_{M|N}[t]).
\]
Let $n_a:=\mu_1+\mu_2+\ldots+\mu_a$ for short. By Proposition~\ref{quasi}, the image of $\pa{E}_{a,b;i,j}^{(r)}$ (resp. $\pa{D}_{a;i,j}^{(r)}$, $\pa{F}_{b,a;i,j}^{(r)}$) under the above composition map is $(-1)^{\pa{n_a+i}}E_{n_a+i, n_b+j}t^{r-1}$ (resp. $(-1)^{\pa{n_a+i}}E_{n_a+i,n_a+j}t^{r-1}$, $(-1)^{\pa{n_b+i}}E_{n_b+i, n_a+j}t^{r-1}$ ). By the PBW theorem for $U(\gl_{M|N}[t])$, the set of all monomials in
\begin{align*}
&\qquad\big\lbrace gr_{r-1}^LD_{a;i,j}^{(r)} \,|\, 1\leq a\leq m+n, \; 1\leq i,j\leq\mu_a, \, r\geq 1 \big\rbrace \\
&\cup\big\lbrace gr_{r-1}^LE_{a,b;i,j}^{(r)} \,|\, 1\leq a<b\leq m+n, \; 1\leq i\leq\mu_a, 1\leq j\leq\mu_b, \, r\geq 1 \big\rbrace\\
&\cup\big\lbrace gr_{r-1}^LF_{b,a;i,j}^{(r)} \,|\, 1\leq a<b\leq m+n, \; 1\leq i\leq\mu_b, 1\leq j\leq\mu_a, \, r\geq 1 \big\rbrace
\end{align*}
taken in certain fixed order forms a basis for $gr^LY_{\mu}$ and hence Proposition~\ref{ind2} follows.
\end{proof}

Let $Y_\mu^0$, $Y_\mu^+$ and $Y_\mu^-$ denote the subalgebras of $Y_{\mu}$ generated by all the $D$'s, $E$'s and $F$'s, respectively.
Along the proofs of Proposition~\ref{ind1} and Proposition~\ref{ind2}, we have found the PBW bases for each of these algebras.

\begin{corollary}
\begin{enumerate}
\item[(1)] The set of monomials in $\{ D_{a;i,j}^{(r)}\}_{1\leq a\leq m+n, 1\leq i,j\leq \mu_a, r\geq 1}$ taken in certain fixed order forms a basis for $Y_\mu^0$.
\item[(2)] The set of monomials in $\{ E_{a,b;i,j}^{(r)}\}_{1\leq a<b\leq m+n, 1\leq i\leq\mu_a,1\leq j\leq\mu_b, r\geq 1}$ taken in certain fixed order forms a basis for $Y_\mu^+$.
\item[(3)] The set of monomials in $\{ F_{b,a;i,j}^{(r)}\}_{1\leq a<b\leq m+n, 1\leq i\leq \mu_b,1\leq i\leq\mu_a, r\geq 1}$ taken in certain fixed order forms a basis for $Y_\mu^-$.
\item[(4)] The set of monomials in the union of the elements listed in (1), (2) and (3) taken in certain fixed order forms a basis for $Y_{\mu}$.
\end{enumerate}
\end{corollary}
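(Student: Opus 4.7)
The plan is to harvest this corollary directly from the machinery already built to prove Theorem~\ref{Pg}. Parts (1)--(4) are each a combination of a \emph{spanning} statement and a \emph{linear independence} statement, and both halves have essentially been established in Section~8.

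\textbf{Spanning.} Proposition~\ref{ind1} and its proof show that the abstract algebra $\widehat Y_\mu$ is spanned by ordered monomials in the $F$'s, then $D$'s, then $E$'s; moreover along the way one proves separately that $\widehat Y_\mu^0$, $\widehat Y_\mu^+$, $\widehat Y_\mu^-$ are each spanned by ordered monomials in their own generating family. For the $D$'s this uses commutativity of $gr^L\widehat Y_\mu^0$ coming from Proposition~\ref{MNdd0}; for the $E$'s, and via the swap map $\zeta_{N|M}$ also for the $F$'s, it uses the supercommutation formula in Claim$^*$ together with the standard filtration/ordering argument. By Theorem~\ref{Pg} the map $\Gamma:\widehat Y_\mu\to Y_\mu$ is an algebra isomorphism that sends each named generator to the element of the same name, so $\Gamma$ restricts to surjections $\widehat Y_\mu^0\twoheadrightarrow Y_\mu^0$ and $\widehat Y_\mu^\pm\twoheadrightarrow Y_\mu^\pm$, and the spanning sets transfer intact to $Y_\mu^0$, $Y_\mu^+$, $Y_\mu^-$, and $Y_\mu$.

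\textbf{Linear independence.} This is supplied by Proposition~\ref{ind2}. Under the loop filtration the composition
\[
gr^L\widehat Y_\mu^-\otimes gr^L\widehat Y_\mu^0\otimes gr^L\widehat Y_\mu^+
\twoheadrightarrow gr^L\widehat Y_\mu \xrightarrow{\;gr^L\Gamma\;} gr^L Y_\mu\cong U(\gl_{M|N}[t])
\]
sends the leading symbols of $D_{a;i,j}^{(r)}$, $E_{a,b;i,j}^{(r)}$, $F_{b,a;i,j}^{(r)}$ to sign-multiples of $E_{n_a+i,\,n_a+j}t^{r-1}$, $E_{n_a+i,\,n_b+j}t^{r-1}$, $E_{n_b+i,\,n_a+j}t^{r-1}$ respectively, where $n_c:=\mu_1+\cdots+\mu_c$. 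As the admissible indices range over their values, these targets exhaust the standard basis of $\gl_{M|N}[t]$ partitioned into its block-diagonal, strictly block-upper-triangular, and strictly block-lower-triangular parts with respect to $\mu$. The PBW theorem for the loop superalgebra then yields linear independence of ordered monomials in each of the three families, and in their union, in $U(\gl_{M|N}[t])$; this lifts to linear independence in $gr^LY_\mu$ and forces linear independence in $Y_\mu$ itself.

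\textbf{Main subtlety.} The only point that merits explicit care is verifying that $\Gamma$ maps each abstract subalgebra \emph{onto} the corresponding subalgebra of $Y_\mu$, i.e.\ $\Gamma(\widehat Y_\mu^0)=Y_\mu^0$ and $\Gamma(\widehat Y_\mu^\pm)=Y_\mu^\pm$. This is immediate since each of these subalgebras is defined to be generated by the correspondingly named elements and $\Gamma$ matches generators by name. Combining the spanning and linear independence statements on each of $Y_\mu^0$, $Y_\mu^+$, $Y_\mu^-$, and finally $Y_\mu$, yields the four PBW bases asserted in the corollary.
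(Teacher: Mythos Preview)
Your proposal is correct and follows essentially the same approach as the paper: the paper's own proof is the single sentence ``Along the proofs of Proposition~\ref{ind1} and Proposition~\ref{ind2}, we have found the PBW bases for each of these algebras,'' and your argument is a careful unpacking of exactly that claim, using the spanning from Proposition~\ref{ind1} (transferred via $\Gamma$) and the linear independence from Proposition~\ref{ind2} via the identification $gr^LY_\mu\cong U(\gl_{M|N}[t])$.
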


\subsection*{Acknowledgements}
Thanks to my PhD supervisor Weiqiang Wang for his patient guidance and helpful comments improving this article.


\end{document}